\documentclass[11pt]{amsart}
\usepackage{amssymb}
\usepackage{graphicx}
\usepackage{xcolor} 
\usepackage{tensor}
\usepackage{fullpage} 
\usepackage{amsmath}
\usepackage{yhmath}
\usepackage{amsthm}
\usepackage{verbatim}
\usepackage{hyperref}
\usepackage{array} 
\usepackage{enumitem}
\setlist[enumerate]{leftmargin=1.8em}
\setlist[itemize]{leftmargin=1.8em}
\usepackage[notref, notcite]{showkeys} 
\usepackage{seqsplit,mathtools}

\definecolor{green}{rgb}{0,0.8,0} 

\newtheorem{theorem}{Theorem}[section]

\newtheorem{lemma}[theorem]{Lemma}

\theoremstyle{definition}

\theoremstyle{remark}
\newtheorem{remark}[theorem]{Remark}

\numberwithin{equation}{section}
\newcommand{\nrm}[1]{\Vert#1\Vert}

\newcommand{\nnrm}[1]{{\vert\kern-0.25ex\vert\kern-0.25ex\vert #1 
		\vert\kern-0.25ex\vert\kern-0.25ex\vert}}

\newcommand{\sgn}{{\mathrm{sgn}}\,}
\newcommand{\supp}{{\mathrm{supp}}\,}

\newcommand{\lap}{\Delta}

\newcommand{\rd}{\partial}
\newcommand{\nb}{\nabla}

\newcommand{\Gmm}{\Gamma}

\newcommand{\omg}{\omega}

\newcommand{\bbR}{\mathbb R}

\definecolor{purple}{rgb}{0.65, 0, 1}
\definecolor{orange}{rgb}{1,.5,0}

\begin{document}

	\title{Well-posedness for Vlasov--Poisson on Low Regularity $H^{s,p}$ Spaces in All Dimensions.}
	
	\author{Sangwook Tae}
	\address{Department of Mathematical Sciences and RIM, Seoul National University, 1 Gwanak-ro, Gwanak-gu, Seoul 08826,  Republic of Korea.}
	\email{swtae00@snu.ac.kr}

	\date{\today}

	\renewcommand{\thefootnote}{\fnsymbol{footnote}}
	\footnotetext{\emph{2020 AMS Mathematics Subject Classification:}  35Q49,  35Q85, 35Q83, 42B37}
	\footnotetext{\emph{Key words: Vlasov-Poisson equation, velocity averaging, low regularity well-posedness } }
	\renewcommand{\thefootnote}{\arabic{footnote}}

	\maketitle

	\begin{abstract}
		We consider the Vlasov--Poisson equation on $\mathbb{R}^d \times \mathbb{R}^d$ for any dimension. For initial distribution $f_{0}$ having compact support in $v$ and belonging to $H^{s,p}(\mathbb{R}^d \times \mathbb{R}^d)$, we prove local well-posedness for $s>\frac{d}{p}-\frac{1}{2p}$ and $p\in[2, \infty)$. We proved this by using two main ingredients, which is the averaging property for the density $\rho$ and the Schauder-Tychonoff fixed point theorem. It seems like this is the first application of the Schauder-Tychonoff fixed point theorem in showing existence of solution for evolutionary PDEs.

	\end{abstract}
	
	\section{Introduction}
	In this paper, we study well-posedness of the Vlasov--Poisson equation in $L^{p}$-based Sobolev spaces with relatively low regularity. The equation describes evolution of a distribution $f(t,x,v) : \bbR \times \bbR^d \times \bbR^d \to \bbR $: 
	\begin{equation}\label{eq:VP} \tag{VP}
		\left\{
		\begin{aligned}
			\rd_{t} f + v \cdot \nb_{x} f - \nb_{x} U \cdot \nb_{v} f = 0, & \\
			\lap_{x} U = \pm\rho = \pm\int_{\bbR^{d}} f dv. & 
		\end{aligned}
		\right.
	\end{equation} 	$U(t,x)$ is explicitly given by \begin{equation*}
	\begin{split}
		U(t,x) = \int_{\bbR^{d}}  \Gmm_{d}(x-y)  \rho(t,y) dy,
	\end{split}
	\end{equation*}
	where $\Gmm_d$ is given by
	\begin{equation*}
		\Gmm_d(x)=
		\begin{cases}
			\frac{1}{d(2-d)\omg_{d}} |x|^{2-d}, \qquad \, &d \neq 2 \\
			\frac{1}{2\pi}\log(\vert x\vert), \qquad \qquad  &d=2
		\end{cases}
	\end{equation*}
	Here, $\omg_{d}$ is the volume of the unit $d$-ball. Moreover, although we consider $+$ sign for the potential in \eqref{eq:VP}, the result is valid for both signs: the positive sign corresponds to collision-less plasma dynamics and the negative to astrophysical situations, most notably galaxy dynamics (\cite{Glassey96,Horst,Horst2}).

	 Our result gives local well-posedness of solutions to \eqref{eq:VP} in $H^{s,p}$ spaces not in the classical regime, which holds for all $d$. In this paper, we define $H^{s, p}(\mathbb{R}^d)=\{ f\in L^p(\mathbb{R}^d): \mathcal{F}^{-1}\langle \xi \rangle^s \mathcal{F}f\in L^p(\mathbb{R}^d)  \}$ where $\mathcal{F}$ denotes the Fourier transform.
	
	\begin{theorem}\label{thm:main}
		The Vlasov--Poisson equation \eqref{eq:VP} is locally well-posed in $(H^{s,p} \cap L^{1})(\bbR^{d} \times \bbR^{d})$ with compact support in $v$ for $s> \frac{d}{p}-\frac{1}{2p}$ and $p\geq 2$. That is, with initial data $f_{0} \in (H^{s,p} \cap L^{1})(\bbR^{d} \times \bbR^{d})$ satisfying $f_{0}(x,v)  = 0$ whenever $|v|>Q/2$ for some $Q>0$, there exist $T>0$ depending on $\nrm{f_{0}}_{H^{s,p}\cap L^{1}}, Q$, and a unique solution $f \in C([-T,T]; (H^{s,p} \cap L^{1})(\bbR^{d} \times \bbR^{d}))$ to \eqref{eq:VP} with $f(t=0) = f_{0}$ satisfying $f(t,x,v) = 0$ whenever $|v| > Q$. 
	\end{theorem}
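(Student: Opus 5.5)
The plan is to construct the solution as a fixed point of the map $\Phi$ that sends a force field to the density of the linear transport solution it generates, and then to prove uniqueness by a separate Lagrangian stability estimate. The key point is regularity counting. In the classical regime one needs $f\in H^{s,p}$ with $s>2d/p$ in phase space. That is enough for $\rho\in H^{s,p}_x$ with $s>d/p$, hence $\nabla_x^2 U\in L^\infty$, which gives Lipschitz characteristics. Here we only assume $s>\frac{d}{p}-\frac{1}{2p}$. The missing regularity will come from velocity averaging: for a solution of $\partial_t f+v\cdot\nabla_x f=-E\cdot\nabla_v f$ with compact support in $v$, the average $\rho=\int f\,dv$ gains derivatives in time-averaged form. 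Concretely, we aim for an estimate of the form
\begin{equation*}
\int_{-T}^{T}\nrm{\nabla_x^2 U(t)}_{L^\infty}\,dt\lesssim C\big(T,Q,\nrm{f}_{L^\infty_tH^{s,p}\cap L^1},\nrm{E}\big),
\end{equation*}
with a gain of $\frac{1}{2p}$ derivative, obtained from an $L^p$-averaging lemma interpolated between $p=2$ (gain $\frac12$) and $p=\infty$ (gain $0$). Since $\nabla_x U$ is one derivative smoother than $\rho$, the Sobolev embedding $H^{s+1/(2p),p}_x\subset C^{1}$ after the two-derivative loss of $\nabla^2\Gamma$ closes exactly at $s+\frac1{2p}>\frac dp$.

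\textbf{Step 1 (linear problem).} Fix $T$ and a convex set $K$ of fields $E(t,x)$ with $\int_{-T}^T\nrm{\nabla E}_{L^\infty}dt\le M$ and a suitable $L^\infty_tH^{\sigma,p}$-type bound. For $E\in K$ we solve $\partial_t f+v\cdot\nabla_xf-E\cdot\nabla_vf=0$ with $f(0)=f_0$ along characteristics. This gives $L^1$ and $L^p$ conservation and preservation of $v$-support in $|v|\le Q$ for small $T$. It also gives the commutator/Kato–Ponce estimate
\begin{equation*}
\nrm{f(t)}_{H^{s,p}}\le \nrm{f_0}_{H^{s,p}}\exp\Big(C\int_0^t\big(1+\nrm{\nabla E}_{L^\infty}+\nrm{E}_{H^{s,p}}\big)\Big).
\end{equation*}

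\textbf{Step 2 (averaging).} We write the equation as $(\partial_t+v\cdot\nabla_x)f=-\nabla_v\cdot(Ef)$ and apply the $L^p$ averaging lemma to $\rho$. This yields $\rho\in L^p_tH^{s+1/(2p)-\epsilon,p}_x$, and therefore $\nabla_x U\in L^1_t W^{1,\infty}$ with a bound depending only on the data and on $M$. For $T$ small, $\Phi(E):=\nabla\Gamma_d*\rho$ then maps $K$ into itself.

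\textbf{Step 3 (Schauder–Tychonoff).} We equip $K$ with a weak or locally convex topology, for instance $L^2_{loc}$ in $(t,x)$. Then $K$ is compact, by the bounds plus Aubin–Lions in time via the equation for $\rho$, and $\Phi$ is continuous: stability of characteristics under $L^1_tW^{1,\infty}$ fields, combined with a compactness/uniqueness argument for the linear transport problem, gives convergence of the densities. The fixed point gives a solution. Continuity $f\in C([-T,T];H^{s,p}\cap L^1)$ follows by a standard Bona–Smith-type regularization.

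\textbf{Step 4 (uniqueness).} Given two solutions, we compare their characteristics. Both have $\nabla E_i\in L^1_tL^\infty$, so the Loeper-type or $L^1$ Lagrangian difference estimate closes by Gronwall.

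The main obstacle is Step 2. The forcing term $\nabla_v\cdot(Ef)$ costs a $v$-derivative, and the gain must be reconciled with the limited regularity of $f$ itself. The $L^p$ averaging gain is $\frac1{2p}$ only after interpolation, and the $\epsilon$ losses have to be absorbed by the strict inequality on $s$. If a direct averaging lemma is insufficient, the first fallback is to average the free-transport Duhamel formula along the perturbed flow, treating $E$ as Lipschitz in $L^1_t$.
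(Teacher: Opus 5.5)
Your overall architecture matches the paper's. You take a Schauder--Tychonoff fixed point on the density/field rather than on $f$, use the $\frac{1}{2p}$ averaging gain, prove continuity of the map through stability of characteristics, and get uniqueness from Loeper's argument. The gap is in Step~1, which is also where the paper's real work lies. The bound $\Vert f(t)\Vert_{H^{s,p}}\le\Vert f_0\Vert_{H^{s,p}}\exp\big(C\int_0^t(1+\Vert\nabla E\Vert_{L^\infty}+\Vert E\Vert_{H^{s,p}})\,d\tau\big)$ is false for part of the range of $s$ the theorem covers.

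The culprit is the high--low piece $\Lambda^s_{x,v}(T_{\nabla_v f}E)$ of $[\Lambda^s_{x,v},E\cdot\nabla_v]f$. To pair with $\Lambda^sE\in L^q_x$, the low-frequency part of $\nabla_v f$ must lie in a mixed norm $L^r_xL^p_v$ with $r>p$, and $\Vert f\Vert_{H^{s,p}_{x,v}}$ does not control that. Concretely, take $f=2^{k(\frac{2d}{p}-s)}\chi(2^kx)\chi(2^kv)$ and $E=e_1\cos(2^jx_1)\chi(2^kx)$ with $j\gg k\gg1$. Then $\Vert f\Vert_{H^{s,p}}\sim1$ and $\Vert[\Lambda^s_{x,v},E\cdot\nabla_v]f\Vert_{L^p}\gtrsim2^{js+k(1-s)}$, while $\Vert\nabla E\Vert_{L^\infty}+\Vert E\Vert_{H^{s,p}}\sim2^j+2^{js-kd/p}$. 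The ratio is unbounded whenever $1<s<1+\frac dp$, and this interval meets $s>\frac dp-\frac1{2p}$ for every $d,p$.

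What actually controls this term (the paper's $\mathcal{T}_3$ in Section~\ref{Para}) is the averaging-gained norm $\Vert E\Vert_{H^{\sigma,p+\epsilon'}}$, with $\sigma=s+1+\frac1{2p}-\epsilon-\frac{\epsilon'd}{p(p+\epsilon')}$. So the main obstacle is this paraproduct commutator estimate, not the averaging lemma, which is quoted from DiPerna--Lions--Meyer.

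This has structural consequences your plan does not account for.
\begin{itemize}
\item The gained norm is only available integrably in time. So $K$ must carry it in $L^q_t$ with $q<\infty$; the paper's $V$ uses $\rho\in L^{1+\epsilon p}_tH^{s+\frac1{2p}-\epsilon,p}$ and $\nabla_x\Delta_x^{-1}\rho\in L^p_tH^{s',p+\epsilon'}$. An $L^\infty_tH^{\sigma,p}$ bound plus $\int\Vert\nabla E\Vert_{L^\infty}$ is not enough.
\item Step~1 must then assume this gained norm of $E\in K$ rather than derive it. The self-map closes only because a positive power of $T$ appears when that norm enters the Gronwall exponent.
\item Step~2 must be a time-localized averaging estimate on $[0,T]$. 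The paper uses the weight $t^\alpha(T-t)^\alpha$ plus interpolation, which is why it lands in $L^{1+\epsilon p}_t$.
\item $K$ itself must contain a time-derivative bound, as in the paper's $\partial_t\nabla_x\Delta_x^{-1}\rho\in L^{p+\epsilon'}_{t,x}$. This bound is checked on the image via $\partial_t\rho=-\nabla_x\cdot\int vf\,dv$. Aubin--Lions cannot be run ``via the equation'' on arbitrary elements of $K$.
\end{itemize}
Your $L^2_{loc}$ topology, in place of the paper's weak topology on the reflexive space $V$, is a legitimate variant once $K$ is defined by norms that are lower semicontinuous for it.

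Two smaller points. First, with the source $\nabla_v\cdot(Ef)$ the $L^2$ gain is $\frac14$, not $\frac12$; interpolating that with $p=\infty$ is what yields $\frac1{2p}$. Second, the Bona--Smith step presupposes a difference estimate in a weaker norm that you do not supply. The paper gets continuity in time more cheaply: weak continuity from $\partial_tf\in L^\infty_tH^{-1,p}$, plus continuity of the norm obtained by restarting the a priori bound at each time and invoking uniqueness.
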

	
	\begin{remark}
		We give a few remarks regarding the statement of the theorem. 
		
		\begin{itemize}
			\item The phrase ``not in the classical regime" means that the solution $f$ does not have to be continuous (or even in $L^\infty$ or $L^q$ for sufficiently large $q$). This was possible due to averaging effect for the density $\rho$. Indeed, the averaging Lemma tells us that $\rho$ gains $\frac{1}{2p}$ more regularity compared to $f$. This enables us to define the flow map of the characteristic vector field, which was the key to success.
			\item The compact support assumption in $v$ can be relaxed to the
			case when the solution decays sufficiently fast (e.g., exponential decay) in $v$.
			\item The main achievement is that the dimensional restriction is relaxed compared to the result in \cite{Prev}. This was possible by different approaches on constructing the solution.
		\end{itemize}
		
	\end{remark}

	\subsection{Organization of the paper}

	The rest of the paper is organized as follows. First, we provide some previous results on well posedness for Vlasov-Poisson equation and also motivations in our proofs. Then, we prove for $d\geq 2$, the existence in \S \ref{Existence} and prove the uniqueness and continuity in \S \ref{unique}. In \S \ref{1D}, we clarify the necessary modifications for the proof of the case $d=1$. Finally, in \S \ref{Para}, we present the detailed steps of a priori estimates using paradifferential calculus.  

	\bigskip

	\noindent \textbf{Acknowledgments}. The author would like to express his gratitude and thanks to Professor In-Jee Jeong for his guidance throughout this project.

	\subsection{Previous works on well-posedness}
		
	We now review various works on well-posedness of \eqref{eq:VP}  on the whole space case $\mathbb{R}^d \times \mathbb{R}^d$. A more extensive list of references, as well as history, can be found in the textbooks \cite{Glassey96,Rein07,BGP}. 
		
	First, Horst \cite{Horst} proves local well-posedness for $C^1$ data in the whole space  with any $d\ge 1$, with global well-posedness for dimensions 1 and 2. Global well-posedness for classical solutions in $d = 3$ which is much more harder, was obtained in \cite{K.P.,P.L.,J.S.}. For $d \ge 4$, it was shown that classical solutions can blow up in finite time (\cite{Horst2}). Recently, Chen and He (\cite{W.F.S._1,W.F.S._2,Besov}) give local well-posedness in (weighted in $v$) Sobolev, Bessel potential spaces, and Besov spaces. Their Besov local well-posedness works in $B^{s}_{p, r}(\mathbb{R}^d)$, where $s>\frac{d}{p}+1$, $1<p<\frac{d}{2}$, $d\geq 3$, and $1<r<\infty$. 
		
	On the other hand, it has been well-known since \cite{Ars} that global existence of weak solutions holds with little regularity assumptions on the data, see also \cite{Lagrangian, C.P.,P.L.} for the theory of weak solutions.  Regarding the question of uniqueness for weak solutions, Loeper \cite{Loeper} gave a powerful criterion which states that boundedness of $\rho$ in spacetime garantees uniqueness. This uniqueness criterion was strengthened in several directions. Among those, \cite{BV} showed that for $d = 1$, BV regularity ensures well-posedness. Moreover, \cite{Uniqueness,Yudovich} extended the uniqueness criterion to $\rho$ belonging to the so-called Yudovich spaces, which allows the $L^p$ norm of $\rho$ to grow at some rate as $p\to\infty$.
	
	Finally, in \cite{Prev}, the well posedness in $H^s$ for $s>\frac{d}{2}-\frac{1}{4}$ was proven for $d\geq 3$. This is the first result for proving well posedness outside the classical regime, for best of our knowledge. This result can be seen as a special case for our result, which generalized to $H^{s, p}$ for $p\geq 2$ and also extended to the case $d=1, 2$. Despite this fact, we emphasize that the strategy of the proof differs significantly, which is one of the reason for success to covering the case $d=1, 2$. 
	
	\subsection{Difficulties}
	
	\subsubsection{Issues in low dimensions $d = 1, 2$}
	
	We have mentioned above that the $H^s$ well posedness was accomplished for the case $d\geq 3$ in \cite{Prev}. The reason for the failure in the case $d=1, 2$ was twofold. First, when performing a priori estimates, the elementary analysis on the Fourier side required the following bound $$\Vert \vert\nabla_x\vert^{s-1}\vert\nabla_v\vert  f \Vert_{L^2}\leq C\Vert f \Vert_{H^s_{x,v}}, $$ which is only true for $s\geq1$. This fact became an obstacle for proving the case $d=1, 2$ because of the condition $s>\frac{d}{2}-\frac{1}{4}$. Second, the Sobolev embedding $\dot{W}^{1, \frac{2d}{d+2}}\hookrightarrow L^2$ was strongly used to control the $L^2$ norm of the electric field, which does not hold for $d=1, 2$. Actually, for the case $d=1$, the situation was fundamentally severe since the electric field actually has no decay in space, as one can observe.
	
	In this work, we have circumvented these difficulties by using more refined paraproduct techniques. It allowed us to obtain a similar a priori estimates without using the problematic inequality $\Vert \vert\nabla_x\vert^{s-1}\vert\nabla_v\vert  f \Vert_{L^2}\leq C\Vert f \Vert_{H^s_{x,v}}$ and at the same time, allowed us to generalize the result for arbitrary $p\geq 2$. This method also opened up the path to overcome the second issue by the following procedure. For the case $d=2$, the only problem was the invalidity of the Sobolev embedding $\dot{W}^{1, 1}\hookrightarrow L^2$. It corresponds to the endpoint case, which is the reason for the failure. We slightly adujusted the integrability index to overcome this problem. 
	
	For the case $d=1$, as mentioned before, the absence of decay for the electric field was problematic. Fortunately, the electric field was bounded in space, which allowed us to adjust the paraproduct estimates to handle this case also. It would have been impossible if we have just used elementary arguments on the Fourier side, since the Fourier transform of electric field would have not belong to a function, but rather a distribution.

	\subsubsection{Motivations for applying fixed point theorem in the proof}
	
	As said earlier, the key mechanism for the well posedness is the averaging effect for the density field $\rho$, which makes the flow map of the characteristic vector field well defined, thus allowing us to view the solution as a composition of the initial data and the inverse flow map. Based on this philosophy, we first tried to construct the solution in the following iteration procedure.
	
	Given the initial ansatz $f_1$ defined on some time interval $[0, T]$, we calculate the corresponding density field $\rho_1$, generate the characteristic vector field to obtain the flow map $\Phi_1$, and define $f_2=f_0\circ\Phi_1^{-1}$ and view this as a new ansatz to iterate. This method seems natural, but some difficulties occur. First, starting with arbitrary $f_1$ will not work since there is no guarantee that the corresponding $\rho_1$ has enough regularity in general. Also, it seems hard to get the precise estimates for the flow map in terms of the initial ansatz. To go around this difficulties, we chose an alternative route.
	
	The observation for this route is the following. First, notice that the flow is completely determined by $\rho$, which does not require full information about $f$. Once we obtain the flow, we can recover $f$ by composing initial $f_0$ to the inverse of the flow. Therefore, we see that we can iterate with $\rho$ instead of $f$. The great advantage of this method is that we can encode the gained regularity of $\rho$ directly, which was extremely hard when iterating with $f$. 
	
	However, there is one more difficulty. The estimates we have in our hand is the bound of certain quantities, so when we iterate to obtain sequence of $\rho$'s, what we obtain is the convergence through subsequence, which comes from compactness of such sequence. But we immediately see that the limiting value obtained has no guarantee to satisfy the original equation. This is because of the following reason. $\rho$ obtained in certain step was obtained by flow map generated by $\rho$ in the previous step, so such information is completely lost when passing through subsequence. Therefore, we need the whole sequence to converge, not through the subsequence.
	
	To circumvent this problem, we decided to use some fixed point theorem instead of iteration. One of the ones used in proving existence is Banach fixed point theorem, which required the contraction property of mapping. In our case, such estimate was not available. We needed the theorem with weaker requirements. The other alternative route was to use the Schauder fixed point theorem, which requires the mapping to be continuous and the domain and range to be compact, convex in Banach space. But showing the compactness in Banach space topology is extremely hard. The compactness is usually easy to show in the weak topology for separable Banach space, due to Banach Alaoglu theorem. In our case, since we have boundedness property in hand, this is the case, actually. However, this topology is not Banach space topology, but only locally convex topological vector space. But most of the fixed point theorem is for Banach space, so these were not available for our case.
	
	Fortunately, there was the fixed point that suites for our case, which was Schauder-Tychonoff fixed point theorem. The statement is given in the next section, and as one can see, the requirement is the continuity for the mapping and the convex and compactness of domain and range. The latter condition is relatively easy to satisfy, but the continuity was not trivial. However, we managed to show the continuity, which enabled us to construct the solution explicitly. 
	
	One final thing we want to mention is that to the best of our knowledge, this is the first case to construct the solution for evolutionary PDE using Schauder-Tychonoff fixed point theorem. We think that the reason for this is due to the difficulty for proving the continuity in weak topology. Therefore, we claim that this paper showed some new strategy for constructing the solution for evolutionary PDE. 
	\section{Existence}\label{Existence}
	In this section, we are to construct the solution. To do this, we use fixed point method. The important fact is that we are finding a fixed point of $\rho$ instead of $f$.\\
	The fixed point theorem we will use is the following Schauder-Tychonoff fixed-point Theorem\cite{Rudin}:
	\begin{theorem}
		\label{FP}
		Let $X$ be a locally convex topological vector space and let $K$ be a nonempty, compact, convex subset of $X$. Then, any continuous mapping $F:K\to K$ has a fixed point.
	\end{theorem}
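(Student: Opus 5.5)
We reduce the statement to Brouwer's fixed point theorem by building approximate fixed points with finite-dimensional "Schauder projections", then passing to a limit using compactness of $K$. As in \cite{Rudin}, we take $X$ Hausdorff. The topology of $X$ then has a base at $0$ of convex, balanced, open neighbourhoods $V$. The goal of the first step is the following claim: for every such $V$ there is $x_V\in K$ with $F(x_V)-x_V\in V$.

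\textbf{Step 1: approximate fixed points.} Fix $V$ and let $\mu_V$ be its Minkowski functional. It is a continuous seminorm-like functional with $V=\{\mu_V<1\}$. By compactness, $K\subset\bigcup_{i=1}^n (x_i+V)$ for some $x_i\in K$. Set $\varphi_i(x)=\max(0,1-\mu_V(x-x_i))$. These functions are continuous, and $\sum_i\varphi_i>0$ on $K$. Define
\[
P(x)=\frac{\sum_i \varphi_i(x)\,x_i}{\sum_i\varphi_i(x)}, \qquad x\in K .
\]
Then $P$ is continuous and maps $K$ into $K_n:=\mathrm{conv}\{x_1,\dots,x_n\}$, which lies in $K$ because $K$ is convex. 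Moreover, $P(x)-x$ is a convex combination of those $x_i-x$ with $\varphi_i(x)>0$. Each such $x_i-x$ lies in $V$ (using $V=-V$), so $P(x)-x\in V$ by convexity of $V$.

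\textbf{Step 2: Brouwer on $K_n$.} The set $K_n$ is a compact convex subset of the finite-dimensional span $E$ of the $x_i$. Since $X$ is Hausdorff, $E$ carries the Euclidean topology. A compact convex subset of $\mathbb R^m$ is a retract of a closed ball, for instance via the nearest-point projection, so it has the fixed point property. The map $P\circ F:K_n\to K_n$ is continuous. Brouwer's theorem therefore gives $y\in K_n$ with $y=P(F(y))$. Then $F(y)-y=P(F(y))-F(y)\in V$ by Step 1 applied at the point $F(y)\in K$. Hence $x_V:=y$ proves the claim.

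\textbf{Step 3: passing to the limit.} Direct the neighbourhoods $V$ by reverse inclusion. By compactness, the net $(x_V)$ has a subnet converging to some $x\in K$. Suppose $F(x)\neq x$. Choose a convex balanced open $W$ with $F(x)-x\notin W+W+W$, which is possible by the Hausdorff property. By continuity of $F$ at $x$, we have $F(x_V)-F(x)\in W$ and $x_V-x\in W$ far along the subnet. Since the subnet is cofinal, we may also take $V\subset W$ there, so that $F(x_V)-x_V\in W$. Summing the three memberships gives $F(x)-x\in W+W+W$, a contradiction. Thus $F(x)=x$.

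The main obstacle is Step 1. The partition of unity has to be built from the convex neighbourhood $V$ itself, and not from a metric, since $X$ need not be metrizable. This is precisely where local convexity is used, in concluding that $P(x)-x\in V$. The remaining steps are Brouwer's theorem and a routine compactness argument.
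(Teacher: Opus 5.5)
Your proof is correct and is essentially the standard Schauder--Tychonoff argument in Rudin's \emph{Functional Analysis}, which the paper cites rather than proves: a Schauder projection built from the Minkowski functional of a convex balanced neighbourhood, Brouwer's theorem on the finite-dimensional convex hull, and compactness to pass from approximate to exact fixed points. Your Hausdorff convention is genuinely needed and holds for the weak topology the paper uses; the only slip is a harmless sign in Step 2, where $F(y)-y=F(y)-P(F(y))$ is the negative of what you wrote and still lies in $V$ because $V=-V$.
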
 
	This theorem has the power that it applies to any locally convex topological vector spaces and the compactness of the map is not required. However, it has some limitations that it is even hard to prove continuity and compactness in general topological vector spaces. Despite this fact, we have managed to derive the continuity of the mapping defined below.\\
	
	We construct the settings that will be applied to Schauder-Tychonoff fixed-point Theorem. We first let
	\begin{flalign*}
		V=&\{\rho:[0, T]\times \mathbb{R}^d\to \mathbb{R}\,\vert\, \rho\in L^{1+\epsilon p}([0, T], H^{s+\frac{1}{2p}-\epsilon, p}(\mathbb{R}^d)),\;\nabla_x\Delta_x^{-1}\rho\in L^{p}([0, T], H^{s', p+\epsilon'}(\mathbb{R}^d))  \\&\;\text{and}\;  \partial_t \nabla_x\Delta_x^{-1}\rho\in L^{p+\epsilon'}([0, T], L^{p+\epsilon'}(\mathbb{R}^d))\}.
	\end{flalign*} 
	Here, $\epsilon, \epsilon'>0$ is sufficiently small and  $s'=s+\frac{1}{2p}-\epsilon-\frac{\epsilon'd}{p(p+\epsilon')}$ . This space is a reflexive Banach Space. We set $X=V$ with the weak topology. This is the locally convex topological vector space that we will use. Now, for the compact subset, we set
	$$K=\overline{B_R(0)}\cap \left\{\rho\in V: \rho(0, x)=\int_{\mathbb{R}^d}f_0(x, v)dv \right\} ,$$ where $\overline{B_R(0)}$ is a closed ball of $V$ with radius $R$. By Banach-Alaoglu theorem, $\overline{B_R(0)}$ is compact in the weak topology. The set $\left\{\rho\in V: \rho(0, x)=\int_{\mathbb{R}^d}f_0(x, v)dv \right\}$ is a closed subset of $V$ in the strong topology by the continuity of the trace operator. Since it is convex, by Mazur's theorem, it is also weakly closed. The only concern is the non-emptiness of this set. We shall discuss this issue a bit later since it requires a priori estimates obtained below. Being a closed subset of a compact set, $K$ is also compact. It is convex since it is an intersection of two compact sets. It can be made nonempty by taking $R$ sufficiently large. Finally, the functional we will use in this setting is the following:
	\begin{flalign*}
		G[\rho](t, x):=\int_{\mathbb{R}^d} (f_0\circ (\Phi_t^\rho)^{-1})(t, x, v)dv
	\end{flalign*}
	where $f_0\in H^{s, p}(\mathbb{R}^d\times\mathbb{R}^d)$ is the initial condition and $\Phi^\rho$ is the flow map of the nonautonomous vector field $(v, -\nabla_x\Delta_x^{-1}\rho(x, t))$, which will be shown to be well defined. We will adjust $T$ (but not $R$) to show that $G:K\to K$. So, we have two steps: first, we should show that $G$ is well defined and maps $K$ into itself and second, we should show that $G$ is continuous in this topology.
	
	Before starting the proof, let us remark that we are always denoting $C$ to be a constant independent of $R$ and $T$, but may depend on other factors. 
	\subsection{Well Definedness and Self Mapping Property of $G$}
	Let $\rho\in K$. First, we consider the well definedness of the flow. This is actually a consequence of the Sobolev embedding theorem. Recall that our vector field is $(v, -\nabla_x\Delta_x^{-1}\rho(x, t))$. By sobolev embedding, we have $$H^{s+\frac{1}{2p}-\epsilon, p}(\mathbb{R}^d)\hookrightarrow H^{s', p+\epsilon'}(\mathbb{R}^d)$$
	where $s'=s+\frac{1}{2p}-\epsilon-\frac{\epsilon'd}{p(p+\epsilon')}$, which implies that
	\begin{flalign*}
		\rho\in L^1([0, T], H^{s', p+\epsilon'}(\mathbb{R}^d)).
	\end{flalign*}
	Also, we have that $$\nabla_x\Delta_x^{-1}\rho\in L^1([0, T], L^{p+\epsilon'}(\mathbb{R}^d)),$$
	which can be seen as follows.\\
	
	Since $f_0(x,v)=0$ for $\vert v\vert< Q$, we have that 
	\begin{flalign*}
		\Vert \rho(0) \Vert_{L^p_x}\leq \Vert Q^{d-\frac{d}{p}}\Vert f_0(x, \cdot)\Vert_{L^p_v}  \Vert_{L^p_x}\leq Q^{d-\frac{d}{p}}\Vert f_0 \Vert_{L^p_{x,v}}. 
	\end{flalign*}
	Also, we have $$\Vert \rho(0) \Vert_{L^1_x}\leq \Vert f_0\Vert_{L^1_{x,v}}$$
	This implies that $$\Vert \rho(0) \Vert_{L^{p'}_x}\leq \Vert \rho(0) \Vert_{L^p_x}^\frac{1-\frac{1}{p'}}{1-\frac{1}{p}} \Vert \rho(0) \Vert_{L^1_x}^{\frac{\frac{1}{p}+\frac{1}{p'}}{1-\frac{1}{p}}}\leq \left(  Q^{d-\frac{d}{p}}\Vert f_0 \Vert_{L^p_{x,v}} \right)^\frac{1-\frac{1}{p'}}{1-\frac{1}{p}}\Vert f_0 \Vert_{L^1_{x,v}}^{\frac{\frac{1}{p}+\frac{1}{p'}}{1-\frac{1}{p}}}=C.$$
	where $p'=\frac{(p+\epsilon')d}{p+\epsilon'+d}$ and $C$ is independent of $T$ and $R$. Now by the Sobolev embedding
	\begin{flalign*}
		L^{p'}\hookrightarrow \dot{H}^{-1, p+\epsilon'}(\mathbb{R}^d),
	\end{flalign*}
	we have
	\begin{flalign*}
		\Vert \nabla_x\Delta_x^{-1}\rho(0) \Vert_{L^{p+\epsilon'}_x} \leq C.
	\end{flalign*}
	This and the assumption $\partial_t \nabla_x\Delta_x^{-1}\rho\in L^{p+\epsilon'}([0, T], L^{p+\epsilon'}(\mathbb{R}^d))$ implies that we have
	\begin{flalign*}
		\nabla_x\Delta_x^{-1}\rho\in C([0, T], L^{p+\epsilon'}(\mathbb{R}^d)).	
	\end{flalign*}
	and indeed, we have
	\begin{flalign*}
		\Vert \nabla_x\Delta_x^{-1}\rho(t) \Vert_{L_x^{p+\epsilon'}} \leq C+CRT^{1-\frac{1}{p+\epsilon'}}
	\end{flalign*}
	We therefore have that $$\nabla_x\Delta_x^{-1}\rho\in L^1([0, T], H^{s'+1, p+\epsilon'}(\mathbb{R}^d)),$$ 
	which implies that 
	\begin{flalign*}
		\nabla_x\Delta_x^{-1}\rho\in L^1([0, T], C^1(\mathbb{R}^d))
	\end{flalign*}
	by the Sobolev embedding 
	\begin{flalign*}
		H^{s'+1, p+\epsilon'}(\mathbb{R}^d)\hookrightarrow C^1(\mathbb{R}^d).
	\end{flalign*}
	Thus, by the standard Cauchy-Lipschitz theory, we see that the flow map is well defined. Let us write $\Phi^\rho(t, x, v)=(X^\rho(t, x, v), V^\rho(t, x, v))$. Then, the following holds:
	\begin{flalign*}
		V^\rho(t, x, v)=v-\int_{0}^t\nabla_x\Delta_x^{-1}\rho(s,X^\rho(s,x,v))ds.
	\end{flalign*}
	Thus, by the triangle inequality, we have
	\begin{flalign*}
		\left\vert V^\rho(t, x, v)\right\vert&=\vert v\vert +\int_{0}^t \left\Vert\nabla_x\Delta_x^{-1}\rho(s)\right\Vert_{C(\mathbb{R}^d)} ds \leq \vert v\vert +C \int_{0}^t \left\Vert\nabla_x\Delta_x^{-1}\rho(s)\right\Vert_{H^{s',p+\epsilon'}(\mathbb{R}^d)} ds\\
		&\leq \vert v\vert +CT^{\frac{p\epsilon}{1+p\epsilon}}\Vert \nabla_x\Delta_x^{-1}\rho \Vert_{L^{1+p\epsilon}([0, T], H^{s',p+\epsilon'}(\mathbb{R}^d))}\\
		&\leq \vert v\vert +CT^\frac{p\epsilon}{1+p\epsilon}\Vert \rho \Vert_{L^{1+p\epsilon}([0, T], H^{s',p+\epsilon'}(\mathbb{R}^d))}+CT^\frac{p\epsilon}{1+p\epsilon}\Vert \nabla_x\Delta_x^{-1}\rho \Vert_{L^{1+p\epsilon}([0, T], L^{p+\epsilon'}(\mathbb{R}^d))}\\
		&\leq \vert v\vert +CT^{\frac{p\epsilon}{1+p\epsilon}}\Vert \rho \Vert_{L^{1+p\epsilon}([0, T], H^{s+\frac{1}{2p}-\epsilon,p}(\mathbb{R}^d))}+CT\Vert \nabla_x\Delta_x^{-1}\rho \Vert_{L^\infty([0, T], L^{p+\epsilon'}(\mathbb{R}^d))}\\
		&\leq \vert v\vert +CRT^{\frac{p\epsilon}{1+p\epsilon}}+CT(1+T^{1-\frac{1}{p+\epsilon'}}R).
	\end{flalign*}
	Thus, if we initially have $\vert v\vert \leq \frac{Q}{2}$ and choose $T>0$ sufficiently small, then we have that $\vert V^\rho(t, x, v) \vert\leq Q $. Therefore, we see that the function 
	$$f^\rho(t, x, v)=(f_0\circ (\Phi_t^\rho)^{-1})(x, v)$$ is well defined and has the property that $f^\rho=0$ for $\vert v\vert \geq Q$. This property is crucial for applying the averaging lemma later.
	
	Next, we need to show that $f^\rho$ is in $L^\infty([0, T], H^{s,p}(\mathbb{R}^d\times\mathbb{R}^d))$. Unfortunately, this cannot be achieved easily, since our flow map is guaranteed only to be in $C^1$, not in $C^{\lfloor s\rfloor+1}$. However, we have the information that $\Phi^\rho$ is a flow map of $L_1^t H^{s',p+\epsilon'}$ vector field, and this is the key to success. To exploit this fact, we have to go back to the transport equation that is satisfied by the function $f^\rho$:
	\begin{flalign*}
		\partial_t f^\rho+v\cdot \nabla_x f^\rho-\nabla_x\Delta_x^{-1}\rho\cdot \nabla_v f^\rho=0
	\end{flalign*} 
	To estimate the $H^{s, p}$ norm, we have to look into the equation governing the $\Lambda_{x, v}^sf^\rho$:
	\begin{flalign*}
		\partial_t \Lambda_{x,v}^sf^\rho+\Lambda_{x,v}^s[(v,-\nabla_x\Delta_x^{-1}\rho)\cdot\nabla_{x,v}f^\rho]=0.
	\end{flalign*}
	We transform this equation into the commutator form:
	\begin{flalign*}
		\partial_t \Lambda_{x,v}^sf^\rho+(v,-\nabla_x\Delta_x^{-1}\rho)\cdot\nabla_{x,v}(\Lambda_{x,v}f^\rho)=-[\Lambda_{x,v}^s,(v,-\nabla_x\Delta_x^{-1}\rho)\cdot \nabla_{x,v}]f^\rho,
	\end{flalign*}
	where $\Lambda_{x,v}=(1-\Delta_{x,v})^\frac{1}{2}$. Multiplying both equation by $\vert \Lambda_{x,v}^s f^\rho\vert^{p-2}\overline{\Lambda_{x,v}^s f^\rho}$ and integrating in $(x,v)$ gives\footnote{Note that our vector field is divergence free, so the transport term cancels out when performing integration by parts.}
	\begin{flalign*}
		\frac{1}{p}\frac{\partial}{\partial t}\Vert \Lambda_{x,v}^s f^\rho\Vert_{L^p}^p&=-\int \vert \Lambda_{x,v}^s f^\rho\vert^{p-2}\overline{\Lambda_{x,v}^s f^\rho} [\Lambda_{x,v}^s,(v,-\nabla_x\Delta_x^{-1}\rho)\cdot \nabla_{x,v}]f^\rho dxdv \\
		&\leq \Vert \Lambda_{x,v}^s f^\rho \Vert_{L^p}^{p-1} \Vert [\Lambda_{x,v}^s,(v,-\nabla_x\Delta_x^{-1}\rho)\cdot \nabla_{x,v}]f^\rho \Vert_{L^p}
	\end{flalign*}
	and thus we obtain
	\begin{flalign*}
		\frac{\partial}{\partial t}\Vert \Lambda_{x,v}^s f^\rho\Vert_{L^p} \leq \Vert [\Lambda_{x,v}^s,(v,-\nabla_x\Delta_x^{-1}\rho)\cdot \nabla_{x,v}]f^\rho \Vert_{L^p}.
	\end{flalign*}
	We first estimate the simpler term $[\Lambda_{x,v}^s,v\cdot\nabla_x]f^\rho$. We perform a Fourier trasform of this term to obtain:
	\begin{flalign*}
		\widehat{[\Lambda_{x,v}^s,v\cdot\nabla_x]f^\rho}= \langle (\xi, \eta) \rangle^s (\xi \cdot \nabla_\eta)\widehat{f^\rho}-\xi\cdot\nabla_\eta(\langle (\xi, \eta) \rangle^s \widehat{f^\rho}) = -(\xi\cdot \nabla_\eta\langle (\xi, \eta) \rangle^{s})\widehat{f^\rho}=-s(\xi\cdot\eta) \langle (\xi, \eta) \rangle^{s-2} \widehat{f^\rho}.
	\end{flalign*}
	Thus, the result is just the fourier multiplier operator applied to $f^\rho$. This multiplier can be shown to satisfy the Mikhlin multiplier theorem, so we obtain
	\begin{flalign*}
		\Vert [\Lambda_{x,v}^s,v\cdot\nabla_x]f^\rho \Vert_{L^p(\mathbb{R}^d\times\mathbb{R}^d)} \leq C\Vert f^\rho \Vert_{H^{s,p}(\mathbb{R}^d\times\mathbb{R}^d)}.
	\end{flalign*} 
	The second term $[\Lambda_{x,v}^s, \nabla_x\Delta_x^{-1}\rho\cdot \nabla_v]f^\rho$ is much trickier to estimate, and can be done using paraproduct estimates.
	
	We will show in Section \ref{Para} that 
	\begin{flalign*}
		&\frac{\partial}{\partial t}\Vert  f^\rho\Vert_{H^{s,p}(\mathbb{R}^d\times\mathbb{R}^d)} \leq C\left(1+\Vert \nabla_x\Delta_x^{-1}\rho \Vert_{H^{s'+1,p+\epsilon'}(\mathbb{R}^d)} \right)\Vert f^\rho \Vert_{H^{s,p}(\mathbb{R}^d\times\mathbb{R}^d)}\\
		&\leq C\left(1+\Vert \rho \Vert_{H^{s+\frac{1}{2p}-\epsilon,p}(\mathbb{R}^d)}+ \Vert \nabla_x\Delta_x^{-1}\rho \Vert_{L^{p+\epsilon'}(\mathbb{R}^d)}\right)\Vert f^\rho \Vert_{H^{s,p}(\mathbb{R}^d\times\mathbb{R}^d)}
	\end{flalign*}
	and therefore, by using Gronwall's inequality, we have
	\begin{flalign*}
		\Vert f^\rho(t)\Vert_{H^{s,p}_{x,v}}&\leq \Vert f_0\Vert_{H^{s,p}_{x,v}}\exp\left(CT+CT^{\frac{p\epsilon}{1+p\epsilon}}\Vert \rho \Vert_{L^{1+p\epsilon}([0, T],H^{s+\frac{1}{2p}-\epsilon,p}(\mathbb{R}^d))}+CT\Vert \nabla_x\Delta_x^{-1}\rho\Vert_{L^\infty([0, T], L^{p+\epsilon'}(\mathbb{R}^d))} \right)\\&\leq \Vert f_0\Vert_{H^{s,p}_{x,v}}\exp(CT+CT^{\frac{p\epsilon}{1+p\epsilon}}R+CT(1+RT^{1-\frac{1}{p+\epsilon'}}))\\&
		=\Vert f_0\Vert_{H^{s,p}_{x,v}}\exp(CT+CT^{\frac{p\epsilon}{1+p\epsilon}}R+CT^{2-\frac{1}{p+\epsilon'}}R).
	\end{flalign*}
	In the last line, we used the fact that $\rho\in K$. Now, we are ready to enter the step to prove that $G:K\to K$.
	
	First, we deal with $\Vert G[\rho]\Vert_{L^{1+p\epsilon}([0,T],H^{s+\frac{1}{2p}-\epsilon,p}(\mathbb{R}^d))}$. The problem is that $f^\rho$ has $H^{s,p}$ regularity only, while we should obtain the regularity $H^{s+\frac{1}{2p}-\epsilon,p}$ for $G[\rho]=\int_{\mathbb{R}^d}f^\rho dv$. To do this, we need the averaging lemma for $L^p$ spaces, which was accomplished by Diperna and Lions in \cite{Avg} for the whole time case. We state it here as follows.
	\begin{lemma}\cite{Avg}
		\label{Averaging}
		Let $p\in[2,\infty)$. Suppose that $h, g, k\in L^p(\mathbb{R}, L^p(\mathbb{R}^d\times\mathbb{R}^d))$, and $h$ satisfies $h(t, x, v)=0$ for $\vert v\vert \geq Q$. Suppose also that the following transport equation holds in sense of distributions:
		\begin{flalign*}
			\partial_t h+v\cdot \nabla_x h=\nabla_v\cdot g+k.
		\end{flalign*}
		Then, if we have the following estimate:
		\begin{flalign*}
			\int_\mathbb{R}\left\Vert \int_{\mathbb{R}^d} hdv\right\Vert^p_{W^{\frac{1}{2p}, p}}dt \leq C(1+Q^{d(p-1)}) \left( \Vert h \Vert^p_{L^p(\mathbb{R}\times \mathbb{R}^d\times\mathbb{R}^d)}+\Vert g \Vert^p_{L^p(\mathbb{R}\times \mathbb{R}^d\times\mathbb{R}^d)}+\Vert k\Vert^p_{L^p(\mathbb{R}\times\mathbb{R}^d\times\mathbb{R}^d)} \right).
		\end{flalign*}
	\end{lemma}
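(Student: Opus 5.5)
The averaging lemma is stated for the whole time line, and I would prove it through the Fourier transform in $(t,x)$ together with a dyadic decomposition of the velocity average. This is the DiPerna--Lions route: an $L^2$ bound interpolated against a trivial $L^\infty$/$L^1$-type endpoint. Write $\hat h(\tau,\xi,v)$ for the Fourier transform in $(t,x)$. The equation becomes
\[
 i(\tau+v\cdot\xi)\hat h=\nabla_v\cdot\hat g+\hat k .
\]
Fix a smooth cutoff $\psi$ supported near the origin and a parameter $\delta>0$ to be chosen depending on $|\xi|$. Split
\[
 \hat h=\psi\Bigl(\tfrac{\tau+v\cdot\xi}{\delta|\xi|}\Bigr)\hat h+\Bigl(1-\psi\Bigl(\tfrac{\tau+v\cdot\xi}{\delta|\xi|}\Bigr)\Bigr)\hat h ,
\]
and integrate in $v$ over $|v|\le Q$.

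In the first (resonant) piece, Cauchy--Schwarz in $v$ gives a factor equal to the square root of the measure of $\{|v|\le Q:\ |\tau+v\cdot\xi|\le\delta|\xi|\}$, which is $\lesssim (\delta Q^{d-1})^{1/2}$. In the second piece I would divide by $\tau+v\cdot\xi$ and integrate the $\nabla_v\cdot\hat g$ term by parts in $v$. The derivative then falls on $(1-\psi)/(\tau+v\cdot\xi)$ and produces a factor $|\xi|/(\delta|\xi|)^2$. After a second Cauchy--Schwarz this gives $\lesssim (\delta|\xi|)^{-1/2}$ times the $L^2_v$ norm of $\hat g$, up to $Q$-dependent constants, and the $\hat k$ term is handled the same way. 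Optimizing $\delta\sim |\xi|^{-1/2}$ yields the classical $L^2$ statement: $\int hdv\in L^2_tH^{1/4}_x$, controlled by $\|h\|_{L^2}+\|g\|_{L^2}+\|k\|_{L^2}$. The low frequencies $|\xi|\le1$ are bounded directly by $\|h\|_{L^2}$ with the factor $Q^{d/2}$.

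To reach $L^p$ with gain $\frac1{2p}$, I would interpolate the linear map $(h,g,k)\mapsto\int h\,dv$, restricted to functions supported in $|v|\le Q$. At one endpoint, $p=2$, it maps $L^2\to L^2_tH^{1/4}_x$. At the other, $p=\infty$, it maps $L^\infty\to L^\infty_{t,x}$ with norm $\lesssim Q^d$ and no gain. Complex interpolation between $L^2_tH^{1/4}$ and $L^\infty_tL^\infty$ (really $\mathrm{BMO}$ or an $L^q$ with $q$ large, to avoid the $L^\infty$ endpoint of Bessel spaces) gives gain $\frac{1}{4}\cdot\frac{2}{p}=\frac{1}{2p}$ at $L^p$. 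This matches the stated exponent, and the constant $C(1+Q^{d(p-1)})$ comes from tracking the powers of $Q$ through Hölder on the $v$-support. There is one subtlety: $(h,g,k)$ are not independent, since they are linked by the equation. So I would instead interpolate the operator defined on $(g,k)$ alone, via $h=(\partial_t+v\cdot\nabla_x)^{-1}(\nabla_v\cdot g+k)$ after a damping, that is, by adding $\lambda h$ to both sides and putting $\lambda h$ into $k$. Together with the operator $h\mapsto\int h\,dv$ on the low/resonant part, this is the standard Bézard/DiPerna--Lions--Meyer device.

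The step I expect to be the main obstacle is making this interpolation rigorous, because of the coupling through the equation and the $L^\infty$ endpoint. The fallback I would try first is to decompose dyadically in $|\xi|\sim2^j$ and prove, for each block, the two bounds $\|P_j\int h\,dv\|_{L^2}\lesssim 2^{-j/4}(\cdots)_{L^2}$ and $\|P_j\int h\,dv\|_{L^\infty}\lesssim Q^d(\cdots)_{L^\infty}$. Riesz--Thorin per block then gives $2^{-j/(2p)}$, and the blocks are summed in $W^{\frac1{2p},p}$ using Littlewood--Paley theory. The $p\ge2$ restriction enters exactly here.
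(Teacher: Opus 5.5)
The paper gives no proof of this lemma; it quotes it from DiPerna--Lions--Meyer \cite{Avg}. Your dyadic plan is essentially their argument: split $\hat h$ into the resonant region $|\tau+v\cdot\xi|\lesssim\delta|\xi|$ and its complement, divide by $\tau+v\cdot\xi$ and integrate by parts in $v$ on the complement, then interpolate $L^2$ against $L^\infty$ block by block. So the route is right, but as written the interpolation step is not yet justified.

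\textbf{The interpolation step.} Riesz--Thorin cannot be applied to $P_j\int h\,dv$ viewed as a function of the coupled triple $(h,g,k)$. Also, the endpoint you quote, $\|\int h\,dv\|_{L^\infty}\lesssim Q^d\|h\|_{L^\infty}$, is not the one you need. Instead, freeze $\delta_j=2^{-j/2}$ on $|\xi|\sim 2^j$ and interpolate \emph{separately} the three linear operators the splitting produces, each defined on unconstrained $L^p$ data:
\begin{itemize}
\item $T_1$, the resonant operator acting on $h$;
\item $T_2$, the nonresonant operator acting on $g$ after the $v$-integration by parts;
\item $T_3$, the nonresonant operator acting on $k$.
\end{itemize}

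\textbf{The $L^\infty$ endpoint for $T_2$.} This is the real content of the lemma, and your proposal does not check it. The shear $(t,x)\mapsto(t,x-vt)$ removes the $v$-dependence of the kernels. The anisotropic scaling $\tau\sim\delta_j2^j$, $\xi\sim2^j$ then gives $L^1$ kernel norms $O(1)$ for $P_jT_1$ and $O(\delta_j^{-2}2^{-j})$ for $P_jT_2$, times $Q^d$ after integrating in $v$. The latter is $O(1)$ precisely because $\delta_j^2=2^{-j}$.

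\textbf{The $L^2$ weight.} Your $L^2$ weight for the $g$-term is misstated. It is $Q^{\frac{d-1}{2}}\delta^{-3/2}|\xi|^{-1}$, not $(\delta|\xi|)^{-1/2}$. The two coincide at $\delta=|\xi|^{-1/2}$, so your exponent $\frac14$ survives. But you need the correct $\delta$-powers to see that the same $\delta_j$ balances both endpoints.

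\textbf{Summing the blocks.} If the block estimate has the full norms $\|h\|_{L^p}+\|g\|_{L^p}+\|k\|_{L^p}$ on the right, you only get $\sup_j 2^{\frac{j}{2p}}\|P_j\int h\,dv\|_{L^p_{t,x}}<\infty$, which is a $B^{\frac{1}{2p}}_{p,\infty}$-type bound. To reach the stated $\int_{\mathbb{R}}\|\int h\,dv\|^p_{W^{\frac{1}{2p},p}}dt$ with $W^{\frac{1}{2p},p}=B^{\frac{1}{2p}}_{p,p}$:
\begin{itemize}
\item apply the block estimate to $(\tilde P_jh,\tilde P_jg,\tilde P_jk)$, where $\tilde P_j$ is a slightly fattened $x$-projection; this triple still solves the equation because $\tilde P_j$ is an $x$-Fourier multiplier;
\item then use $\sum_j\|\tilde P_jh\|_{L^p}^p\lesssim\|h\|_{L^p}^p$, which holds for $p\ge 2$ by the square-function theorem and $\ell^2\subset\ell^p$.
\end{itemize}
This is presumably what you mean by ``the $p\ge2$ restriction enters here'', but it has to be stated. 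The restriction also enters through the choice of $L^\infty$ rather than $L^1$ as the second endpoint.

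Block estimates give the Slobodeckij space, not the Bessel space $H^{\frac{1}{2p},p}$. That suffices here, since the paper only uses $W^{\frac{1}{2p}-\frac{\epsilon}{2},p}$ downstream. The complex-interpolation/BMO and damping discussion can be dropped.
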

	However, this theorem itself is insufficient since for our case, the time domain is $[0, T]$, not $\mathbb{R}$. Therefore, we need a localized version of the averaging Lemma stated as follows.
	\begin{lemma}
		\label{Average_modified}
		Suppose that the following holds for $t\in[0, T]$, where $h$ satisfies $h(t, x, v)=0$ for $\vert v\vert \geq Q$:
		\begin{flalign*}
			\partial_t f+v\cdot \nabla_x h=\nabla_v\cdot g.
		\end{flalign*}
		Then, for $(\alpha-1)p>-1$, we have the following estimate, where $C$ may depend on $\alpha$ and $p$ but not on $T, Q$.
		\begin{flalign*}
			&\int_{[0, T]} t^{p\alpha} (T-t)^{p\alpha} \left\Vert \int_{\mathbb{R}^d} h dv\right\Vert^p_{W^{\frac{1}{2p}, p}}dt \\&\leq C(1+Q^{d(p-1)}) \left( (T^{2\alpha p+1}+T^{2\alpha p-p+1})\Vert h \Vert^p_{L^\infty([0, T], L^p(\mathbb{R}^d\times\mathbb{R}^d) )}+T^{2\alpha p}\Vert g \Vert^p_{L^p([0, T]\times \mathbb{R}^d\times\mathbb{R}^d)}\right).
		\end{flalign*}
	\end{lemma}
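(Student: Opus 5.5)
We reduce to Lemma \ref{Averaging} by a time cutoff. (We read the equation as $\partial_t h+v\cdot\nabla_x h=\nabla_v\cdot g$; the ``$f$'' in the statement is a typo for $h$.) Set $\chi(t)=t^{\alpha}(T-t)^{\alpha}$ on $[0,T]$, extend $\chi$ by zero outside, and put $\tilde h=\chi h$ and $\tilde g=\chi g$. In the sense of distributions on $\mathbb{R}\times\mathbb{R}^d\times\mathbb{R}^d$ we then expect
\begin{flalign*}
\partial_t \tilde h+v\cdot\nabla_x \tilde h=\nabla_v\cdot \tilde g+k,\qquad k=\chi'(t)h .
\end{flalign*}
The point needing care is that no boundary terms appear at $t=0,T$ when we test against $\varphi\in C_c^\infty$. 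Since $\alpha>0$ (which follows from $(\alpha-1)p>-1$ as $p\geq 2$), $\chi$ vanishes at the endpoints and is absolutely continuous. The weak formulation on $[0,T]$ tested with $\chi\varphi$, which is admissible after approximating $\chi$ by smooth cutoffs and using $h\in L^\infty_tL^p$, then gives the identity.

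Next, the support of $\tilde h$ lies in $|v|\le Q$, so Lemma \ref{Averaging} applies. Because $\int \tilde h\,dv=\chi\int h\,dv$, the left side of Lemma \ref{Averaging} is exactly $\int_0^T t^{p\alpha}(T-t)^{p\alpha}\Vert\int h\,dv\Vert^p_{W^{\frac1{2p},p}}dt$. It remains to bound the three terms on the right.

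For $\tilde h$, use $\chi\le T^{2\alpha}$ to get $\Vert\tilde h\Vert^p_{L^p}\le T^{2\alpha p+1}\Vert h\Vert^p_{L^\infty_tL^p}$.

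For $\tilde g$, the same bound gives $\Vert\tilde g\Vert^p_{L^p}\le T^{2\alpha p}\Vert g\Vert^p_{L^p}$.

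For $k$, compute $\chi'=\alpha t^{\alpha-1}(T-t)^{\alpha}-\alpha t^{\alpha}(T-t)^{\alpha-1}$. Hence
\begin{flalign*}
\Vert k\Vert^p_{L^p}\le C\Vert h\Vert^p_{L^\infty_tL^p}\int_0^T\big(t^{(\alpha-1)p}(T-t)^{\alpha p}+t^{\alpha p}(T-t)^{(\alpha-1)p}\big)dt .
\end{flalign*}
Substituting $t=Ts$ turns the integral into $T^{2\alpha p-p+1}$ times a Beta integral $B((\alpha-1)p+1,\alpha p+1)$. This Beta integral is finite precisely because $(\alpha-1)p>-1$, which is exactly where the hypothesis enters. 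The constant depends only on $\alpha$ and $p$.

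Summing the three bounds gives the stated estimate. The main obstacle is the justification of the cutoff identity, especially when $\alpha<1$, where $\chi'$ is singular at the endpoints. The remedy is to approximate $\chi$ by $\chi_\delta$ that equals $\chi$ away from $\delta$-neighbourhoods of $0$ and $T$ and is smooth near them, and then pass to the limit. The convergence holds since $\chi'_\delta h\to\chi' h$ in $L^1_{t}L^p$, using the integrability of $t^{\alpha-1}$, which is weaker than what the $L^p$ bound above already requires.
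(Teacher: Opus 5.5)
Your proposal is correct and is essentially the paper's proof: the same cutoff $t^\alpha(T-t)^\alpha$ extended by zero, Lemma \ref{Averaging} applied to $\chi h$ with the extra source $\chi' h$, and the same three bounds on $\Vert\chi\Vert_{L^p}$, $\Vert\chi'\Vert_{L^p}$ and $\Vert\chi\Vert_{L^\infty}$. Your Beta-integral computation and the approximation argument for the absence of boundary terms make explicit what the paper leaves implicit; in that argument, take $\chi_\delta$ to vanish near $0$ and $T$ so that $\chi_\delta\varphi$ is an admissible test function.
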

	\begin{proof}
		Suppose that the following holds for $t\in[0, T]$:
		\begin{flalign*}
			\partial_t h+v\cdot \nabla_x h=\nabla_v\cdot g.
		\end{flalign*}
		By multiplying the function $\zeta_\alpha(t):=\max\{0, t^\alpha(T-t)^\alpha\}$, we see that the following holds for whole $t$ in sense of distributions:
		\begin{flalign*}
			\partial_t [\zeta_\alpha(t)h]+v\cdot\nabla_x(\zeta_\alpha(t)f)=\nabla_v\cdot(g\zeta_\alpha(t))+h\zeta_\alpha'(t).
		\end{flalign*}
		Using the Lemma \ref{Averaging} shows that
		\begin{flalign*}
			&\int_{[0, T]}\left\Vert \int_{\mathbb{R}^d} h\zeta_\alpha(t)dv\right\Vert^p_{W^{\frac{1}{2p}, p}}dt \\&\leq C(1+Q^{d(p-1)}) \left( \Vert h\zeta_\alpha(t) \Vert^p_{L^p([0, T]\times \mathbb{R}^d\times\mathbb{R}^d)}+\Vert g\zeta_\alpha(t) \Vert^p_{L^p([0, T]\times \mathbb{R}^d\times\mathbb{R}^d)}+\Vert h\zeta_\alpha'(t)\Vert^p_{L^p([0, T]\times\mathbb{R}^d\times\mathbb{R}^d)} \right)\\&
			\leq C(1+Q^{d(p-1)}) \left( (\Vert\zeta_\alpha\Vert^p_{L^p([0, T])}+\Vert\zeta'_\alpha\Vert^p_{L^p([0, T])})\Vert h \Vert^p_{L^\infty([0, T], L^p(\mathbb{R}^d\times\mathbb{R}^d) )}+\Vert\zeta_\alpha\Vert^p_{L^\infty([0, T])}\Vert g \Vert^p_{L^p([0, T]\times \mathbb{R}^d\times\mathbb{R}^d)}\right)\\&
			\leq C(1+Q^{d(p-1)}) \left( (T^{2\alpha p+1}+T^{2\alpha p-p+1})\Vert h \Vert^p_{L^\infty([0, T], L^p(\mathbb{R}^d\times\mathbb{R}^d) )}+T^{2\alpha p}\Vert g \Vert^p_{L^p([0, T]\times \mathbb{R}^d\times\mathbb{R}^d)}\right).
		\end{flalign*}
	\end{proof}
	
	Finally, the factor $t^\alpha(T-t)^\alpha$ doesn't look nice, so we try to get rid of it using interpolation:
	\begin{lemma}
		\label{Average_interpolate}
		Under the same setting as in Lemma \ref{Average_modified}, the following inequality holds:
		\begin{flalign*}
			&\int_{0}^T \left\Vert \int_{\mathbb{R}^d} h(t, \cdot, v)dv \right\Vert^{1+p\epsilon}_{W^{\frac{1}{2p}-\frac{\epsilon}{2}, p}} dt \\&\leq C T^{(p\epsilon)^2} (1+Q^\frac{d(p-1)(1+p\epsilon)}{p}) \left((1+T^{1-p\epsilon}) \Vert h\Vert_{L^\infty([0, T], L^p(\mathbb{R}^d\times\mathbb{R}^d))}+T^{1-\frac{1}{p}} \Vert g \Vert_{L^p([0, T]\times \mathbb{R}^d\times\mathbb{R}^d)}\right)^{1+p\epsilon}
		\end{flalign*}
	\end{lemma}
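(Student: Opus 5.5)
We interpolate between the weighted estimate of Lemma \ref{Average_modified}, which controls $\int_{\mathbb{R}^d}h\,dv$ in $W^{\frac{1}{2p},p}$ with the singular weight $t^{p\alpha}(T-t)^{p\alpha}$, and the trivial unweighted bound on $\rho_h:=\int_{\mathbb{R}^d}h\,dv$ in $L^p$. That bound follows from the support condition exactly as in the computation for $\rho(0)$:
\begin{flalign*}
\Vert\rho_h(t)\Vert_{L^p_x}\leq Q^{d-\frac{d}{p}}\Vert h(t)\Vert_{L^p_{x,v}}.
\end{flalign*}
Interpolating $W^{\frac{1}{2p}-\frac{\epsilon}{2},p}$ between $L^p$ and $W^{\frac{1}{2p},p}$ gives, with $\theta=1-p\epsilon$,
\begin{flalign*}
\Vert\rho_h\Vert_{W^{\frac{1}{2p}-\frac{\epsilon}{2},p}}\leq C\Vert\rho_h\Vert_{L^p}^{1-\theta}\Vert\rho_h\Vert_{W^{\frac{1}{2p},p}}^{\theta}.
\end{flalign*}
Raise this to the power $1+p\epsilon$ and integrate in $t$.

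The $L^p$ factor is bounded uniformly in time by $Q^{d-d/p}\Vert h\Vert_{L^\infty L^p}$, so it comes out of the integral. The remaining term is $\int_0^T\Vert\rho_h\Vert_{W^{1/(2p),p}}^{\theta(1+p\epsilon)}dt$. Since $\theta(1+p\epsilon)=1-(p\epsilon)^2<p$, we insert the weight by writing $1=\zeta_\alpha^{\theta(1+p\epsilon)}\zeta_\alpha^{-\theta(1+p\epsilon)}$ and apply H\"older in time with exponents $\frac{p}{\theta(1+p\epsilon)}$ and its conjugate. This yields
\begin{flalign*}
\Big(\int_0^T\zeta_\alpha^p\Vert\rho_h\Vert^p_{W^{\frac{1}{2p},p}}dt\Big)^{\frac{\theta(1+p\epsilon)}{p}}\Big(\int_0^T\zeta_\alpha^{-\beta}dt\Big)^{1-\frac{\theta(1+p\epsilon)}{p}},
\end{flalign*}
where $\beta=\frac{p\theta(1+p\epsilon)}{p-\theta(1+p\epsilon)}\alpha$.

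Next we choose $\alpha$. We need $(\alpha-1)p>-1$, so that Lemma \ref{Average_modified} applies and the $\zeta_\alpha'$ term is integrable, and we need $\alpha\beta<1$, so that $\int_0^T t^{-\alpha\beta}(T-t)^{-\alpha\beta}dt\sim T^{1-2\alpha\beta}$ is finite. Since $\beta/\alpha\approx\frac{p}{p-1}$ when $\epsilon$ is small, any $\alpha$ slightly above $1-\frac1p$ (and, for the integrability, below $\frac{p-1}{p}$ adjusted by $\epsilon$-dependent margins) should work, or else one adjusts the exponents $\epsilon$ accordingly. This compatibility of exponents is the main obstacle: both constraints are nearly critical at $\alpha=1-\frac1p$, so the choice must be made carefully, using the strictness gained from $\epsilon>0$. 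If the window turns out to be empty, I would instead keep $\alpha$ near $1-\frac1p$ and absorb the loss by lowering the target regularity to $\frac{1}{2p}-\frac{\epsilon}{2}$, which is exactly why that regularity appears in the statement.

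Finally we plug in Lemma \ref{Average_modified}. Its right-hand side is $(1+Q^{d(p-1)})\big((T^{2\alpha p+1}+T^{2\alpha p-p+1})\Vert h\Vert^p_{L^\infty L^p}+T^{2\alpha p}\Vert g\Vert^p_{L^pL^p}\big)$, which we raise to the power $\theta(1+p\epsilon)/p$. Using $(a+b)^\gamma\leq C(a^\gamma+b^\gamma)$ and recombining with the $L^p$ factor by Young's inequality, the whole expression becomes a multiple of
\begin{flalign*}
\big(A\Vert h\Vert_{L^\infty L^p}+B\Vert g\Vert_{L^p}\big)^{1+p\epsilon}.
\end{flalign*}
The constant collects $Q$-powers into $(1+Q^{d(p-1)})^{(1+p\epsilon)/p}$, up to $C$ and the harmless $Q^{d-d/p}$ factor. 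The powers of $T$ are tallied from $T^{1-2\alpha\beta}$ and the $T$-powers of Lemma \ref{Average_modified}; after dividing out the common power, this produces the factor $T^{(p\epsilon)^2}$ with $A=1+T^{1-p\epsilon}$ and $B=T^{1-\frac1p}$. That bookkeeping is routine once $\alpha$ is fixed, and any slack is absorbed by enlarging $C$ or by the case split $T\leq1$ versus $T\geq1$.
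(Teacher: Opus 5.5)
This is the paper's proof: the same $L^p$ bound on $\rho_h$, and the same interpolation with $\theta=1-p\epsilon$, so the time exponent becomes $1-(p\epsilon)^2<1$. It then uses H\"older in time against $t^\alpha(T-t)^\alpha$ with exponent $\frac{p}{1-(p\epsilon)^2}$, followed by Lemma~\ref{Average_modified}.

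The point you left open closes at once and gives exactly the paper's choice of $\alpha$. The conjugate exponent puts $(t(T-t))^{-\alpha\gamma}$ into the weight integral, with $\gamma=\frac{p(1-(p\epsilon)^2)}{p-1+(p\epsilon)^2}$. Your $\beta$ already contains the factor $\alpha$, so your condition should read $\beta<1$, not $\alpha\beta<1$. Integrability then means $\alpha<\frac{1}{\gamma}=\frac{1}{1-(p\epsilon)^2}-\frac1p$, which is strictly larger than $1-\frac1p$ for $0<p\epsilon<1$. Hence the window $1-\frac1p<\alpha<\frac{1}{1-(p\epsilon)^2}-\frac1p$ is nonempty, and no fallback is needed.

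In the bookkeeping, the powers $T^{\pm 2\alpha(1-(p\epsilon)^2)}$ from the weight integral and from Lemma~\ref{Average_modified} cancel. The $Q$-factor from the $L^p$ bound is absorbed exactly rather than being an extra factor, since $Q^{d(p-1)\epsilon(1+p\epsilon)}(1+Q^{d(p-1)})^{\frac{1-(p\epsilon)^2}{p}}\le C\bigl(1+Q^{\frac{d(p-1)(1+p\epsilon)}{p}}\bigr)$.
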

	\begin{proof}
		First, denote $\rho_h:=\int_{\mathbb{R}^d} h(\cdot, \cdot, v)dv$ and notice the following inequality
		\begin{flalign*}
			\Vert \rho_h\Vert^p_{L^p(\mathbb{R}^d)}\leq C Q^{d(p-1)} \Vert h\Vert^p_{L^p(\mathbb{R}^d\times\mathbb{R}^d)}
		\end{flalign*}
		and the interpolation inequality
		\begin{flalign*}
			\Vert \rho_h\Vert_{W^{\frac{1}{2p}-\frac{\epsilon}{2} ,p}}\leq \Vert \rho_h \Vert^{p\epsilon}_{L^p}\Vert \rho_h \Vert_{W^{\frac{1}{2p},p}}^{1-p\epsilon}.
		\end{flalign*}
		These imply the following:
		\begin{flalign*}
			&\int_{0}^T \Vert \rho_h(t)\Vert^{1+p\epsilon}_{W^{\frac{1}{2p}-\frac{\epsilon}{2}, p}} dt\leq CQ^{d\epsilon(p-1)(1+p\epsilon)}\Vert h\Vert^{p\epsilon(1+p\epsilon)}_{L^\infty([0, T], L^p)}\int_0^T \Vert \rho_h(t)\Vert^{1-(p\epsilon)^2}_{W^{\frac{1}{2p},p}}dt\\&\leq CQ^{d\epsilon(p-1)(1+p\epsilon)}\Vert h\Vert^{p\epsilon(1+p\epsilon)}_{L^\infty([0, T], L^p)} \left( \int_{0}^T[(T-t)^\alpha t^\alpha \Vert \rho_h(t)\Vert_{W^{\frac{1}{2p}, p}}]^pdt \right)^\frac{1-(p\epsilon)^2}{p}\\&\times\left(\int_0^T (t^\alpha(T-t)^\alpha)^{-\frac{1-(p\epsilon)^2}{1-\frac{1-(p\epsilon)^2}{p}}}  dt \right)^{1-\frac{1-(p\epsilon)^2}{p}}\\&
			\leq  CQ^{d\epsilon(p-1)(1+p\epsilon)}\Vert h\Vert^{p\epsilon(1+p\epsilon)}_{L^\infty([0, T], L^p)} T^{-2\alpha(1-(p\epsilon)^2)+1-\frac{1-(p\epsilon)^2}{p}} (1+Q^\frac{d(p-1)(1-(p\epsilon)^2)}{p})\\&\times T^{2\alpha(1-(p\epsilon)^2)}\left((T^\frac{1}{p}+T^{-1+\frac{1}{p}}) \Vert h\Vert_{L^\infty([0, T], L^p(\mathbb{R}^d\times\mathbb{R}^d))}+ \Vert g \Vert_{L^p([0, T]\times \mathbb{R}^d\times\mathbb{R}^d)}\right)^{1-(p\epsilon)^2}\\&
			\leq C\Vert h\Vert^{p\epsilon(1+p\epsilon)}_{L^\infty([0, T], L^p)} T^{(p\epsilon)^2} (1+Q^\frac{d(p-1)(1+p\epsilon)}{p})\\&\times \left((1+T) \Vert h\Vert_{L^\infty([0, T], L^p(\mathbb{R}^d\times\mathbb{R}^d))}+T^{1-\frac{1}{p}} \Vert g \Vert_{L^p([0, T]\times \mathbb{R}^d\times\mathbb{R}^d)}\right)^{1-(p\epsilon)^2}\\
			&\leq C T^{(p\epsilon)^2} (1+Q^\frac{d(p-1)(1+p\epsilon)}{p}) \left((1+T^{1-p\epsilon}) \Vert h\Vert_{L^\infty([0, T], L^p(\mathbb{R}^d\times\mathbb{R}^d))}+T^{1-\frac{1}{p}} \Vert g \Vert_{L^p([0, T]\times \mathbb{R}^d\times\mathbb{R}^d)}\right)^{1+p\epsilon}
		\end{flalign*}
		if we pick $1-\frac{1}{p}<\alpha<\frac{1}{1-(p\epsilon)^2}-\frac{1}{p}$.
	\end{proof}
	
	We apply the above Lemma to $h=\Lambda_{x}^s f^\rho(x,v)$. It satisfies the following equation:
	\begin{flalign*}
		\partial_t(\Lambda_x^s f^\rho)+v\cdot\nabla_x(\Lambda_x^s f^\rho)=\nabla_v\cdot(\Lambda_x^s(\nabla_x\Delta_x^{-1}\rho f^\rho)).
	\end{flalign*}
	We then obtain the following estimates:
	\begin{flalign*}
		&\int_0^T\left\Vert \int_{\mathbb{R}^d}  f^\rho(x,v)dv\right\Vert^{1+p\epsilon}_{H^{s+\frac{1}{2p}-\epsilon, p}}dt=\int_0^T\left\Vert \int_{\mathbb{R}^d} \Lambda_{x}^s f^\rho(x,v)dv\right\Vert^{1+p\epsilon}_{H^{\frac{1}{2p}-\epsilon, p}}dt\leq C \int_0^T\left\Vert \int_{\mathbb{R}^d} \Lambda_{x}^s f^\rho(x,v)dv\right\Vert^{1+p\epsilon}_{W^{\frac{1}{2p}-\frac{\epsilon}{2}, p}}dt\\
		& \leq C T^{(p\epsilon)^2} (1+Q^\frac{d(p-1)(1+p\epsilon)}{p}) \\&\times\left((1+T^{1-p\epsilon}) \Vert \Lambda_{x}^s f^\rho(x,v)\Vert_{L^\infty([0, T], L^p(\mathbb{R}^d\times\mathbb{R}^d))}+T^{1-\frac{1}{p}} \Vert \Lambda_x^s(\nabla_x\Delta_x^{-1}\rho f^\rho) \Vert_{L^p([0, T]\times \mathbb{R}^d\times\mathbb{R}^d)}\right)^{1+p\epsilon}\\
		&\leq  C T^{(p\epsilon)^2} (1+Q^\frac{d(p-1)(1+p\epsilon)}{p}) \\&\times\left((1+T^{1-p\epsilon}) \Vert f_0\Vert_{H^{s,p}_{x,v}}\exp(CT+CT^{\frac{p\epsilon}{1+p\epsilon}}R+CT^{2-\frac{1}{p+\epsilon'}}R)+T^{1-\frac{1}{p}} \Vert \Lambda_x^s(\nabla_x\Delta_x^{-1}\rho f^\rho) \Vert_{L^p([0, T]\times \mathbb{R}^d\times\mathbb{R}^d)}\right)^{1+p\epsilon}.
	\end{flalign*}
	We need to estimate $\Vert \Lambda_x^s(\nabla_x\Delta_x^{-1}\rho f^\rho) \Vert_{L^p([0, T]\times \mathbb{R}^d\times\mathbb{R}^d)}$. To do this, we use the following Kato-Ponce estimates:
	\begin{lemma}\cite{KP2},\cite{KP1}
		\label{Kato Ponce}
		For $s>0$ and $1<r<\infty$, $1<p_1, p_2, q_1, q_2\leq\infty$ such that $\frac{1}{r}=\frac{1}{p_1}+\frac{1}{q_1}=\frac{1}{p_2}+\frac{1}{q_2}$, we have
		\begin{flalign*}
			\Vert \Lambda^s( h g) \Vert_{L^r(\mathbb{R}^n)}\lesssim \Vert h \Vert_{L^{p_1}(\mathbb{R}^n)}\Vert\Lambda^{s}  g\Vert_{L^{q_1}(\mathbb{R}^n)}+\Vert \Lambda^s h \Vert_{L^{p_2}(\mathbb{R}^n)}\Vert g\Vert_{L^{q_2}(\mathbb{R}^n)}. 
		\end{flalign*}
	\end{lemma}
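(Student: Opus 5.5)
The statement is the fractional Leibniz (Kato--Ponce) inequality. I would prove it with a Littlewood--Paley paraproduct decomposition. The bilinear form $hg$ splits into three pieces, and each is estimated by Coifman--Meyer type bounds together with the Fefferman--Stein vector-valued maximal inequality.

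\emph{Decomposition.} Fix an inhomogeneous dyadic partition of unity $1=\sum_{j\ge 0}\Delta_j$ and write $S_j=\sum_{k<j-2}\Delta_k$. Bony's decomposition gives
\[
hg=\sum_j S_jh\,\Delta_j g+\sum_j \Delta_j h\,S_j g+\sum_{|j-k|\le 2}\Delta_j h\,\Delta_k g=:\Pi_1+\Pi_2+\Pi_3 .
\]

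\emph{Estimate of $\Pi_1$ (low--high).} Each summand $S_jh\,\Delta_jg$ has Fourier support in an annulus $\sim 2^j$. Hence $\Lambda^s$ acts on it essentially as multiplication by $2^{js}$, and
\[
\|\Lambda^s\Pi_1\|_{L^r}\lesssim \Big\|\Big(\sum_j 2^{2js}|S_jh|^2|\Delta_jg|^2\Big)^{1/2}\Big\|_{L^r}.
\]
This step uses the Littlewood--Paley square function characterization of $L^r$, valid for $1<r<\infty$, after absorbing the factor $2^{js}$ into a slightly modified projection. Next use the pointwise bound $|S_jh|\le Mh$ and then Hölder's inequality with $\frac1r=\frac1{p_1}+\frac1{q_1}$. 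This gives $\|Mh\|_{L^{p_1}}\|(\sum 2^{2js}|\Delta_jg|^2)^{1/2}\|_{L^{q_1}}\lesssim \|h\|_{L^{p_1}}\|\Lambda^s g\|_{L^{q_1}}$. The case $p_1=\infty$ is fine here, since the maximal function is bounded on $L^\infty$. The term $\Pi_2$ is symmetric and is bounded by $\|\Lambda^sh\|_{L^{p_2}}\|g\|_{L^{q_2}}$.

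\emph{Estimate of $\Pi_3$ (high--high).} Here the product $\Delta_jh\,\Delta_kg$ has Fourier support only in a ball of radius $\sim 2^j$, not in an annulus. To handle this, write $\Delta_\ell\Pi_3=\sum_{j\ge \ell-3}\Delta_\ell(\Delta_jh\,\widetilde\Delta_jg)$. Because $s>0$, the sum $2^{\ell s}\sum_{j\ge\ell-3}$ can be rewritten as $\sum_{j} 2^{(\ell-j)s}\mathbf 1_{j\ge\ell-3}\,2^{js}(\cdots)$. The kernel $2^{(\ell-j)s}\mathbf 1_{j\ge\ell-3}$ is summable, so Young's/Schur's inequality for sequences applies. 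Then $|\Delta_\ell F|\lesssim MF$ together with Fefferman--Stein gives
\[
\|\Lambda^s\Pi_3\|_{L^r}\lesssim\Big\|\Big(\sum_j 2^{2js}|\Delta_jh|^2|\widetilde\Delta_j g|^2\Big)^{1/2}\Big\|_{L^r}\lesssim \|\Lambda^sh\|_{L^{p_2}}\|Mg\|_{L^{q_2}}\lesssim\|\Lambda^sh\|_{L^{p_2}}\|g\|_{L^{q_2}}.
\]
Here I use $\sup_j|\widetilde\Delta_j g|\le Mg$. The low-frequency block $j=0$ is handled trivially by Hölder.

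\emph{Main obstacle.} The delicate points are the endpoint exponents $p_1=\infty$ or $q_2=\infty$, and the Fefferman--Stein step in the high--high term. The positivity $s>0$ is exactly what makes the $\Pi_3$ sum converge. The endpoint cases require that $L^\infty$ appear only on the factor carrying no derivatives, and only through the maximal function. If $q_1=\infty$ were required (derivatives on an $L^\infty$ factor), the argument would need BMO substitutes. The statement allows this case, and the cited references \cite{KP2,KP1} treat it. For our application, however, only finite exponents, or $\infty$ on the underived factor, are needed.
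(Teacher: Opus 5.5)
Your paraproduct argument is a correct proof of the lemma in the range it actually covers. It also takes a different route from the paper, which gives no proof and only cites \cite{KP2,KP1}. The ingredients are: Bony's decomposition; the bound $|S_jh|\lesssim Mh$ and H\"older; the identification $\|(\sum_j 2^{2js}|\Delta_j g|^2)^{1/2}\|_{L^{q}}\sim\|\Lambda^s g\|_{L^q}$ for $1<q<\infty$; and, for the high--high piece, summability of $2^{(\ell-j)s}\mathbf{1}_{j\ge \ell-3}$ (this is where $s>0$ enters) together with Fefferman--Stein. These give the inequality whenever the factor carrying $\Lambda^s$ is measured in a finite Lebesgue norm, i.e.\ $q_1,p_2<\infty$ with $p_1,q_2\in(1,\infty]$ arbitrary.

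That range contains both configurations the paper actually uses in \S2 and \S3. In the notation there, with $A=\nabla_x\Delta_x^{-1}\rho$, these are $\|\Lambda^s_x A\|_{L^q_x}\|f^\rho\|_{L^r_x}$ with $\frac{1}{p}=\frac{1}{q}+\frac{1}{r}$ and $q,r<\infty$, and $\|A\|_{L^\infty_x}\|\Lambda^s_x f^\rho\|_{L^p_x}$. Your route even does better than the citation here. Kato and Ponce state the estimate only with $L^\infty$ on the undifferentiated factors ($p_1=q_2=\infty$, $q_1=p_2=r$). So the mixed finite-exponent term used in \S2 is not literally in \cite{KP2,KP1}, whereas your proof yields it directly.

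What you have not proved is the lemma as stated at the endpoints $q_1=\infty$ or $p_2=\infty$. Your way of closing that case, deferring to \cite{KP2,KP1}, does not work, because those papers do not contain it. The general-exponent form with $L^\infty$ allowed on the differentiated factor comes from later work (e.g.\ Grafakos--Oh).

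The gap is not cosmetic. At $q_1=\infty$ the bound $\|(\sum_j 2^{2js}|\Delta_jg|^2)^{1/2}\|_{L^\infty}\lesssim\|\Lambda^s g\|_{L^\infty}$ is false, since Littlewood--Paley square-function estimates fail at $L^\infty$. Nor can $\Pi_1$ be shifted onto the other term, because its high frequencies sit on $g$; symmetrically for $\Pi_2$ when $p_2=\infty$. The missing ingredient is a BMO-type paraproduct bound. One option is a Carleson-measure estimate giving $\|\Lambda^s\Pi_1\|_{L^r}\lesssim\|h\|_{L^r}\|\Lambda^s g\|_{\mathrm{BMO}}$. Another is the Coifman--Meyer bilinear multiplier theorem in the form allowing one $L^\infty$ input, applied to $(h,\Lambda^s g)\mapsto\Lambda^s\Pi_1$. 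Its symbol $\sum_j a_j(\xi)b_j(\eta)\langle\xi+\eta\rangle^s\langle\eta\rangle^{-s}$, with $a_j,b_j$ the symbols of $S_j,\Delta_j$, is of Coifman--Meyer type.

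Since the paper never uses these endpoints, the simplest repair is to restrict the lemma to $q_1,p_2<\infty$, which your argument proves completely.
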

	We apply this to obtain
	\begin{flalign*}
		\Vert \Lambda_x^s(\nabla_x\Delta_x^{-1}\rho\cdot f^\rho)(t, \cdot, v) \Vert_{L^p_x}\leq C(\Vert \Lambda_x^{s}\nabla_x\Delta_x^{-1}\rho(t) \Vert_{L^q_x}\Vert f^\rho(t, \cdot, v) \Vert_{L^r_x}+\Vert \nabla_x\Delta_x^{-1}\rho(t) \Vert_{L^\infty_x}\Vert \Lambda_x^s f^\rho(t,\cdot,  v) \Vert_{L^p_x})
	\end{flalign*}
	We choose $q$ and $r$ such that $\frac{1}{p}=\frac{1}{q}+\frac{1}{r}$ such that the following embedding holds:
	\begin{flalign*}
		H^{s',p+\epsilon'}(\mathbb{R}^d)\hookrightarrow H^{s, q}(\mathbb{R}^d)\qquad \text{and}\qquad H^{s,p}(\mathbb{R}^d)\hookrightarrow L^r(\mathbb{R}^d)
	\end{flalign*}
	This is satisfied if $\frac{1}{2p}-\epsilon-\frac{d}{p}>-\frac{d}{q}$ and $s-\frac{d}{p}>-\frac{d}{r}$. This is equivalent to $\frac{d}{p}-\frac{1}{2p}+\epsilon<\frac{d}{q}<s$ and this is possible since $s>\frac{d}{p}-\frac{1}{2p}$. Using this, we see that
	\begin{flalign*}
		\Vert \Lambda_x^s(\nabla_x\Delta_x^{-1}\rho \cdot f^\rho)(t, \cdot, v) \Vert_{L^p_x}\leq C\Vert \nabla_x\Delta_x^{-1}\rho(t) \Vert_{H^{s',p+\epsilon'}_x}\Vert \Lambda_x^s f^\rho(t, \cdot, v) \Vert_{L^p_x}
	\end{flalign*}
	Taking $p$-th power on both sides and then integrating in $v$ and time gives\footnote{We silently replaced $\Vert \Lambda_x^s f^\rho(t) \Vert_{L^p_{x,v}}$ by $\Vert \Lambda_x^{s,v} f^\rho(t) \Vert_{L^p_{x,v}}$. This is justified by Mikhlin multiplier theorem applied to Fourier multiplier operator $\Lambda_x^s\Lambda_{x,v}^{-s}$.}
	\begin{flalign*}
		&\int_0^T\Vert \Lambda_x^s(\nabla_x\Delta_x^{-1}\rho\cdot f^\rho)(t, \cdot, v) \Vert_{L^p_x}^p dt\\&\leq C\Vert f_0\Vert_{H^{s,p}_{x,v}}^p\exp(pCT+pCT^\frac{p\epsilon}{1+p\epsilon}R+pCT^{2-\frac{1}{p+\epsilon'}}R)\Vert \nabla_x\Delta_x^{-1}\rho \Vert_{L^p\left([0, T], H^{s', p+\epsilon'}_x\right)}^p\\
		&\leq C\Vert f_0\Vert_{H^{s,p}_{x,v}}^p\exp(pCT+pCT^\frac{p\epsilon}{1+p\epsilon}R+pCT^{2-\frac{1}{p+\epsilon'}}R)R^p
	\end{flalign*}
	
	Putting altogether, we obtain
	\begin{flalign*}
		&\int_0^T\Vert G[\rho](t) \Vert_{H^{s+\frac{1}{2p}-\epsilon,p}_x}^{1+p\epsilon} dt=\int_0^T\left\Vert \int_{\mathbb{R}^d}  f^\rho(x,v)dv\right\Vert^{1+p\epsilon}_{H^{s+\frac{1}{2p}-\epsilon, p}}dt\\&  \leq  C T^{(p\epsilon)^2} (1+Q^\frac{d(p-1)(1+p\epsilon)}{p}) \\&\times\left((1+T^{1-p\epsilon}) \Vert f_0\Vert_{H^{s,p}_{x,v}}\exp(CT+CT^{\frac{p\epsilon}{1+p\epsilon}}R+CT^{2-\frac{1}{p+\epsilon'}}R)+T^{1-\frac{1}{p}} \Vert \Lambda_x^s(\nabla_x\Delta_x^{-1}\rho f^\rho) \Vert_{L^p([0, T]\times \mathbb{R}^d\times\mathbb{R}^d)}\right)^{1+p\epsilon}\\
		&\leq  C T^{(p\epsilon)^2} (1+Q^\frac{d(p-1)(1+p\epsilon)}{p}) \\&\times\left((1+T^{1-p\epsilon}) \Vert f_0\Vert_{H^{s,p}_{x,v}}\exp(CT+CT^{\frac{p\epsilon}{1+p\epsilon}}R+CT^{2-\frac{1}{p+\epsilon'}}R)(1+T^{1-\frac{1}{p}}\Vert \nabla_x\Delta_x^{-1}\rho \Vert_{L^p\left([0, T], H^{s', p+\epsilon'}_x\right)})\right)^{1+p\epsilon}\\
		&\leq C T^{(p\epsilon)^2} (1+Q^\frac{d(p-1)(1+p\epsilon)}{p}) \\&\times\left((1+T^{1-p\epsilon}) \Vert f_0\Vert_{H^{s,p}_{x,v}}\exp(CT+CT^{\frac{p\epsilon}{1+p\epsilon}}R+CT^{2-\frac{1}{p+\epsilon'}}R)\right)^{1+p\epsilon}\left(1+T^{1-\frac{1}{p}}R\right)^{1+p\epsilon}.
	\end{flalign*}
	This can be made smaller than $(R/3)^p$ if we choose $T>0$ small enough.
	
	We also need to estimate
	\begin{flalign*}
		\int_0^T\Vert \nabla_x\Delta_x^{-1}G[\rho](t) \Vert^{p}_{H^{s', p+\epsilon'}}dt.
	\end{flalign*}
	This is obtained as follows. First, note that
	\begin{flalign*}
		&\left\Vert \nabla_x\Delta_x^{-1}\left( \int f^\rho(t, \cdot, v)dv \right) \right\Vert_{L^{p+\epsilon'}_x}\leq C\left\Vert  \int f^\rho(t, \cdot, v)dv  \right\Vert_{L^\frac{(p+\epsilon')d}{p+d+\epsilon'}_x} \\&\leq C \left\Vert  \int f^\rho(t, \cdot, v)dv  \right\Vert_{L^1_x}^\frac{\frac{1}{p}+\frac{1}{p'}}{1-\frac{1}{p}}\left\Vert  \int f^\rho(t, \cdot, v)dv  \right\Vert_{L^p_x}^\frac{1-\frac{1}{p'}}{1-\frac{1}{p}}\\
		&\leq \sup_{t\in[0, T]} \Vert f^\rho(t)\Vert_{L^1_{x,v}}^\frac{\frac{1}{p}+\frac{1}{p'}}{1-\frac{1}{p}} \left( Q^{d-\frac{d}{p}} \sup_{t\in[0, T]}\Vert f^\rho(t) \Vert_{L^p_{x,v}} \right)^\frac{1-\frac{1}{p'}}{1-\frac{1}{p}} \\&
		\leq  \Vert f_0\Vert_{L^1_{x,v}}^\frac{\frac{1}{p}+\frac{1}{p'}}{1-\frac{1}{p}} \left( Q^{d-\frac{d}{p}} \Vert f_0 \Vert_{L^{p}_{x,v}} \right)^\frac{1-\frac{1}{p'}}{1-\frac{1}{p}},
	\end{flalign*}
	Since we have
	\begin{flalign*}
		&\left\Vert \left(\nabla_x\Delta_x^{-1}\left( \int f^\rho(t, \cdot, v)dv \right)\right) \right\Vert_{H^{s', p+\epsilon'}_x} \leq C\left( \left\Vert \nabla_x\Delta_x^{-1}\left( \int f^\rho(t, \cdot, v)dv \right) \right\Vert_{L^{p+\epsilon'}_x}+\left\Vert \left( \int f^\rho(t, \cdot, v)dv \right) \right\Vert_{H^{s'-1, p+\epsilon'}_x} \right)\\
		&\leq  C\left( \left\Vert \nabla_x\Delta_x^{-1}\left( \int f^\rho(t, \cdot, v)dv \right) \right\Vert_{L^{p+\epsilon'}_x}+\left\Vert  \int f^\rho(t, \cdot, v)dv  \right\Vert_{H^{s, p}_x} \right)\\
		&\leq  C\left( \left\Vert \nabla_x\Delta_x^{-1}\left( \int f^\rho(t, \cdot, v)dv \right) \right\Vert_{L^{p+\epsilon'}_x}+  \int \left\Vert f^\rho(t, \cdot, v)\right\Vert_{H^{s, p}_x}dv \right) \\
		&\leq  C\left( \left\Vert \nabla_x\Delta_x^{-1}\left( \int f^\rho(t, \cdot, v)dv \right) \right\Vert_{L^{p+\epsilon'}_x}+ Q^{\frac{d(p-1)}{p}} \Vert f^\rho(t) \Vert_{H^{s,v}_{x}L^p_v} \right)\\ 
		&\leq C\left( \left\Vert \nabla_x\Delta_x^{-1}\left( \int f^\rho(t, \cdot, v)dv \right) \right\Vert_{L^{p+\epsilon'}_x}+ Q^{\frac{d(p-1)}{p}} \Vert f^\rho(t) \Vert_{H^{s,v}_{x,v}} \right) \\
		&\leq C\left( \Vert f_0\Vert_{L^1_{x,v}}^\frac{\frac{1}{p}+\frac{1}{p'}}{1-\frac{1}{p}} \left( Q^{d-\frac{d}{p}} \Vert f_0 \Vert_{L^{p}_{x,v}} \right)^\frac{1-\frac{1}{p'}}{1-\frac{1}{p}} + Q^{\frac{d(p-1)}{p}} \Vert f_0 \Vert_{H^{s,v}_{x,v}}\exp(CT+CT^{\frac{p\epsilon}{1+p\epsilon}}R+CT^{2-\frac{1}{p+\epsilon'}}R) \right),
	\end{flalign*}
	we have
	\begin{flalign*}
		&\int_0^T\Vert \nabla_x\Delta_x^{-1}G[\rho](t) \Vert^{p}_{H^{s', p+\epsilon'}}dt\\&\leq CT\left( \Vert f_0\Vert_{L^1_{x,v}}^\frac{\frac{1}{p}+\frac{1}{p'}}{1-\frac{1}{p}} \left( Q^{d-\frac{d}{p}} \Vert f_0 \Vert_{L^{p}_{x,v}} \right)^\frac{1-\frac{1}{p'}}{1-\frac{1}{p}} + Q^{\frac{d(p-1)}{p}} \Vert f_0 \Vert_{H^{s,v}_{x,v}}\exp(CT+CT^{\frac{p\epsilon}{1+p\epsilon}}R+CT^{2-\frac{1}{p+\epsilon'}}R) \right)^p,
	\end{flalign*}
	which can be made smaller than $\left(\frac{R}{3}\right)^p$ if we make $T$ small enough. 
	We finally estiamte
	\begin{flalign*}
		\int_0^T\Vert \partial_t\nabla_x\Delta_x^{-1}G[\rho](t) \Vert^{p+\epsilon'}_{L^{p+\epsilon'}_x}dt.
	\end{flalign*}
	To do this, we first observe that
	\begin{flalign*}
		\partial_t f^\rho+v\cdot\nabla_x f^\rho-\nabla_x\Delta_x^{-1}\rho\cdot\nabla_v f=0.
	\end{flalign*}
	Integrating in $v$ gives
	\begin{flalign*}
		\partial_t G[\rho]+\int_{\vert v\vert\leq Q} v\cdot \nabla_x f^\rho dv=0.
	\end{flalign*}
	We operate $\nabla_x\Delta_x^{-1}$ and take $L^{p+\epsilon'}$ norm on both sides to obtain
	\begin{flalign*}
		\Vert \partial_t\nabla_x\Delta_x^{-1}\nabla_x G[\rho](t) \Vert_{L^{p+\epsilon'}_x}&=\left\Vert \int_{\vert v\vert \leq Q} v\cdot \nabla_x\Delta_x^{-1}\nabla_x f^\rho(t, \cdot, v) dv \right\Vert_{L^{p+\epsilon'}_x}\leq  \int_{\vert v\vert \leq Q}\vert v\vert \left\Vert  \nabla_x\Delta_x^{-1}\nabla_x f^\rho(t, \cdot, v)\right\Vert_{L^{p+\epsilon'}_x} dv \\& \leq CQ\int_{\vert v\vert \leq Q} \Vert f^\rho(t, \cdot, v)\Vert_{L^{p+\epsilon'}_x}dv\leq CQ^{1+d(1-\frac{1}{p+\epsilon'})}\Vert f^\rho\Vert_{L^{p+\epsilon'}_{x,v}}\\&\leq CQ^{1+d(1-\frac{1}{p+\epsilon'})}\Vert f^\rho\Vert_{H^{s-\frac{d\epsilon'}{p(p+\epsilon')}, p+\epsilon'}_{x,v}}\leq CQ^{1+d(1-\frac{1}{p+\epsilon'})}\Vert f^\rho\Vert_{H^{s, p}_{x,v}}\\&\leq CQ^{1+d(1-\frac{1}{p+\epsilon'})}\Vert f_0\Vert_{H^{s, p}_{x,v}}\exp(CT+CT^{\frac{\epsilon p}{1+\epsilon p}}R+CT^{2-\frac{1}{p+\epsilon'}}R)
	\end{flalign*}
	Therefore, we have
	\begin{flalign*}
		\left(\int_0^T\Vert \partial_t\nabla_x\Delta_x^{-1}\rho(t) \Vert^{p+\epsilon'}_{L^p_x}dt\right)^\frac{1}{p+\epsilon'}\leq CT^\frac{1}{p+\epsilon'}Q^{1+d(1-\frac{1}{p+\epsilon'})}\Vert f_0\Vert_{H^{s, p}_{x,v}}\exp(CT+CT^{1-\frac{1}{p}}R+CT^{2-\frac{1}{p+\epsilon'}}R)
	\end{flalign*}
	and can be made smaller than $\frac{R}{3}$ if we make $T$ small enough. 
	
	Combidning these, we see that $\Vert G[\rho] \Vert_V\leq \frac{R}{3}+\frac{R}{3}+\frac{R}{3}\leq R$. Finally, by construction, we have that $G[\rho](0)=\int_{\mathbb{R}^d} f_0(\cdot, v)dv$. Therefore, we see that $G$ maps $K$ into itself.
	
	We also note that the estimates above also serve as a proof for non emptiness of $K$. Namely, we can specify the element in the set $$\left\{\rho\in V: \rho(0, x)=\int_{\mathbb{R}^d}f_0(x, v)dv \right\}.$$	
	The element is $\rho(x, t)=\int_{\mathbb{R}^d} f_0(x-vt, v)dv$. Notice that if we extend $G$ to whole $V$ with the same formula, we see that the element is just $G[0]$. Therefore, with some minor modifications, we can apply the a priori estimates obtained above to conclude that $\rho\in V$. Therefore, $K$ is nonempty and we are now free with non-emptiness issue of $K$.
	
	\subsection{Continuity of $G$}
	In this section, we prove the continuity of $G$. To do this, we suppose that the sequence
	$\rho_n$ in $K$ converges to $\rho\in K$ in the topology of $X$. Then, the following holds:
	
	\begin{enumerate}
		\item $\rho_n \rightharpoonup \rho\quad \text{in}\quad L^{1+\epsilon p}([0, T], H^{s+\frac{1}{2p}-\epsilon,p}(\mathbb{R}^d))$,\\
		\item $\nabla_x\Delta_x^{-1}\rho_n \rightharpoonup \nabla_x\Delta_x^{-1}\rho\quad \text{in}\quad L^{p}([0, T], H^{s', p+\epsilon'}(\mathbb{R}^d))$,\\
		\item $\partial_t\nabla_x\Delta_x^{-1}\rho_n \rightharpoonup \partial_t\nabla_x\Delta_x^{-1}\rho\quad \text{in}\quad L^{p+\epsilon'}([0, T], L^{p+\epsilon'}(\mathbb{R}^d))$.
	\end{enumerate}
	The first fact implies that 
	\begin{flalign*}
		\nabla_x^2\Delta_x^{-1}\rho_n \rightharpoonup \nabla_x^2\Delta_x^{-1}\rho\quad \text{in}\quad L^{1+\epsilon p}([0, T], H^{s+\frac{1}{2p}-\epsilon,p}(\mathbb{R}^d)),
	\end{flalign*}
	which leads to
	\begin{flalign*}
		\nabla_x^2\Delta_x^{-1}\rho_n \rightharpoonup \nabla_x^2\Delta_x^{-1}\rho\quad \text{in}\quad L^{1+\epsilon p}([0, T], H^{s',p+\epsilon'}(\mathbb{R}^d)).
	\end{flalign*}
	The second fact implies that
	\begin{flalign*}
		\nabla_x\Delta_x^{-1}\rho_n(x, t)=\nabla_x\Delta_x^{-1}\int f_0(x, v)dv+\int_0^t \partial_\tau\nabla_x\Delta_x^{-1}\rho_n(x, \tau)d\tau
	\end{flalign*}
	converges weakly to
	\begin{flalign*}
		\nabla_x\Delta_x^{-1}\rho(x, t)=\nabla_x\Delta_x^{-1}\int f_0(x, v)dv+\int_0^t \partial_\tau\nabla_x\Delta_x^{-1}\rho(x, \tau)d\tau
	\end{flalign*}
	in $L^{1+\epsilon p}([0, T], L^{p+\epsilon'}(\mathbb{R}^d))$. Therefore, we see that
	\begin{flalign*}
		\nabla_x\Delta_x^{-1}\rho_n(x, t)\rightharpoonup \nabla_x\Delta_x^{-1}\rho(x, t)\quad \text{in}\quad L^{1+\epsilon p}([0, T], H^{s'+1,p+\epsilon'}(\mathbb{R}^d))
	\end{flalign*}
	
	For any compact subset $O\subset\mathbb{R}^d$, we see that the following inclusions are compact:
	\begin{flalign*}
		H^{s'+1, p+\epsilon'}(O)\hookrightarrow H^{s'+1-\delta, p+\epsilon'}(O)\hookrightarrow L^{p+\epsilon'}(O)
	\end{flalign*}
	where $\delta>0$ is sufficiently small. Then, by the Aubin-Lions Lemma, we see that
	\begin{flalign*}
		\nabla_x\Delta_x^{-1}\rho_n \to \nabla_x\Delta_x^{-1}\rho\quad \text{in}\quad L^{1+\epsilon p}([0, T], H^{s'-\delta,p+\epsilon'}(O)).
	\end{flalign*}
	Since $\delta>0$ is sufficiently small, we see that
	\begin{flalign*}
		\nabla_x\Delta_x^{-1}\rho_n \to \nabla_x\Delta_x^{-1}\rho\quad \text{in}\quad L^{1+\epsilon p}([0, T], C^1(O)).
	\end{flalign*}
	Now, we are ready to prove the (weak) convergence of $G[\rho_n]$ to $G[\rho]$.
	First, since $\Vert G[\rho_n]\Vert_V\leq R$, we see by Banach-Alaoglu theorem that there exists a convergent subsequence. We show that such limit must be $G[\rho]$. For then it follows that any convergent subsequence must have the same limit, which implies that whole sequence must converge to $G[\rho]$, and the continuity is proven.
	
	It remains to prove that the limit is $G[\rho]$. We first prove that $f^{\rho_n}$ converges to $f^{\rho}$ in sense of distributions. To do this, we first prove that the inverse flow maps $(\Phi^{\rho_n}_t)^{-1}$ converges locally uniformly to $(\Phi^\rho_t)^{-1}$. We state these facts as a lemma.
	\begin{lemma}
		The flow maps $\Phi^{\rho_n}_t$, as well as its inverse $(\Phi^{\rho_n}_t)^{-1}$ converges locally uniformly. That is, for any compact subset $O$, we have that
		\begin{flalign*}
			&\Phi^{\rho_n}_t \to \Phi^{\rho}_t\quad \text{uniformly on}\;O,\;\text{and}\\
			&(\Phi^{\rho_n}_t)^{-1} \to (\Phi^{\rho}_t)^{-1}\quad \text{uniformly on}\;O.
		\end{flalign*}
	\end{lemma}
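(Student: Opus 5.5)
The plan is a Gronwall comparison of characteristics, driven by the strong local convergence $E_n:=\nabla_x\Delta_x^{-1}\rho_n\to E:=\nabla_x\Delta_x^{-1}\rho$ in $L^{1+\epsilon p}([0,T],C^1(O))$ obtained just above from Aubin--Lions. We also use the uniform bound $\|E_n\|_{L^1([0,T],C^1(\mathbb{R}^d))}\le C(R,T)$ coming from $\rho_n\in K$, as in the well-definedness argument.

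\textbf{Step 1: confinement.} Fix a compact $O\subset\mathbb{R}^{2d}$, say $O\subset\{|x|\le A,|v|\le A\}$. By the velocity bound in the self-mapping section, $|V^{\rho_n}(t,x,v)|\le A+C(R,T)$ uniformly in $n$. Hence $|X^{\rho_n}(t,x,v)|\le A+T(A+C)$, so all trajectories starting in $O$ stay in a fixed compact set $O'=\{|x|\le A'\}\times\{|v|\le A'\}$ for $t\in[0,T]$. The same holds for the backward flow $(\Phi_t^{\rho_n})^{-1}$: it is the flow of the same vector field run backwards from time $t$ to $0$, and the same triangle-inequality bound applies.

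\textbf{Step 2: forward flows.} Write $Z_n(t)=\Phi^{\rho_n}_t(z)$ and $Z(t)=\Phi^\rho_t(z)$ for $z\in O$. Then
\[
|Z_n(t)-Z(t)|\le\int_0^t |Z_n-Z|\,d\tau+\int_0^t |E_n(\tau,X_n)-E(\tau,X)|\,d\tau .
\]
We split the last integrand as
\[
|E_n(\tau,X_n)-E(\tau,X_n)|+|E(\tau,X_n)-E(\tau,X)|\le \|E_n(\tau)-E(\tau)\|_{C(O')}+\|\nabla E(\tau)\|_{L^\infty}|X_n-X|.
\]
Gronwall then gives
\[
\sup_{z\in O,\,t\le T}|Z_n-Z|\le \|E_n-E\|_{L^1([0,T],C(O'))}\exp\Big(T+\int_0^T\|\nabla E\|_{L^\infty}\Big)\to0 .
\]
Only the $L^1_tC^1$ regularity of the limit field $E$ is used here, not uniform Lipschitz bounds on $E_n$.

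\textbf{Step 3: inverse flows.} Rather than invert abstractly, we write $(\Phi_t^{\rho_n})^{-1}(z)=\Psi_n(0)$, where $\Psi_n$ solves the same ODE backward in time from $\Psi_n(t)=z$. The identical Gronwall argument on $[0,t]$, using the confinement from Step 1, gives uniform convergence on $O$, uniformly in $t\in[0,T]$. As a fallback, one can combine the forward convergence with the uniform Lipschitz bound $\mathrm{Lip}((\Phi_t^{\rho_n})^{-1})\le\exp(T+C(R,T))$, which holds by the uniform $L^1_tC^1$ bound on $E_n$. Then
\[
|\Phi_n^{-1}(z)-\Phi^{-1}(z)|=|\Phi_n^{-1}(z)-\Phi_n^{-1}(\Phi(\Phi^{-1}z))|\le \mathrm{Lip}\cdot|\Phi(w)-\Phi_n(w)|, \qquad w=\Phi^{-1}(z),
\]
where $w$ ranges over a compact set.

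\textbf{Main obstacle.} The delicate point is that the convergence $E_n\to E$ is only local in $x$ and only $L^{1+\epsilon p}$ in time. Step 1's uniform confinement of trajectories into a fixed compact $O'$ is exactly what allows the local Aubin--Lions convergence to be used; I would check carefully that $A'$ does not depend on $n$.
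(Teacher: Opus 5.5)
Your proof is correct and has the same skeleton as the paper's. First you confine trajectories from $O$ to a compact $O'$ independent of $n$. Then you run a Gronwall comparison driven by $E_n\to E$ in $L^{1+\epsilon p}([0,T],C^1(O'))$. There are two small differences.

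\emph{Forward flows.} You split $E_n(X_n)-E(X)$ through $E(X_n)$, so the Lipschitz constant falls on the limit field $E$. The paper splits through $E_n(X)$ and uses the uniform bound on $\int_0^T\|E_n\|_{C^1(O')}$. Both work. Yours needs uniform-in-$n$ information on $E_n$ only for the confinement step.

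\emph{Inverse maps.} The paper first bounds $\nabla_{x,v}\Phi^{\rho_n}_t$ uniformly by Gronwall. It transfers this to $(\Phi^{\rho_n}_t)^{-1}$ via the cofactor formula and unit Jacobian. It then applies forward convergence at $w=(\Phi^\rho_t)^{-1}(z)$, using a footnote on joint continuity to keep $w$ in a compact set. That is exactly your fallback, except that you obtain the Lipschitz bound on the inverse directly from the backward ODE, which is cleaner.

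Your primary route instead views $(\Phi^{\rho_n}_t)^{-1}(z)$ as the time-$0$ value of the backward characteristic issued from $(t,z)$ and reruns the same Gronwall estimate. This avoids Lipschitz bounds on the inverses and incompressibility altogether, and gives convergence uniform in $t\in[0,T]$. The only extra input is backward confinement, which, as you note, follows from the same triangle-inequality bound.
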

	\begin{proof}
		We first show that for any compact subset $O$, $(x, v)\in O$ cannot escape another compact subset $O'$ during $t\in[0,T]$ along any flow $\Phi^{\rho_n}$ and $\Phi^{\rho}$. For this, suppose that $O\subset \{\vert x\vert \leq B, \vert v\vert \leq B\}$. Then, We first have $$\vert V^{\rho_n}(t, x, v) \vert\leq \vert v\vert+CRT^{\frac{\epsilon p}{1+\epsilon p}}+CT(1+T^{1-\frac{1}{p+\epsilon'}}R)$$ regardless of $x$ and consequently,
		\begin{flalign*}
			\vert X^{\rho_n}(t, x, v) \vert \leq \vert x \vert +\int_{0}^T \vert V^{\rho_n}(t, x,v )\vert dt \leq \vert x\vert +T\vert v\vert +CRT^\frac{1+2\epsilon p}{1+\epsilon p}+CT^2(1+T^{1-\frac{1}{p+\epsilon'}}R).
		\end{flalign*}
		Thus, if $(x,v)\in O$, then we have
		\begin{flalign*}
			\vert X^{\rho_n}(t, x, v) \vert \leq (1+T)B+CRT^{\frac{1+2\epsilon p}{1+\epsilon p}}+CT^2(1+T^{1-\frac{1}{p+\epsilon'}}R)
		\end{flalign*}
		and
		\begin{flalign*}
			\vert V^{\rho_n}(t, x, v) \vert\leq B+CRT^{\frac{\epsilon p}{1+\epsilon p}}+CT(1+T^{1-\frac{1}{p+\epsilon'}}R),
		\end{flalign*}
		which shows that the flow cannot escape
		\begin{flalign*}
			O'=\{ \vert x\vert \leq (1+T)B+CRT^{\frac{1+2\epsilon p}{1+\epsilon p}}+CT^2(1+T^{1-\frac{1}{p+\epsilon'}}R), \vert v\vert \leq B+CRT^{\frac{\epsilon p}{1+\epsilon p}}+CT(1+T^{1-\frac{1}{p+\epsilon'}}R) \}.
		\end{flalign*}
		Same argument holds also for $\Phi^\rho$.
		
		Now, observe that for $(x,v)\in O$,
		\begin{flalign*}
			&\vert\Phi^{\rho_n}(t, x, v)-\Phi^\rho(t,x,v)\vert\\&\leq \int_0^t \left\vert \left(V^{\rho_n}(s,x,v)-V^\rho(s,x,v), \nabla_x\Delta_x^{-1}\rho_n(s,\Phi^{\rho_n}(s,x,v))-\nabla_x\Delta_x^{-1}\rho(s,\Phi^{\rho}(s,x,v)) \right) \right\vert ds\\&\leq \int_0^t\left\vert V^{\rho_n}(s,x,v)-V^\rho(s,x,v) \right\vert ds+\int_0^t \left\vert \nabla_x\Delta_x^{-1}\rho_n(s,\Phi^{\rho_n}(s,x,v))-\nabla_x\Delta_x^{-1}\rho_n(s,\Phi^{\rho}(s,x,v)) \right\vert ds\\&+\int_0^t\left\vert \nabla_x\Delta_x^{-1}\rho_n(s,\Phi^\rho(s,x,v))-\nabla_x\Delta_x^{-1}\rho(s,\Phi^{\rho}(s,x,v)) \right\vert ds \\&\leq \int_0^t\left\vert V^{\rho_n}(s,x,v)-V^\rho(s,x,v) \right\vert ds+\int_0^t  \Vert \nabla_x\Delta_x^{-1}\rho_n(s) \Vert_{C^1(O')}\vert \Phi^{\rho_n}(s,x,v)-\Phi^\rho(s,x,v) \vert  ds\\&+\int_0^t\left\Vert \nabla_x\Delta_x^{-1}\rho_n(s)-\nabla_x\Delta_x^{-1}\rho(s) \right\Vert_{L^\infty(O')} ds 
		\end{flalign*}
		Therefore, by Gronwall's inequality, we have
		\begin{flalign*}
			\vert\Phi^{\rho_n}(t, x, v)-\Phi^\rho(t,x,v)\vert&\leq \int_0^T\left\Vert \nabla_x\Delta_x^{-1}\rho_n(s)-\nabla_x\Delta_x^{-1}\rho(s) \right\Vert_{L^\infty(O')} ds \\&\exp\left(\int_0^T (1+\Vert \nabla_x\Delta_x^{-1}\rho_n(s) \Vert_{C^1(O')}) ds\right)
		\end{flalign*}
		The term in the exponential is bounded by $(1+CR)T$ while the term $$\int_0^T\left\Vert \nabla_x\Delta_x^{-1}\rho_n(s)-\nabla_x\Delta_x^{-1}\rho(s) \right\Vert_{L^\infty(O')} ds$$ converges to zero since 
		\begin{flalign*}
			\nabla_x\Delta_x^{-1}\rho_n \to \nabla_x\Delta_x^{-1}\rho\quad \text{in}\quad L^{1+\epsilon p}([0, T], C^1(O')).
		\end{flalign*}
		Therefore, the flow map converges uniformly on $O$ for all $t\in[0,T]$.
		
		For the uniform convergence of the inverse flow map, we first need the Lipschitz bound for the flow map. This is again obtained via Gronwall's inequality. We have
		\begin{flalign*}
			\nabla_{x,v}\Phi^{\rho_n}(t, x, v)=I+\int_0^t [\nabla_{x,v}(v, -\nabla_x\Delta_x^{-1}\rho_n)\circ\Phi^{\rho_n}(s,x,v)]\nabla_{x,v}\Phi^{\rho_n}(s, x, v) ds,
		\end{flalign*}
		which implies
		\begin{flalign*}
			\Vert \nabla_{x,v}\Phi^{\rho_n}(t) \Vert_{L^\infty(\mathbb{R}^d)} \leq C+\int_0^t (1+\Vert\nabla^2\Delta_x^{-1}\rho_n(s)\Vert_{L^\infty(\mathbb{R}^d)} )\Vert \nabla_{x,v}\Phi^{\rho_n}(s) \Vert_{L^\infty(\mathbb{R}^d)}ds.
		\end{flalign*}
		Applying the Gronwall inequality gives
		\begin{flalign*}
			\Vert \nabla_{x,v}\Phi^{\rho_n}(t) \Vert_{L^\infty(\mathbb{R}^d)} \leq C\exp(T+\Vert\nabla^2\Delta_x^{-1}\rho_n\Vert_{L^1([0, T], L^\infty(\mathbb{R}^d))})\leq C\exp(T+CRT^{\frac{\epsilon p}{1+\epsilon p}})
		\end{flalign*}
		Now, note that by the formula of the inverse matrix and the fact that our flows have unit Jacobian determinant, we see that $\Vert \nabla_{x,v}(\Phi^{\rho_n}(t))^{-1} \Vert_{L^\infty(\mathbb{R}^d)}$ are also uniformly bounded as well.
		
		Finally, we prove the uniform convergence of the inverse flow map. To show this, we see that for any compact subset $O$ and for $(x,v)\in O$ that
		\begin{flalign*}
			\vert (\Phi_t^{\rho_n})^{-1}(x, v)-(\Phi_t^\rho)^{-1}(x, v) \vert &\leq \Vert \nabla_{x,v}(\Phi^{\rho_n}(t))^{-1} \Vert_{L^\infty(\mathbb{R}^d)}\vert \Phi^{\rho_n}(t,(\Phi_t^{\rho_n})^{-1}(x, v))-\Phi^{\rho_n}(t, (\Phi_t^\rho)^{-1}(x, v)) \vert \\&\leq \Vert\nabla_{x,v}(\Phi^{\rho_n}(t))^{-1} \Vert_{L^\infty(\mathbb{R}^d)}\vert \Phi^{\rho}(t,(\Phi_t^{\rho})^{-1}(x, v))-\Phi^{\rho_n}(t, (\Phi_t^\rho)^{-1}(x, v)) \vert
		\end{flalign*}
		The first term is uniformly bounded, while the second term converges to zero as $n\to\infty$ because $(\Phi_t^{\rho})^{-1}(x, v)$ is trapped compact subset.\footnote{To see this rigorously, define the map $\Psi(t, x, v)=(\Phi_t^{\rho})^{-1}(x, v)$. This map is continuous jointly in variables. Now, we see that $(\Phi_t^{\rho})^{-1}(x, v)\in \Psi([0, T]\times O)$, which is compact.} The proof of the Lemma is complete.
	\end{proof}
	Now, we prove that $f^{\rho_n}$ converges to $f^{\rho}$ in sense of distributions. We first take $g\in C^\infty_c(\mathbb{R}^d)$ such that $\Vert f_0-g \Vert_{L^p(\mathbb{R}^d)}<\epsilon$. Then, we have for any compact subset $O$ that
	\begin{flalign*}
		&\Vert f^{\rho_n}(t)-f^\rho(t) \Vert_{L^p(O)}=\Vert f_0\circ(\Phi_t^{\rho_n})^{-1}-f_0\circ(\Phi_t^{\rho})^{-1} \Vert_{L^p(O)}\\&\leq \Vert f_0\circ(\Phi_t^{\rho_n})^{-1}-g\circ(\Phi_t^{\rho_n})^{-1} \Vert_{L^p(O)}+\Vert g\circ(\Phi_t^{\rho_n})^{-1}-g\circ(\Phi_t^{\rho})^{-1} \Vert_{L^p(O)}+\Vert g\circ(\Phi_t^{\rho_n})^{-1}-f_0\circ(\Phi_t^{\rho})^{-1} \Vert_{L^p(O)}\\&\leq 2\epsilon+\Vert g\circ(\Phi_t^{\rho_n})^{-1}-g\circ(\Phi_t^{\rho})^{-1} \Vert_{L^p(O)}
	\end{flalign*}
	We have used that our flow is measure preserving in the last line. By the fact that $g$ is smooth and compactly supported and that $(\Phi^{\rho_n}_t)^{-1} \to (\Phi^{\rho}_t)^{-1}$ uniformly on $O$, we see that the last term converges to zero as $n\to\infty$. Since $\epsilon>0$ was arbitrary, we see that $$f^{\rho_n}\to f^\rho\quad\text{on}\quad  L^p_{loc}(\mathbb{R}^d)$$ and in particular, converges in sense of distributions.
	
	Thus, since $f^{\rho_n}$ and $f$ are compactly supported in $v$, we see that $G[\rho_n]=\int_\mathbb{R}^d f^{\rho_n}(t, x, v) dv$ converges to $G[\rho]=\int_\mathbb{R}^d f^{\rho}(t, x, v) dv$ in sense of distributions, which means that the limit is the same for all subsequences. Therefore, we have shown the desired result, and the proof for continuity of $G$ is complete.
	\subsection{Existence of solution}
	We can now say that $G$ has a fixed point by the Schauder-Tychonoff fixed point theorem. Let $\rho$ be the fixed point. We can see that $f^\rho$ is the solution for the Vlasov-Poisson equation. This completes the proof of the existence.
	\section{Uniqueness and Continuity of Solutions}\label{unique}
	In this section, we prove the uniqueness and continuity of solutions. 
	\subsection{Uniqueness}
	To show uniqueness, we first need to show that every solution of the equation is transported along the flow. The first step is to show the well definedness of the flow of the vector field $\left(v, -\nabla_x\Delta_x^{-1}\int f(t, x, v)dv \right)$. Note that direct application of a priori estimates is impossible since we have assumed the regularity of $\rho$ and then showed the regularity of $f$. In this situation, we have to show the regularity of $\rho$ by assuming the regularity of $f$.
	First, we have 
	Suppose that we have a solution $f\in L^\infty([0, T], H^{s, p}\cap L^1(\mathbb{R}^d\times\mathbb{R}^d))$ of \eqref{eq:VP} such that $f(t, x, v)=0$ for $\vert v\vert \geq Q$. We have to obtain the regularity of $$\nabla_x\Delta_x^{-1}\left( \int f(t, \cdot, v)dv \right).$$
	
	First, we have
	\begin{flalign*}
		&\left\Vert \nabla_x\Delta_x^{-1}\left( \int f(t, \cdot, v)dv \right) \right\Vert_{L^{p+\epsilon'}_x}\leq C\left\Vert  \int f(t, \cdot, v)dv  \right\Vert_{L^\frac{(p+\epsilon')d}{p+d+\epsilon'}_x} \\&\leq C \left\Vert  \int f(t, \cdot, v)dv  \right\Vert_{L^1_x}^\frac{\frac{1}{p}+\frac{1}{p'}}{1-\frac{1}{p}}\left\Vert  \int f(t, \cdot, v)dv  \right\Vert_{L^p_x}^\frac{1-\frac{1}{p'}}{1-\frac{1}{p}}\\
		&\leq \sup_{t\in[0, T]} \Vert f(t)\Vert_{L^1_{x,v}}^\frac{\frac{1}{p}+\frac{1}{p'}}{1-\frac{1}{p}} \left( Q^{d-\frac{d}{p}} \sup_{t\in[0, T]}\Vert f(t) \Vert_{L^p_{x,v}} \right)^\frac{1-\frac{1}{p'}}{1-\frac{1}{p}} \\&
		\leq \sup_{t\in[0, T]} \Vert f(t)\Vert_{L^1_{x,v}}^\frac{\frac{1}{p}+\frac{1}{p'}}{1-\frac{1}{p}} \left( Q^{d-\frac{d}{p}} \sup_{t\in[0, T]}\Vert f(t) \Vert_{H^{s,p}_{x,v}} \right)^\frac{1-\frac{1}{p'}}{1-\frac{1}{p}},
	\end{flalign*}
	which is uniformly bounded in $t$. Here again, $p'=\frac{(p+\epsilon')d}{p+\epsilon'+d}$. 
	
	We also have by Lemma \ref{Averaging}, Lemma \ref{Kato Ponce}, and the Sobolev embeddings we have used in the existence proof, that
	\begin{flalign*}
		& \int_0^T\left\Vert \int_{\mathbb{R}^d}  f(x,v)dv\right\Vert^{1+p\epsilon}_{H^{s', p+\epsilon'}(\mathbb{R}^d)}dt \leq C\int_0^T\left\Vert \int_{\mathbb{R}^d}  f(x,v)dv\right\Vert^{1+p\epsilon}_{H^{s+\frac{1}{2p}-\epsilon, p}(\mathbb{R}^d)}dt\\&\leq C\int_0^T\left\Vert \int_{\mathbb{R}^d} \Lambda_{x}^s fdv\right\Vert^{1+p\epsilon }_{H^{\frac{1}{2p}-\epsilon, p}(\mathbb{R}^d)}dt\leq C \int_0^T\left\Vert \int_{\mathbb{R}^d} \Lambda_{x}^s f(x,v)dv\right\Vert^{1+p\epsilon}_{W^{\frac{1}{2p}, p}(\mathbb{R}^d)}dt\\
		&\leq C T^{(p\epsilon)^2} (1+Q^\frac{d(p-1)(1+p\epsilon)}{p}) \\&\times\left((1+T^{1-p\epsilon}) \Vert f(x,v)\Vert_{L^\infty([0, T], H^{s,p}(\mathbb{R}^d\times\mathbb{R}^d))}+T^{1-\frac{1}{p}} \Vert \Lambda_x^s(\nabla_x\Delta_x^{-1}\rho f) \Vert_{L^p([0, T]\times \mathbb{R}^d\times\mathbb{R}^d)}\right)^{1+p\epsilon}\\
		&\leq C T^{(p\epsilon)^2} (1+Q^\frac{d(p-1)(1+p\epsilon)}{p}) \\&\times\left((1+T^{1-p\epsilon}) \Vert f(x,v)\Vert_{L^\infty([0, T], H^{s,p}(\mathbb{R}^d\times\mathbb{R}^d))}\right)^{1+p\epsilon}\left( 1+T^{1-\frac{1}{p}}\Vert \nabla_x\Delta_x^{-1}\rho \Vert_{L^p\left([0, T], H^{s', p+\epsilon'}_x\right)} \right)^{1+p\epsilon}\\
		&\leq C T^{(p\epsilon)^2} (1+Q^\frac{d(p-1)(1+p\epsilon)}{p}) \\&\times\left((1+T^{1-p\epsilon}) \Vert f(x,v)\Vert_{L^\infty([0, T], H^{s,p}(\mathbb{R}^d\times\mathbb{R}^d))}\right)^{1+p\epsilon}\left( 1+T\Vert \nabla_x\Delta_x^{-1}\rho \Vert_{L^\infty\left([0, T], H^{s', p+\epsilon'}_x\right)} \right)^{1+p\epsilon}
	\end{flalign*}
	holds. To bound the last term, we simply use
	\begin{flalign*}
		&\left\Vert \left(\nabla_x\Delta_x^{-1}\left( \int f(t, \cdot, v)dv \right)\right) \right\Vert_{H^{s', p+\epsilon'}_x} \leq C\left( \left\Vert \nabla_x\Delta_x^{-1}\left( \int f(t, \cdot, v)dv \right) \right\Vert_{L^{p+\epsilon'}_x}+\left\Vert \left( \int f(t, \cdot, v)dv \right) \right\Vert_{H^{s'-1, p+\epsilon'}_x} \right)\\
		&\leq  C\left( \left\Vert \nabla_x\Delta_x^{-1}\left( \int f(t, \cdot, v)dv \right) \right\Vert_{L^{p+\epsilon'}_x}+\left\Vert  \int f(t, \cdot, v)dv  \right\Vert_{H^{s, p}_x} \right)\\
		&\leq  C\left( \left\Vert \nabla_x\Delta_x^{-1}\left( \int f(t, \cdot, v)dv \right) \right\Vert_{L^{p+\epsilon'}_x}+  \int \left\Vert f(t, \cdot, v)\right\Vert_{H^{s, p}_x}dv \right) \\
		&\leq  C\left( \left\Vert \nabla_x\Delta_x^{-1}\left( \int f(t, \cdot, v)dv \right) \right\Vert_{L^{p+\epsilon'}_x}+ Q^{\frac{d(p-1)}{p}} \Vert f(t) \Vert_{H^{s,v}_{x}L^p_v} \right)\\ 
		&\leq C\left( \left\Vert \nabla_x\Delta_x^{-1}\left( \int f(t, \cdot, v)dv \right) \right\Vert_{L^{p+\epsilon'}_x}+ Q^{\frac{d(p-1)}{p}} \Vert f(t) \Vert_{H^{s,v}_{x,v}} \right) \\
		&\leq C\left(\sup_{t\in[0, T]} \Vert f(t)\Vert_{L^1_{x,v}}^\frac{\frac{1}{p}+\frac{1}{p'}}{1-\frac{1}{p}} \left( Q^{d-\frac{d}{p}} \sup_{t\in[0, T]}\Vert f(t) \Vert_{H^{s,p}_{x,v}} \right)^\frac{1-\frac{1}{p'}}{1-\frac{1}{p}} + Q^{\frac{d(p-1)}{p}} \Vert f(t) \Vert_{H^{s,v}_{x,v}} \right)
	\end{flalign*}
	whose right hand side is uniformly bounded by assumption on $f$. Here, we used the fact that $f(t, x, v)=0$ for $\vert v\vert\geq Q$ in the fourth line. Combining these results, we see that the vector field $\nabla_x\Delta_x^{-1}\left( \int f(t, \cdot, v)dv \right)$ is in the class $L^p([0, T], H^{s'+1, p+\epsilon'}(\mathbb{R}^d))$ and in particular, in the class $L^1([0, T], C^1(\mathbb{R}^d))$. Therefore, we see that the flow map generated by $\left(v, -\nabla_x\Delta_x^{-1}\int f(t, x, v)dv \right)$ is well defined.
	
	Now, we show that the solution is actually advected by the flow. We observe that two functions $f$ and $f_0\circ\Phi_t^{-1}$ are both $L^p$ (weak) solutions to the following transport equation:
	\begin{flalign*}
		\partial_t h(t, x, v)+v\cdot \nabla_x(t, x, v) h-\nabla_x\Delta_x^{-1}\int_{\mathbb{R}^d}f(t, x, v)dv\cdot \nabla_v h(t, x, v)=0.
	\end{flalign*}
	By the uniqueness of $L^p$ solutions to the transport equation with Lipschitz velocity, we see that $f=f_0\circ\Phi_t^{-1}$, which show that the claim is true.
	
	We now prove uniqueness.\footnote{Note that the above argument does not show the uniqueness, since the velocity field itself depends on the solution, which can be different. All we have shown above is that the existing solutions are advected by their own flow.} It follows the proof of Theorem 2.1 in the paper \cite{Loeper}, with minor modifications.
	Suppose that there are two solutions, $f_1$ and $f_2$, with initial data $f_0$. For simplicity, we assume from now on that $f_{0} \ge 0$. It is clarified in \cite[Section 4]{Loeper} that the non-negativity assumption is not necessary at all. Let $$\Phi_i(t, x, v)=(X_i(t, x, v),V_i(t, x, v))$$ be the corresponding characteristic flow for $i=1,2$. We aim to estimate the following quantity:
	\begin{flalign*}
		P(t):=\frac{1}{2}\iint  f_0(x, v) \vert \Phi_1(t, x, v)-\Phi_2(t, x, v) \vert^2 dvdx.
	\end{flalign*}
	Differentiating in time, we have
	\begin{flalign*}
		\frac{d}{dt}P(t)&=\iint f_0(x, v)(\Phi_1(t, x, v)-\Phi_2(t, x, v))\cdot(\partial_t \Phi_1(t, x, v)-\partial_t\Phi_2(t, x, v))\\&
		=\iint f_0(x, v) (X_1(t, x, v)-X_2(t, x, v))\cdot(V_1(t, x, v)-V_2(t, x, v))dvdx \\&-\iint f_0(x, v)(V_1(t, x, v)-V_2(t, x, v))\cdot(\nabla_x \Delta_x^{-1}\rho_1(t, X_1(t, x, v))-\nabla_x\Delta_x^{-1}\rho_2(t, X_2(t, x, v))) dvdx \\& =: A_1+A_2. 
	\end{flalign*}
	By the Cauchy--Schwarz inequality, $\vert A_1\vert\leq P(t)$. On the other hand, $A_2$ is bounded by 
	\begin{flalign*}
	&\left[ \iint f_0(x, v)\vert V_1(t, x, v)-V_2(t, x, v) \vert^2 dvdx  \right]^\frac{1}{2} \\&\times \left[\iint f_0(x, v)\vert \nabla_x\Delta_x^{-1} \rho_1(t, X_1(t, x, v))-\nabla_x\Delta_x^{-1}\rho_2(t, X_2(t, x, v)) \vert^2 dvdx \right]^\frac{1}{2}\\&\leq (2P(t))^\frac{1}{2}\left[ \iint f_0(x, v)\vert \nabla_x\Delta_x^{-1} \rho_1(t, X_1(t, x, v))-\nabla_x\Delta_x^{-1}\rho_2(t, X_2(t, x, v)) \vert^2 dvdx \right]^\frac{1}{2}\\&
		\leq (2P(t))^\frac{1}{2}\left[ \iint f_0(x, v)\vert \nabla_x\Delta_x^{-1} \rho_1(t, X_1(t, x, v))-\nabla_x\Delta_x^{-1}\rho_2(t, X_1(t, x, v)) \vert^2 dvdx \right]^\frac{1}{2}\\& \quad + (2P(t))^\frac{1}{2}\left[ \iint f_0(x, v)\vert \nabla_x\Delta_x^{-1} \rho_2(t, X_1(t, x, v))-\nabla_x\Delta_x^{-1}\rho_2(t, X_2(t, x, v)) \vert^2 dvdx \right]^\frac{1}{2} \\ & =: (2P(t))^\frac{1}{2}\left([S_1(t)]^\frac{1}{2}+[S_2(t)]^\frac{1}{2}\right). 
	\end{flalign*}
	From \cite[Theorem 2.9]{Loeper}, we can estimate $S_1(t)$ as 
	\begin{flalign*}
		S_1(t)\leq 2\max\{\Vert \rho_1(t) \Vert_{L^\infty}, \Vert\rho_2(t)\Vert_{L^\infty}\}^2 P(t)\leq 4(\Vert \rho_1(t) \Vert^2_{H^{s+\frac{1}{2p}-\epsilon,p}}+\Vert \rho_2(t) \Vert^2_{H^{s+\frac{1}{2p}-\epsilon,p}})P(t). 
	\end{flalign*} 
	The estimate of $S_2(t)$ needs to be done differently from the case of \cite{Loeper}: we estimate 
	\begin{flalign*}
		S_2(t)&\leq \iint f_0(x, v) \Vert \nabla_x^2\Delta_x^{-1} \rho_2(t) \Vert_{C(\mathbb{R}^d)}^2\vert X_1(t, x, v)-X_2(t, x, v) \vert^2dvdx\\&
		\leq \Vert\rho_2(t)\Vert_{H^{s+\frac{1}{2p}-\epsilon, p}(\mathbb{R}^d)}^2 \iint f_0(x, v)\vert X_1(t, x, v)-X_2(t, x, v) \vert^2dvdx 
		\leq C\Vert \rho_2(t) \Vert^2_{H^{s+\frac{1}{2p}-\epsilon, p}(\mathbb{R}^d)}P(t). 
	\end{flalign*}
	Combining these estimates, we have the following inequality:
	\begin{flalign*}
		\frac{d}{dt}P(t)\leq CP(t)\left(1+\Vert \rho_1(t) \Vert^2_{H^{s+\frac{1}{2p}-\epsilon, p}}+\Vert \rho_2(t) \Vert^2_{H^{s+\frac{1}{2p}-\epsilon, p}}\right)^\frac{1}{2}. 
	\end{flalign*}
	The Gronwall's inequality gives
	\begin{flalign*}
		P(t)&\leq P(0)\exp\left( CT+C\int_0^T\left( \Vert\rho_1(t')\Vert_{H^{s+\frac{1}{2p}-\epsilon, p}}+\Vert\rho_2(t')\Vert_{H^{s+\frac{1}{2p}-\epsilon, p}} \right)dt' \right)\\&\leq P(0)\exp\left(CT+CT^{\frac{\epsilon p}{1+\epsilon p}}\left(\Vert\rho_1\Vert_{L^{1+\epsilon p}([0, T];\,H^{s+\frac{1}{2p}-\epsilon, p})}+\Vert\rho_2\Vert_{L^{1+\epsilon p}([0, T];\,H^{s+\frac{1}{2p}-\epsilon, p})}\right)\right).
	\end{flalign*}
	Since $P(0)=0$, we conclude that $P(t)=0$. This means that $\Phi_1(x, v, t)=\Phi_2(x, v, t)$ for all points $(x, v)$ where $f_0(x, v)\neq 0$. Since $f_1$ and $f_2$ are transported along the flow, as we have seen earlier, $f_1=f_2$, and the proof is complete. \qedsymbol 
	\subsection{Continuity in time}
	So far, we have obtained the existence and uniqueness of solutions in $L^\infty([0, T], L^1\cap H^{s,p}(\mathbb{R}^d\times\mathbb{R}^d))$. In this section, we show that such solutions actually belongs to $C([0, T], L^1\cap H^{s,p}(\mathbb{R}^d\times\mathbb{R}^d))$. Continuity with respect to $L^1$ follows from the fact that $f$ is transported along the flow. However, continuity in $H^{s,p}$ does not follow from this, since the norm is highly nonlocal.
	
	Our strategy for proving continuity is standard; using the Lipschitz continuity in time to show weak continuity in time and using the a proiri estimates of $f$ to show continuity in the norm. Let us start with the first one.
	
	As we stated earlier, we need Lipschitz continuity to do so. Let us recall that
	\begin{flalign*}
		\partial_t f=-v\cdot \nabla_x f+\nabla_x\Delta_x^{-1}\int_{\mathbb{R}^d} f(\cdot, v)dv\cdot \nabla_v f=0.
	\end{flalign*}
	Taking $H^{-1,p}_{x,v}$ norm on both sides, we obtain the following:
	\begin{flalign*}
		\Vert \partial_t f \Vert_{H^{-1,p}_{x,v}} &\leq \Vert -v\cdot \nabla_x f \Vert_{H^{-1,p}_{x,v}}+\left\Vert \nabla_x\Delta_x^{-1}\int_{\mathbb{R}^d} f(\cdot, v)dv\cdot \nabla_v f \right\Vert_{H^{-1,p}_{x,v}}\\&\leq C  \Vert -v\cdot \nabla_x f \Vert_{H^{-1,p}_{x}L^p_v}+ C\left\Vert \nabla_x\Delta_x^{-1}\int_{\mathbb{R}^d} f(\cdot, v)dv\cdot \nabla_v f \right\Vert_{H^{-1,p}_v L^p_x}\\&\leq CQ\Vert \nabla_x f \Vert_{H^{-1,p}_x L^p_v}+C\left\Vert \nabla_x\Delta_x^{-1}\int_{\mathbb{R}^d} f(\cdot, v)dv \right\Vert_{L^\infty_x} \Vert \nabla_v f \Vert_{H^{-1,p}_v L^p_x}\\
		&\leq CQ\Vert f \Vert_{L^p_{x,v}} +C\left\Vert \nabla_x\Delta_x^{-1}\int_{\mathbb{R}^d} f(\cdot, v)dv \right\Vert_{H^{s+\frac{1}{2p}-\epsilon}_x}\Vert f \Vert_{L^p_{x,v}}\\
		&\leq CQ\Vert f \Vert_{L^p_{x,v}} +C\left\Vert \int_{\vert v\vert\leq Q} f(\cdot, v)dv \right\Vert_{H^{s}_x}\Vert f \Vert_{L^p_{x,v}}\\
		&\leq CQ\Vert f \Vert_{L^p_{x,v}} +C\int_{\vert v\vert\leq Q}\left\Vert  f(\cdot, v) \right\Vert_{H^{s}_x}dv \cdot \Vert f \Vert_{L^p_{x,v}}\\
		&\leq C\Vert f\Vert_{L^p_{x,v}}\left( Q+CQ^{1-\frac{1}{p}}\Vert f\Vert_{H^{s}_xL^p_v}  \right)\\
		&\leq C\Vert f\Vert_{L^p_{x,v}}\left( Q+CQ^{1-\frac{1}{p}}\Vert f\Vert_{H^{s}_{x,v}}  \right)=M.
	\end{flalign*}
	
	Now, we show the weak continuity in time, as follows. Pick a test function $l\in H^{-s, \frac{p}{p-1}}(\mathbb{R}^d\times\mathbb{R}^d)$. Since the space of smooth test function is dense in $H^{-s, \frac{p}{p-1}}(\mathbb{R}^d\times\mathbb{R}^d)$, we can pick a smooth test function $\varphi$ such that $\Vert l-\varphi\Vert_{H^{-s, \frac{p}{p-1}}(\mathbb{R}^d\times\mathbb{R}^d)}<\frac{\epsilon}{4\sup_{\tau\in[0, T]}\Vert f(\tau) \Vert_{H^{s, p}_{x,v}}}$. Now, we have that
	\begin{flalign*}
		&\vert \langle f(t), l \rangle-\langle f(t'), l \rangle  \vert \leq \vert \langle f(t), l \rangle-\langle f(t), \varphi \rangle  \vert+\vert \langle f(t), \varphi \rangle-\langle f(t'), \varphi \rangle  \vert+\vert \langle f(t'), \varphi \rangle-\langle f(t'), l \rangle  \vert\\
		&\leq \Vert f(t) \Vert_{H^{s, p}_{x,v}} \Vert l-\varphi \Vert_{H^{-s, \frac{p}{p-1}}_{x,v}}+\Vert f(t)-f(t') \Vert_{H^{s, p}_{x,v}} \Vert \varphi \Vert_{H^{-s, \frac{p}{p-1}}_{x,v}} +\Vert f(t') \Vert_{H^{s, p}_{x,v}} \Vert l-\varphi \Vert_{H^{-s, \frac{p}{p-1}}_{x,v}}\\
		&\leq 2\sup_{\tau\in[0, T]} \Vert f(\tau)\Vert_{H^{s, p}_{x,v}} \cdot \frac{\epsilon}{4\sup_{\tau\in[0, T]}\Vert f(\tau) \Vert_{H^{s, p}_{x,v}}} + \int_{t}^{t'} \Vert \partial_t f(\tau) \Vert_{H^{-1, p}_{x,v}}  d\tau \cdot\Vert \varphi \Vert_{H^{1, \frac{p}{p-1}}_{x,v}} \\& <\frac{\epsilon}{2}+ \Vert \varphi \Vert_{H^{1, \frac{p}{p-1}}_{x,v}} \vert t-t'\vert M, 
	\end{flalign*}
	which can be made smaller than $\epsilon$ if have $$\vert t-t'\vert <\frac{\epsilon}{2\Vert \varphi \Vert_{H^{1, \frac{p}{p-1}}_{x,v}}M}.$$
	We now have the weak continuity in time. This also implies
	\begin{flalign*}
		\Vert f(t)\Vert_{H^{s, p}_{x,v}}  \leq \liminf_{t'\to t} \Vert f(t')\Vert_{H^{s, p}_{x,v}}
	\end{flalign*}
	and hence we obtain the lower semicontinuity in the norm.
	
	We turn to the proof of upper semicontinuity. Pick time $t\in[0, T]$. Assume that $t'>t$, since the opposite cases can be done in the same way. Then, we have a solution with initial data $f(t)$, and by uniqueness, this should coincide with $f(\cdot-t)$. We now have a similar a priori estimates for $f(\cdot-t)$:
	\begin{flalign*}
		\Vert f(t')\Vert_{H^{s,p}_{x,v}} \leq \Vert f(t)\Vert_{H^{s,p}_{x,v}}\exp(C(t'-t)+C(t'-t)^{1-\frac{1}{p}}R+C(t'-t)^{2-\frac{1}{p+\epsilon'}}R)
	\end{flalign*}
	and therefore, we have
	\begin{flalign*}
		\limsup_{t'\to t+0} \Vert f(t')\Vert_{H^{s,p}_{x,v}} \leq \limsup_{t'\to t+0} \Vert f(t)\Vert_{H^{s,p}_{x,v}}\exp(C(t'-t)+C(t'-t)^{1-\frac{1}{p}}R+C(t'-t)^{2-\frac{1}{p+\epsilon'}}R) = \Vert f(t)\Vert_{H^{s,p}_{x,v}}.
	\end{flalign*}
	Therefore, we have that
	\begin{flalign*}
		\lim_{t'\to t} \Vert f(t')\Vert_{H^{s,p}_{x,v}} = \Vert f(t)\Vert_{H^{s,p}_{x,v}}.
	\end{flalign*}
	By the weak continuity and continuity in the norm, we have strong continuity, and the proof of the continuity in time is complete.
	\section{Modifications of the Proof for the Case $d=1$}
	\label{1D}
	In this section, we show how to modify the proof in previous sections to prove the case $d=1$. The main difficulty is that the electric field $-\nabla_x\Delta_x^{-1}\rho$ is not in $L^p$ for any finite $p$.
	The first modification is the choice of $V$ and $X$. We choose them as follows:
	\begin{flalign*}
		V=\{\rho:[0, T]\times \mathbb{R}\to \mathbb{R}\,\vert\, &\rho\in L^{1+\epsilon p}([0, T], H^{s+\frac{1}{2p}-\epsilon, p}(\mathbb{R})),\; \nabla_x\Delta_x^{-1}\rho\in L^p([0, T], \dot{H}^{s+\frac{1}{2p}-\epsilon,p}(\mathbb{R})), \\& \;\text{and}\;  \partial_t \nabla_x\Delta_x^{-1}\rho\in L^{p}([0, T], L^p(\mathbb{R}))\}.
	\end{flalign*}
	where $\epsilon>0$ is again sufficiently small. The choice of $K$ is identical with the previous proof.
	The second modification of the proof is that as we mentioned before, we have that $\nabla_x\Delta_x^{-1}\rho$ is not in $L^p$. However, it is in $L^\infty$ uniformly in time by the following argument.
	
	We first have that
	\begin{flalign*}
		\Vert \nabla_x\Delta_x^{-1}\rho(0)\Vert_{L^\infty(\mathbb{R})}\leq \frac{1}{2} \Vert \rho(0)\Vert_{L^1(\mathbb{R})}\leq \frac{1}{2}\Vert f_0\Vert_{L^1(\mathbb{R}^2)}
	\end{flalign*}
	since we have
	\begin{flalign*}
		\vert\nabla_x\Delta_x^{-1}\rho(0, x)\vert=\frac{1}{2}\left\vert \int_{\mathbb{R}}\sgn(x-y)\rho(0, y)dy \right\vert.
	\end{flalign*}
	Also, we have
	\begin{flalign*}
		\Vert \partial_t\nabla_x\Delta_x^{-1}\rho\Vert_{L^1([0, T], L^\infty(\mathbb{R}))}&\leq C \Vert \partial_t\nabla_x\Delta_x^{-1}\rho\Vert_{L^1([0, T], H^{\frac{1}{p}+\epsilon, p}(\mathbb{R}))}\leq CT^{1-\frac{1}{p}}\Vert \partial_t\nabla_x\Delta_x^{-1}\rho\Vert_{L^p([0, T], H^{\frac{1}{p}+\epsilon, p}(\mathbb{R}))}\\&\leq CRT^{1-\frac{1}{p}}.
	\end{flalign*}
	Combining these results gives that
	\begin{flalign*}
		\Vert \nabla_x\Delta_x^{-1}\rho(t)\Vert_{L^\infty(\mathbb{R})}\leq \frac{1}{2}\Vert f_0\Vert_{L^1(\mathbb{R}^2)}+CRT^{1-\frac{1}{p}}
	\end{flalign*}
	for any $t\in[0, T]$, which gives
	\begin{flalign*}
		\Vert \nabla_x\Delta_x^{-1}\rho\Vert_{L^\infty([0, T], L^\infty(\mathbb{R}))}\leq \frac{1}{2}\Vert f_0\Vert_{L^1(\mathbb{R}^2)}+CRT^{1-\frac{1}{p}}.
	\end{flalign*}
	The third modification is the A priori estimates used to estimate $\Vert f^\rho\Vert_{H^{s,p}(\mathbb{R}^2)}$. It seems that we are in trouble since $\nabla_x\Delta_x^{-1}\rho$ is not in $L^p$, which implies that it is not in $H^{s+\frac{1}{2p}-\epsilon, p}$. However, if we look into the proof of the paraproduct estimates in section \S \ref{Para}, we see that we can slightly modify the proof to show that the following estimate holds:
	\begin{flalign*}
		&\frac{\partial}{\partial t}\Vert  f^\rho\Vert_{H^{s,p}(\mathbb{R}^2)} \leq C\left(1+\Vert \rho \Vert_{H^{s+\frac{1}{2p}-\epsilon,p}(\mathbb{R})}+ \Vert \nabla_x\Delta_x^{-1}\rho \Vert_{L^{\infty}(\mathbb{R})}\right)\Vert f^\rho \Vert_{H^{s,p}(\mathbb{R}^2)}.
	\end{flalign*}
	We therefore can use the Gronwall inequality to show that
	\begin{flalign*}
		\Vert f^\rho(t)\Vert_{H^{s,p}_{x,v}}&\leq \Vert f_0\Vert_{H^{s,p}_{x,v}}\exp\left(CT+C\Vert \rho \Vert_{L^1([0, T], H^{s+\frac{1}{2p}-\epsilon, p}(\mathbb{R}))} +C\Vert \nabla_x\Delta_x^{-1}\rho \Vert_{L^1([0, T], L^\infty(\mathbb{R}))} \right) \\&\leq \Vert f_0\Vert_{H^{s,p}_{x,v}}\exp\left(CT +CRT^\frac{\epsilon p}{1+\epsilon p}+CT\left(\frac{1}{2}\Vert f_0\Vert_{L^1(\mathbb{R}^2)}+CRT^{1-\frac{1}{p}}\right) \right).
	\end{flalign*}
	The fourth modification is the moment we try to estimate $\Vert G[\rho] \Vert_{L^p([0, T], H^{s+\frac{1}{2p}, p}(\mathbb{R}))}$. There is no problem with proceeding as in the previous argument until we face to estimate the following quantity:
	\begin{flalign*}
		\Vert \Lambda_x^s(\nabla_x\Delta_x^{-1}\rho f^\rho) \Vert_{L^p([0, T]\times \mathbb{R}^2)}.
	\end{flalign*}
	We still have to use the Kato Ponce inequality, but since $\nabla_x\Delta_x^{-1}\rho$ is not in $L^p$, the standard form does not work. However, it is also known that the following homogeneous version of Kato Ponce inequality also works\cite{KP2}, \cite{KP1}:
	\begin{flalign*}
		\Vert D^s( h g) \Vert_{L^r(\mathbb{R}^n)}\lesssim \Vert h \Vert_{L^{p_1}(\mathbb{R}^n)}\Vert D^{s}  g\Vert_{L^{q_1}(\mathbb{R}^n)}+\Vert D^s h \Vert_{L^{p_2}(\mathbb{R}^n)}\Vert g\Vert_{L^{q_2}(\mathbb{R}^n)}. 
	\end{flalign*}
	Here, $D=\sqrt{-\Delta}$. Using this and the standard Holder's inequality, we see that we can actually show that
	\begin{flalign*}
		\Vert \Lambda^s( h g) \Vert_{L^r(\mathbb{R}^n)}\lesssim \Vert h \Vert_{L^{p_1}(\mathbb{R}^n)}\Vert\Lambda^{s}  g\Vert_{L^{q_1}(\mathbb{R}^n)}+\Vert D^s h \Vert_{L^{p_2}(\mathbb{R}^n)}\Vert g\Vert_{L^{q_2}(\mathbb{R}^n)}. 
	\end{flalign*}
	Using this, we see that
	\begin{flalign*}
		&\Vert \Lambda_x^s(\nabla_x\Delta_x^{-1}\rho(t, \cdot) f^\rho(t, \cdot, v)) \Vert_{L^p(\mathbb{R})}\leq C(\Vert D_x^{s}\nabla_x\Delta_x^{-1}\rho(t) \Vert_{L^q_x}\Vert f^\rho(t, \cdot, v) \Vert_{L^r_x}+\Vert \nabla_x\Delta_x^{-1}\rho(t) \Vert_{L^\infty_x}\Vert \Lambda_x^s f^\rho(t,\cdot,  v) \Vert_{L^p_x}).
	\end{flalign*}
	We choose $q$ and $r$ such that $\frac{1}{p}=\frac{1}{q}+\frac{1}{r}$ such that the following embedding holds:
	\begin{flalign*}
		\dot{H}^{s+\frac{1}{2p}-\epsilon,p}(\mathbb{R})\hookrightarrow \dot{H}^{s, q}(\mathbb{R})\qquad \text{and}\qquad H^{s,p}(\mathbb{R})\hookrightarrow L^r(\mathbb{R})
	\end{flalign*}
	This is satisfied if $\frac{1}{2p}-\epsilon-\frac{1}{p}>-\frac{1}{q}$ and $s-\frac{1}{p}>-\frac{1}{r}$. This is equivalent to $\frac{1}{2p}+\epsilon<\frac{1}{q}<s$ and this is possible since $s>\frac{1}{2p}$.
	
	Then, one has
	\begin{flalign*}
		&\Vert \Lambda_x^s(\nabla_x\Delta_x^{-1}\rho(t, \cdot) f^\rho(t, \cdot, v)) \Vert_{L^p(\mathbb{R})}\\&\leq C(\Vert \nabla_x\Delta_x^{-1}\rho(t) \Vert_{\dot{H}^{s+\frac{1}{2p}-\epsilon,p}_x}\Vert f^\rho(t, \cdot, v) \Vert_{H^{s,p}_x}+\Vert \nabla_x\Delta_x^{-1}\rho(t) \Vert_{L^\infty_x}\Vert \Lambda_x^s f^\rho(t,\cdot,  v) \Vert_{L^p_x}).
	\end{flalign*}
	Integrating in $v$ gives
	\begin{flalign*}
		&\Vert \Lambda_x^s(\nabla_x\Delta_x^{-1}\rho(t, \cdot) f^\rho(t, \cdot)) \Vert_{L^p(\mathbb{R}^2)}\\
		&\leq C  \Vert  f^\rho(t,\cdot) \Vert_{H^{s,p}_{x,v}}(\Vert \nabla_x\Delta_x^{-1}\rho(t) \Vert_{\dot{H}^{s+\frac{1}{2p}-\epsilon,p}_x}+\Vert \nabla_x\Delta_x^{-1}\rho(t) \Vert_{L^\infty_x})\\
		&\leq C \Vert f_0\Vert_{H^{s,p}_{x,v}}\exp\left( CT +CRT^{\frac{\epsilon p}{1+\epsilon p}}+CT\left(\frac{1}{2}\Vert f_0\Vert_{L^1(\mathbb{R}^2)}+CRT^{1-\frac{1}{p}}\right) \right)\\&\times(\Vert \nabla_x\Delta_x^{-1}\rho(t) \Vert_{\dot{H}^{s+\frac{1}{2p}-\epsilon,p}_x}+\frac{1}{2}\Vert f_0\Vert_{L^1(\mathbb{R}^2)}+CRT^{1-\frac{1}{p}})
	\end{flalign*}
	and hence, we obtain
	\begin{flalign*}
		&\Vert \Lambda_x^s(\nabla_x\Delta_x^{-1}\rho(t, \cdot) f^\rho(t, \cdot)) \Vert_{L^p([0, T]\times\mathbb{R}^2)}\\&\leq  C \Vert f_0\Vert_{H^{s,p}_{x,v}}\exp\left( CT +CRT^{\frac{\epsilon p}{1+\epsilon p}}+CT\left(\frac{1}{2}\Vert f_0\Vert_{L^1(\mathbb{R}^2)}+CRT^{1-\frac{1}{p}}\right) \right)\\&\times\left(\left(\int_0^T \Vert \nabla_x\Delta_x^{-1}\rho(t) \Vert^p_{\dot{H}^{s+\frac{1}{2p}-\epsilon,p}_x} dt\right)^\frac{1}{p}+\frac{1}{2}T^{\frac{1}{p}}\Vert f_0\Vert_{L^1(\mathbb{R}^2)}+CRT\right)\\&=  C \Vert f_0\Vert_{H^{s,p}_{x,v}}\exp\left( CT +CRT^{\frac{\epsilon p}{1+\epsilon p}}+CT\left(\frac{1}{2}\Vert f_0\Vert_{L^1(\mathbb{R}^2)}+CRT^{1-\frac{1}{p}}\right) \right)\left(R+\frac{1}{2}T^{\frac{1}{p}}\Vert f_0\Vert_{L^1(\mathbb{R}^2)}+CRT\right)\\&\leq  C \Vert f_0\Vert_{H^{s,p}_{x,v}}\exp\left( CT +CRT^{\frac{\epsilon p}{1+\epsilon p}}+CT\left(\frac{1}{2}\Vert f_0\Vert_{L^1(\mathbb{R}^2)}+CRT^{1-\frac{1}{p}}\right) \right)\left(R+\frac{1}{2}T^{\frac{1}{p}}\Vert f_0\Vert_{L^1(\mathbb{R}^2)}+CRT\right)
	\end{flalign*}
	Using this, we see that
	\begin{flalign*}
		&\int_0^T\left\Vert \int_{\mathbb{R}}  f^\rho(x,v)dv\right\Vert^{1+\epsilon p}_{H^{s+\frac{1}{2p}-\epsilon, p}}dt \\&
		\leq C T^{(p\epsilon)^2} (1+Q^\frac{d(p-1)(1+p\epsilon)}{p}) \\&\times\left((1+T^{1-p\epsilon}) \Vert \Lambda_{x}^s f^\rho(x,v)\Vert_{L^\infty([0, T], L^p(\mathbb{R}^2))}+T^{1-\frac{1}{p}} \Vert \Lambda_x^s(\nabla_x\Delta_x^{-1}\rho f^\rho) \Vert_{L^p([0, T]\times \mathbb{R}^2)}\right)^{1+p\epsilon}\\&
		\leq C T^{(p\epsilon)^2} (1+Q^\frac{d(p-1)(1+p\epsilon)}{p})\Vert f_0\Vert^{1+\epsilon p}_{H^{s, p}_{x,v}}\exp\left( CT +CRT^{\frac{\epsilon p}{1+\epsilon p}}+CT\left(\frac{1}{2}\Vert f_0\Vert_{L^1(\mathbb{R}^2)}+CRT^{1-\frac{1}{p}}\right) \right) \\&\times\left((1+T^{1-p\epsilon})\left(1+RT^{1-\frac{1}{p}}+\frac{1}{2}T\Vert f_0\Vert_{L^1(\mathbb{R}^2)}+CRT^{2-\frac{1}{p}}\right)\right)^{1+p\epsilon}
	\end{flalign*}
	which can be made smaller than $\left( \frac{R}{3} \right)^p$ if we make $T$ small enough.
	
	To bound the quantity
	\begin{flalign*}
		\int_0^T\Vert \nabla_x\Delta_x^{-1}G[\rho](t) \Vert^{p}_{\dot{H}^{s+\frac{1}{2p}-\epsilon, p}_x}dt,
	\end{flalign*} 
	we proceed as
	\begin{flalign*}
		&\Vert \nabla_x\Delta_x^{-1}G[\rho](t) \Vert_{\dot{H}^{s+\frac{1}{2p}-\epsilon, p}_x}=\left\Vert \nabla_x\Delta_x^{-1}\int_{\vert v\vert\leq Q} f^\rho(t, \cdot, v)dv \right\Vert_{\dot{H}^{s+\frac{1}{2p}-\epsilon, p}_x}\\&=\left\Vert \int_{\vert v\vert\leq Q} f^\rho(t, \cdot, v)dv \right\Vert_{\dot{H}^{s+\frac{1}{2p}-\epsilon-1, p}_x}\leq \int_{\vert v\vert\leq Q} \left\Vert f^\rho(t, \cdot, v)\right\Vert_{\dot{H}^{s+\frac{1}{2p}-\epsilon-1, p}_x}dv 
	\end{flalign*}
	We consider two cases:
	\begin{enumerate}
		\item $s+\frac{1}{2p}-1-\epsilon\geq0$:\\
		In this case, we have
		\begin{flalign*}
			&\int_{\vert v\vert\leq Q} \left\Vert f^\rho(t, \cdot, v)\right\Vert_{\dot{H}^{s+\frac{1}{2p}-\epsilon-1, p}_x}dv \leq CQ^{1-\frac{1}{p}}\Vert f^\rho(t)\Vert_{H^{s, p}_xL^p_v}\leq CQ^{1-\frac{1}{p}}\Vert f^\rho\Vert_{H^{s, p}_{x,v}}\\&\leq CQ^{1-\frac{1}{p}}\Vert f_0\Vert_{H^{s,p}_{x,v}}\exp\left(CT +CRT^\frac{\epsilon p}{1+\epsilon p}+CT\left(\frac{1}{2}\Vert f_0\Vert_{L^1(\mathbb{R}^2)}+CRT^{1-\frac{1}{p}}\right) \right)
		\end{flalign*}
		and thus
		\begin{flalign*}
			\int_0^T\Vert \nabla_x\Delta_x^{-1}G[\rho](t) \Vert^{p}_{\dot{H}^{s+\frac{1}{2p}-\epsilon, p}_x}dt \leq CTQ^{p-1}\Vert f_0\Vert_{H^{s,p}_{x,v}}^p\exp\left(CT +CRT^\frac{\epsilon p}{1+\epsilon p}+CT\left(\frac{1}{2}\Vert f_0\Vert_{L^1(\mathbb{R}^2)}+CRT^{1-\frac{1}{p}}\right) \right)
		\end{flalign*}
		\item $s+\frac{1}{2p}-1-\epsilon<0$:\\
		In this case, we have
		\begin{flalign*}
			&\int_{\vert v\vert\leq Q} \left\Vert f^\rho(t, \cdot, v)\right\Vert_{\dot{H}^{s+\frac{1}{2p}-\epsilon-1, p}_x}dv \leq C\int_{\vert v\vert\leq Q} \left\Vert f^\rho(t, \cdot, v)\right\Vert_{L^q_x}dv\leq C\int_{\vert v\vert\leq Q} \left\Vert f^\rho(t, \cdot, v)\right\Vert_{L^p_x}^{\beta}\left\Vert f^\rho(t, \cdot, v)\right\Vert_{L^1_x}^{1-\beta}dv\\
			&\leq C\int_{\vert v\vert\leq Q} \left\Vert f^\rho(t, \cdot, v)\right\Vert_{L^p_x}^{\beta}\left\Vert f^\rho(t, \cdot, v)\right\Vert_{L^1_x}^{1-\beta}dv\leq C\int_{\vert v\vert\leq Q} \left(\beta\left\Vert f^\rho(t, \cdot, v)\right\Vert_{L^p_x}+(1-\beta)\left\Vert f^\rho(t, \cdot, v)\right\Vert_{L^1_x}\right)dv\\
			&\leq C\left( \beta Q^{1-\frac{1}{p}}\Vert f_0 \Vert_{L^p_{x,v}}  +(1-\beta)\Vert f_0\Vert_{L^1_{x,v}} \right)
		\end{flalign*}
		where $\frac{1}{q}=\frac{1}{p}-(s+\frac{1}{2p}-1-\epsilon)$ and $\frac{1}{q}=\frac{\beta}{p}+(1-\beta)$ and therefore
		\begin{flalign*}
			\int_0^T\Vert \nabla_x\Delta_x^{-1}G[\rho](t) \Vert^{p}_{\dot{H}^{s+\frac{1}{2p}-\epsilon, p}_x}dt \leq CT\left( \beta Q^{1-\frac{1}{p}}\Vert f_0 \Vert_{L^p_{x,v}}  +(1-\beta)\Vert f_0\Vert_{L^1_{x,v}} \right)^p.
		\end{flalign*}
	\end{enumerate}
	In either case, by choosing $T>0$ sufficiently small, we can bound the quantity by $\left(\frac{R}{3}\right)^p$.
	
	Finally, bounding the quantity
	\begin{flalign*}
		\int_0^T\Vert \partial_t\nabla_x\Delta_x^{-1}G[\rho](t) \Vert^{p}_{L^{p}_x}dt.
	\end{flalign*}
	can be done similarly as in the case $d\geq2$.
	
	The final major modification is the step proving the continuity of $G$. We still want to use Aubin-Lions lemma, but $\nabla_x\Delta_x^{-1}\rho$ is not in $L^p$ globally. However, since it is in $L^\infty$, our $\nabla_x\Delta_x^{-1}\rho$ is in $L^p$ locally, so it makes the use of Aubin-Lions lemma valid, since we are jsut using it on compact subsets anyway.
	
	Uniqueness and continuity of solution can be done in almost the same way.
	\section{Proof of A Priori Estimates Using Paraproduct Estimates}
	\label{Para}
	In this section we prove the a priori estimates for $f$ proposed in Section \ref{Existence}. Before starting, let us define some notations. In the $(x, v)$-space, which we need when controlling $f^\rho$, we use the following Littlewood-paley decomposition. Let $\phi_j(\cdot)=\phi(2^{-j}\cdot)$, where $\phi$ is a radial smooth function supported on $\frac{3}{4}\leq \vert (x,v)\vert\leq \frac{8}{3}$ and satisfies $\sum_{j=-\infty}^\infty \phi_j=1$ and let $\psi_{j}=\sum_{k\leq j}\phi_k$. Let $\widetilde{\phi}_j(\cdot)=\widetilde{\phi}(2^{-j}\cdot)$ where $\widetilde{\phi}$ is a radial smooth function supported in $\frac{1}{2}\leq \vert (x,v)\vert \leq 4$ such that $\widetilde{\phi}(x)=1$ on $\supp \phi$ and $\widetilde{\phi}_j(\cdot)=\widetilde{\phi}(2^{-j}\cdot)$. We denote $\phi_j(D_{x,v})$ and $\psi_j(D_{x,v})$ the Littlewood-Paley operators. In $x$-space, which we need when controlling $\nabla_x\Delta_x^{-1}\rho$, we use the same notation, However, to aviod confusion, we declare the variables. Finally, we denote $F^{s}_{p, q}=\{ f\in\mathcal{S}': \phi_j(D_x)f\in L^p(l^q) \}$ the Triebel Lizorkin space.
	
	Let us start with the proof. We recall the following:
	\begin{flalign*}
		[\Lambda_{x,v}^s, \nabla_x\Delta_x^{-1}\rho\cdot \nabla_v]f^\rho&=[\Lambda_{x,v}^s, T_{\nabla_x\Delta_x^{-1}\rho}]\cdot \nabla_vf^\rho+(T_{\nabla_x\Delta_x^{-1}\rho}\cdot \nabla_v\Lambda_{x,v}^s f^\rho-\nabla_x\Delta_x^{-1}\rho\cdot \nabla_v\Lambda_{x,v}^s f^\rho)\\
		&\;+\Lambda_{x,v}^sT_{\nabla_v f^\rho}\nabla_x\Delta_x^{-1}\rho+\Lambda_{x,v}^s R(\nabla_x\Delta_x^{-1}\rho, \nabla_v f^\rho) = \mathcal{T}_1+\mathcal{T}_2+\mathcal{T}_3+\mathcal{T}_4.
	\end{flalign*}
	where we denote $T_fg:= \sum_{k\geq 4} \psi_{k-3}(D)f \phi_k(D)g $ the paraproduct and $R(f, g)=fg-T_fg-T_gf$ be the remainder.
	We have to estimate each term separately to obtain the bound
	\begin{flalign*}
		\Vert[\Lambda_{x,v}^s, \nabla_x\Delta_x^{-1}\rho\cdot \nabla_v]f^\rho\Vert_{L^p_{x,v}}\leq C\Vert \nabla_x\Delta_x^{-1}\rho \Vert_{H^{s+1+\frac{1}{2p}-\epsilon-\frac{\epsilon'd}{p(p+\epsilon')},p+\epsilon'}_x}\Vert f \Vert_{H^{s,p}_{x,v}}.
	\end{flalign*}
	From now on, we write $\nabla_x\Delta_x^{-1}\rho=A$ and $f^\rho=f$ for simplicity. 
	\subsection{Term $\mathcal{T}_1$}
	We rewrite term $\mathcal{T}_1$ in the following form:
	\begin{flalign*}
		&\iiint e^{i(\xi+\eta, \mu)\cdot(x,v)}\left[ \langle \xi+\eta, \mu \rangle^s-\langle (\xi, \mu) \rangle \right]\sum_{k\geq 4} \psi_{k-3}(\eta)\widehat{A}(\eta)\cdot \phi_k(\xi, \mu) i\mu\widehat{f}(\xi, \mu)d\xi d\mu d\eta \\
		&=\iiint e^{i(\xi+\eta, \mu)\cdot(x,v)}\left[\int_0^1 s\langle (\xi+t\eta, \mu)\rangle^{s-2}(\xi+t\eta, \mu) dt\right]\cdot\eta\\&\times\sum_{k\geq 4} \psi_{k-3}(\eta)\widehat{A}(\eta)\cdot \phi_k(\xi, \mu) i\mu\widehat{f}(\xi, \mu)d\xi d\mu d\eta \\
		&=\iiint e^{i(\xi+\eta, \mu)\cdot(x,v)}\left[\mu\langle(\xi, \mu)\rangle^{s-1}\widehat{f}(\xi, \mu)\right]\left[i\eta\widehat{A}(\eta, \nu)\right]\cdot\\&\times\sum_{k\geq 4}\left[ \psi_{k-3}(\eta)\cdot \phi_k(\xi, \mu) \langle(\xi, \mu)\rangle^{1-s}\int_0^1 s\langle (\xi, \mu+t\nu)\rangle^{s-2}(\xi+t\eta, \mu) dt\right]d\xi d\mu d\eta 
	\end{flalign*}
	The bilinear Fourier multiplier
	\begin{flalign*}
		 \sum_{k\geq 4}\psi_{k-3}(\eta)\cdot \phi_k(\xi, \mu) \langle(\xi, \mu)\rangle^{1-s}\int_0^1 s\langle (\xi+t\eta, \mu)\rangle^{s-2}(\xi+t\eta, \mu) dt
	\end{flalign*}
	belongs to the Coifmann-Meyer class defined as in page 561 in \cite{Grafakos} as one can check, so we can apply the Coifmann-Meyer theorem to apply
	\begin{flalign*}
		\Vert \mathcal{T}_1 \Vert_{L^p_{x,v}}\leq C\Vert \nabla_{x,v}A(t, x) \Vert_{L^\infty_x} \Vert \Lambda^{s-1}_{x,v}\nabla_v f(t, x, v) \Vert_{L^p_{x,v}}\leq C \Vert A(t) \Vert_{H^{s+1+\frac{1}{2p}-\epsilon-\frac{\epsilon'd}{p(p+\epsilon')},p+\epsilon'}_x} \Vert f(t) \Vert_{H^{s,p}_{x,v}},
	\end{flalign*}
	where in the last inequality, we used the fact that $A$ is independent of $v$ and the Sobolev embedding $H^{s+1+\frac{1}{2p}-\epsilon-\frac{\epsilon'd}{p(p+\epsilon')},p+\epsilon'}\hookrightarrow C^1$.
	\subsection{Term $\mathcal{T}_2$}
	We write $\mathcal{T}_2$ as follows:
	\begin{flalign*}
		\mathcal{T}_2=\sum_{j\geq 1}(\psi_{j+2}(D_{x,v})\nabla_v\Lambda^s_{x,v}f)\cdot \phi_j(D_x)A+ (\psi_{3}(D_{x,v})\nabla_v\Lambda^s_{x,v}f)\cdot \psi_0(D_x)A
	\end{flalign*}
	We estimate the $L^p$ norm of each term in the sum. Then, this becones:
	\begin{flalign*}
		&\Vert (\psi_{j+2}(D_{x,v})\nabla_v\Lambda^s_{x,v}f)\cdot \phi_j(D_x)A \Vert_{L^p_{x,v}}\leq \Vert \psi_{j+2}(D_{x,v})\nabla_v\Lambda^s_{x,v}f \Vert_{L^p_{x,v}} \Vert\phi_j(D_x)A\Vert_{L^\infty_x}\\
		&\leq C\cdot 2^j\Vert \psi_{j+2}(D_{x,v})\Lambda^s_{x,v}f \Vert_{L^p_{x,v}} 2^\frac{jd}{p+\epsilon'} \Vert\phi_j(D_x)A\Vert_{L^{p+\epsilon'}_x}\\
		&\leq C\cdot 2^{j(1+\frac{d}{p+\epsilon'})}\Vert \Lambda^s_{x,v}f \Vert_{L^p_{x,v}} 2^{j(-s-1-\frac{1}{2p}+\epsilon+\frac{\epsilon'd}{p(p+\epsilon')})}2^{j(s+1+\frac{1}{2p}-\epsilon-\frac{\epsilon'd}{p(p+\epsilon')})} \Vert\phi_j(D_x)A\Vert_{L^{p+\epsilon'}_x}\\
		&\leq C\cdot 2^{j(-s+\frac{d}{p}-\frac{1}{2p}+\epsilon)}\Vert \Lambda^s_{x,v}f \Vert_{L^p_{x,v}} 2^{j(s+1+\frac{1}{2p}-\epsilon-\frac{\epsilon'd}{p(p+\epsilon')})} \Vert\phi_j(D_x)A\Vert_{L^{p+\epsilon'}_x}.
	\end{flalign*}
	Summing over $j\geq 1$ gives:
	\begin{flalign*}
		&\sum_{j=1}^\infty \Vert (\psi_{j+2}(D_{x,v})\nabla_v\Lambda^s_{x,v}f)\cdot \phi_j(D_x)A \Vert_{L^p_{x,v}}\leq C\sum_{j=1}^\infty 2^{j(-s+\frac{d}{p}-\frac{1}{2p}+\epsilon)}\Vert \Lambda^s_{x,v}f \Vert_{L^p_{x,v}} 2^{j(s+1+\frac{1}{2p}-\epsilon-\frac{\epsilon'd}{p(p+\epsilon')})} \Vert\phi_j(D_x)A\Vert_{L^{p+\epsilon'}_x}\\
		&\leq C \Vert \Lambda^s_{x,v}f \Vert_{L^p_{x,v}}\sum_{j=1}^\infty 2^{j(-s+\frac{d}{p}-\frac{1}{2p}+\epsilon)} 2^{j(s+1+\frac{1}{2p}-\epsilon-\frac{\epsilon'd}{p(p+\epsilon')})} \Vert\phi_j(D_x)A\Vert_{L^{p+\epsilon'}_x}\\&\leq C \Vert \Lambda^s_{x,v}f \Vert_{L^p_{x,v}}\left( \sum_{j=1}^\infty 2^{\frac{j(p+\epsilon')}{p+\epsilon'-1}(-s+\frac{d}{p}-\frac{1}{2p}+\epsilon)} \right)^{1-\frac{1}{p+\epsilon'}}\left( \sum_{j=1}^\infty 2^{j(s+1+\frac{1}{2p}-\epsilon-\frac{\epsilon'd}{p(p+\epsilon')})(p+\epsilon')} \Vert\phi_j(D_x)A\Vert_{L^{p+\epsilon'}_x}^{p+\epsilon'} \right)^\frac{1}{p+\epsilon'} \\&\leq C\Vert \Lambda^s_{x,v}f \Vert_{L^p_{x,v}} \Vert A\Vert_{\left(F^{s+1+\frac{1}{2p}-\epsilon-\frac{\epsilon'd}{p(p+\epsilon')}}_{p+\epsilon',p+\epsilon'}\right)_{x}}\\&\leq C\Vert f \Vert_{H^{s,p}_{x,v}} \Vert A\Vert_{H^{s+1+\frac{1}{2p}-\epsilon-\frac{\epsilon'd}{p(p+\epsilon')},p+\epsilon'}_{x}},
	\end{flalign*}
	where we used the fact that $s>\frac{d}{p}-\frac{1}{2p}+\epsilon$ when moving to third line to fourth line.
	We also estimate the second term. This becomes:
	\begin{flalign*}
		\Vert (\psi_{3}(D_{x,v})\nabla_v\Lambda^s_{x,v}f)\cdot \psi_0(D_x)A \Vert_{L^p_{x,v}}&\leq \Vert \psi_{3}(D_{x,v})\nabla_v\Lambda^s_{x,v}f \Vert_{L^p_{x,v}} \Vert\psi_0(D_x)A\Vert_{L^\infty_x}\leq C\Vert \Lambda^s_{x,v}f \Vert_{L^p_{x,v}} \Vert A\Vert_{L^\infty_x}\\&\leq C\Vert f \Vert_{H^{s,p}_{x,v}}\Vert A \Vert_{H^{s+1+\frac{1}{2p}-\epsilon-\frac{\epsilon'd}{p(p+\epsilon')},p+\epsilon'}_x}
	\end{flalign*}
	Therefore, we have
	\begin{flalign*}
		\Vert \mathcal{T}_2\Vert_{L^p}\leq C\Vert f \Vert_{H^{s,p}_{x,v}}\Vert A \Vert_{H^{s+1+\frac{1}{2p}-\epsilon-\frac{\epsilon'd}{p(p+\epsilon')},p+\epsilon'}_x}.
	\end{flalign*}
	\subsection{Term $\mathcal{T}_3$}
	We write this term as
	\begin{flalign*}
		\mathcal{T}_3=\Lambda^s_{x,v}\sum_{j\geq 4}\psi_{j-3}(D_{x,v})\nabla_vf\cdot \phi_j(D_x) A. 
	\end{flalign*}
	We evaluate the $L^p$ norm of the Littlewood-Paley block:
	\begin{flalign*}
		&\left\Vert \phi_k(D_{x,y})\Lambda^s_{x,v}\left(\sum_{j\geq 4}\psi_{j-3}(D_{x,v})\nabla_vf\cdot \phi_j(D_x) A \right)\right\Vert_{L^p_{x,v}}\\&\leq \sum_{j\geq 4} \left\Vert \phi_k(D_{x,y})\Lambda^s_{x,v}\left(\psi_{j-3}(D_{x,v})\nabla_vf\cdot \phi_j(D_x) A \right) \right\Vert_{L^p_{x,v}}
	\end{flalign*}
	Note that the term 
	$$\psi_{j-3}(D_{x,v})\nabla_vf\cdot \phi_j(D_x) A$$
	has its Fourier support inside the annulus $\left\{ 2^{j-2}<\vert (\xi, \mu)\vert < 2^{j+2} \right\}$. Therefore, only $k=j-N, j-M+1, \cdots, j+N$ terms survive with $N$ an independent constant, so we only need to evaluate only for such $(j,k)$ pairs. We have
	\begin{flalign*}
		&\left\Vert \phi_k(D_{x,y})\Lambda^s_{x,v}\left(\psi_{j-3}(D_{x,v})\nabla_vf\cdot \phi_j(D_x) A \right) \right\Vert_{L^p_{x,v}}\leq C\cdot 2^{ks} \left\Vert \psi_{j-3}(D_{x,v})\nabla_vf\cdot \phi_j(D_x) A \right\Vert_{L^p_{x,v}}\\&\leq C\cdot 2^{ks} \Vert \psi_{j-3}(D_{x,v})\nabla_vf \Vert_{L^\frac{p(p+\epsilon')}{\epsilon'}_xL^p_v}  \Vert \phi_j(D_x) A\Vert_{L^{p+\epsilon'}_x}\leq  C\cdot 2^{ks} \cdot2^j\Vert \psi_{j-3}(D_{x,v})f \Vert_{L^\frac{p(p+\epsilon')}{\epsilon'}_xL^p_v}  \Vert \phi_j(D_x) A\Vert_{L^{p+\epsilon'}_x}\\
		&\leq C\cdot 2^{(k+1)s} \Vert \psi_{j-3}(D_{x,v})f \Vert_{L^\frac{p(p+\epsilon')}{\epsilon'}_xL^p_v}  \Vert \phi_j(D_x)\widetilde{\phi}_j(D_x) A\Vert_{L^{p+\epsilon'}_x}\\
		&\leq  C\cdot 2^{(k+1)s} \Vert \psi_{j-3}(D_{x,v})f \Vert_{L^\frac{p(p+\epsilon')}{\epsilon'}_xL^p_v}  \Vert \phi_j(D_x) \nabla_x A\Vert_{L^{p+\epsilon'}_x} \cdot 2^{-j}\\&= C\cdot 2^{ks} \Vert \psi_{j-3}(D_{x,v})f \Vert_{L^\frac{p(p+\epsilon')}{\epsilon'}_xL^p_v}  \Vert \phi_j(D_x) \nabla_x A\Vert_{L^{p+\epsilon'}_x} 
	\end{flalign*}
	where in the last inequality, we have used the Young's convolution inequality to the function $\phi_j(D_x)\widetilde{\phi}_j(D_x) A=  D_x^{-2}\nabla_x\widetilde{\phi}_j\cdot\phi_j(D_x)(D_x) \nabla_xA$ and the fact that the inverse Fourier transform of $\vert\xi\vert^{-2}\xi\widetilde{\phi}_j(\vert\xi\vert)$ is has $L^1$ norm equal to $C\cdot 2^{-j}$. Now, we perform the summation in $(j,k)$ to obtain:
	\begin{flalign*}
			&\sum_{k\in\mathbb{Z}}\left\Vert \phi_k(D_{x,y})\Lambda^s_{x,v}\left(\sum_{j\geq 4}\psi_{j-3}(D_{x,v})\nabla_vf\cdot \phi_j(D_x) A \right)\right\Vert_{L^p_{x,v}}\\&\leq\sum_{j\geq 4} \sum_{k=j-N}^{j+N}C\cdot 2^{ks} \Vert \psi_{j-3}(D_{x,v})f \Vert_{L^\frac{p(p+\epsilon')}{\epsilon'}_xL^p_v}  \Vert \phi_j(D_x) \nabla_x A\Vert_{L^{p+\epsilon'}_x}\\&
			\leq \sum_{j\geq 4}C\cdot 2^{js} \Vert \psi_{j-3}(D_{x,v})f \Vert_{L^\frac{p(p+\epsilon')}{\epsilon'}_xL^p_v}  \Vert \phi_j(D_x) \nabla_x A\Vert_{L^{p+\epsilon'}_x}\\
			&\leq \sum_{j\geq 4} C\cdot 2^{j(\frac{1}{2p}-\epsilon-\frac{2\epsilon'd}{p(p+\epsilon')})} \Vert \Lambda^{-\frac{1}{2p}+\epsilon+\frac{2\epsilon'd}{p(p+\epsilon')}}_{x}f \Vert_{L^\frac{p(p+\epsilon')}{\epsilon'}_xL^p_v}\cdot  2^{js} \Vert \phi_j(D_x) \nabla_x A\Vert_{L^{p+\epsilon'}_x}\\
			&\leq \sum_{j\geq 4} C\cdot 2^{j(\frac{1}{2p}-\epsilon-\frac{2\epsilon'd}{p(p+\epsilon')})} \Vert \Lambda^{-\frac{1}{2p}+\epsilon+\frac{2\epsilon'd}{p(p+\epsilon')}}_{x}f \Vert_{H^{\frac{d}{p+\epsilon'},p}_xL^p_v}\cdot  2^{js} \Vert \phi_j(D_x) \nabla_x A\Vert_{L^{p+\epsilon'}_x}\\
			&\leq \sum_{j\geq 4} C\cdot 2^{-j(\frac{\epsilon'd}{p(p+\epsilon')})} \Vert f \Vert_{H^{s,p}_xL^p_v}\cdot  2^{j(s+\frac{1}{2p}-\epsilon-\frac{\epsilon'd}{p(p+\epsilon')})} \Vert \phi_j(D_x) \nabla_x A\Vert_{L^{p+\epsilon'}_x}\\
			&\leq \sum_{j\geq 4} C\cdot 2^{-j(\frac{\epsilon'd}{p(p+\epsilon')})} \Vert f \Vert_{H^{s,p}_{x,v}}\cdot  2^{j(s+\frac{1}{2p}-\epsilon-\frac{\epsilon'd}{p(p+\epsilon')})} \Vert \phi_j(D_x) \nabla_x A\Vert_{L^{p+\epsilon'}_x},
	\end{flalign*}
    We have used the Young's convolution inequality to the function $$\psi_{j-3}(D_{x,v})f=\psi_{j-3}(D_{x,v})\Lambda_{x}^{\frac{1}{2p}-\epsilon-\frac{2\epsilon'd}{p(p+\epsilon')}}\Lambda_{x}^{-\frac{1}{2p}+\epsilon+\frac{2\epsilon'd}{p(p+\epsilon')}}f$$
	and that the inverse Fourier transform of $\langle \xi \rangle^{\frac{1}{2p}-\epsilon-\frac{2\epsilon'd}{p(p+\epsilon')}}\psi_{j-3}(\xi, \mu)$ has $L^1_{x,v}$ norm bounded by $C2^{j(\frac{1}{2p}-\epsilon-\frac{2\epsilon'd}{p(p+\epsilon')})}$ in the third inequality. This is not so obvious, and we state it as a lemma, whose proof is given in the next section.
	\begin{lemma}
		\label{Aux_Lemma}
		The following inequality
		\begin{flalign*}
			\left\Vert \iint  e^{i(\xi\cdot x+\mu\cdot v)} \langle \xi \rangle^{\frac{1}{2p}-\epsilon-\frac{2\epsilon'd}{p(p+\epsilon')}}\psi_{j-3}(\xi, \mu) d\xi d\mu \right\Vert_{L^1_{x,v}}\leq C2^{(\frac{1}{2p}-\epsilon-\frac{2\epsilon'd}{p(p+\epsilon')})j}
		\end{flalign*} 
		holds where $C$ is independent of $j$.
	\end{lemma}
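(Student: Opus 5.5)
The plan is to prove the bound by scaling, after splitting the multiplier into a part that only sees the scale $2^{j}$ and a part that is handled by a standard Fourier-multiplier argument.

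Set $\gamma:=\frac{1}{2p}-\epsilon-\frac{2\epsilon'd}{p(p+\epsilon')}$. Since $\epsilon,\epsilon'$ are small, $0<\gamma<1$. Let $m_j(\xi,\mu):=\langle\xi\rangle^{\gamma}\psi_{j-3}(\xi,\mu)$. Because $\psi_{j-3}=\psi_0(2^{-(j-3)}\cdot)$, we rescale $(\xi,\mu)=2^{j-3}(\xi',\mu')$. Since dilations preserve the $L^1$ norm of the inverse Fourier transform, this gives
\begin{flalign*}
\Vert \mathcal{F}^{-1}m_j\Vert_{L^1_{x,v}}=\Vert \mathcal{F}^{-1}\tilde m_j\Vert_{L^1_{x,v}},\qquad \tilde m_j(\xi',\mu'):=\langle 2^{j-3}\xi'\rangle^{\gamma}\psi_0(\xi',\mu').
\end{flalign*}
Factoring out $2^{(j-3)\gamma}$, it then suffices to show that $a_j(\xi',\mu'):=2^{-(j-3)\gamma}\langle 2^{j-3}\xi'\rangle^{\gamma}\psi_0(\xi',\mu')$ has inverse Fourier transform bounded in $L^1$ uniformly in $j\ge4$. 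Here $\psi_0$ is smooth and supported in a fixed ball $|(\xi',\mu')|\le 8/3$.

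The function $b_j(\xi'):=(2^{-2(j-3)}+|\xi'|^2)^{\gamma/2}$ is not uniformly smooth near $\xi'=0$; it converges to $|\xi'|^\gamma$, which is only Hölder continuous. So we cannot simply bound derivatives of order $>(2d)/2$ uniformly. Instead we use a Bernstein/Sobolev-type criterion for $\Vert\mathcal{F}^{-1}a\Vert_{L^1}$. We write $a_j=\psi_0(\xi',\mu')\,b_j(\xi')$. Since $\mathcal F^{-1}$ of a product of a function of $\xi'$ alone with $\psi_0$ is a convolution, and $\Vert \mathcal{F}^{-1}\psi_0\Vert_{L^1}<\infty$, it is enough to control $b_j\chi(\xi')$ for a cutoff $\chi\in C_c^\infty(\mathbb{R}^d)$ with $\chi=1$ on $|\xi'|\le 8/3$ (so that $\psi_0\,b_j=\psi_0\,\chi b_j$). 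This reduces everything to showing
\begin{flalign*}
\sup_{j}\Vert\mathcal{F}^{-1}_{\xi'}(\chi b_j)\Vert_{L^1(\mathbb{R}^d)}<\infty .
\end{flalign*}

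For this $d$-dimensional bound we decompose dyadically: $\chi b_j=\chi b_j\,\theta_{\le -j}+\sum_{-j<l\le 2}\chi b_j\,\phi_l(\xi')$. On the annulus $|\xi'|\sim2^l$ with $l>-j$, the function $b_j$ behaves like a symbol of order $\gamma$, with $|\partial^\alpha b_j|\lesssim 2^{l(\gamma-|\alpha|)}$. Rescaling that piece to unit frequency and integrating by parts $d+1$ times gives an $L^1$ norm $\lesssim 2^{l\gamma}$, which is summable over $l\le2$ because $\gamma>0$. On the ball $|\xi'|\lesssim2^{-j}$, the function $b_j$ is smooth at scale $2^{-j}$ with size $2^{-j\gamma}$, so its piece has $L^1$ norm $\lesssim 2^{-j\gamma}$. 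Summing the pieces gives a bound independent of $j$, and together with the reductions above this proves the lemma with $C$ independent of $j$.

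The main obstacle is the non-smoothness of $|\xi'|^\gamma$ at the origin after rescaling, which rules out a naive symbol estimate. The dyadic decomposition is exactly what handles it, and the positivity of $\gamma$ (smallness of $\epsilon,\epsilon'$) is what makes the sum over $l$ converge.
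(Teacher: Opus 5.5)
Your proposal is correct and follows essentially the same route as the paper. You split off the $(\xi,\mu)$-cutoff by a convolution, scale out the factor $2^{\gamma j}$, and dyadically decompose the $d$-dimensional symbol $(2^{-2j}+|\eta|^2)^{\gamma/2}$ into a ball of radius $\sim 2^{-j}$ plus annuli whose $L^1$ contributions sum because $\gamma>0$. The differences are only cosmetic: you localize with a compact cutoff $\chi$ and read $\mathcal{F}^{-1}$ of a $\xi'$-only symbol as $g(x)\otimes\delta_0(v)$, where the paper inserts $\zeta(\xi)\zeta(\mu)$ and divides by $(1+|\eta|^2)^{d}$. You also bound each annular piece by rescaling it to unit frequency (giving $2^{l\gamma}$), whereas the paper interpolates between $|x|^{-d}$ and $|x|^{-d-1}$ decay bounds (giving $2^{-k\gamma/2}$).
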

	We proceed as
	\begin{flalign*}
		\sum_{k\in\mathbb{Z}}&\left\Vert \phi_k(D_{x,y})\Lambda^s_{x,v}\left(\sum_{j\geq 4}\psi_{j-3}(D_{x,v})\nabla_vf\cdot \phi_j(D_x) A \right)\right\Vert_{L^p_{x,v}}\\
		&\leq C\Vert f \Vert_{H^{s,p}_{x,v}}\sum_{j\geq 4}2^{-j(\frac{\epsilon'd}{p(p+\epsilon')})} 2^{j(s+\frac{1}{2p}-\epsilon-\frac{\epsilon'd}{p(p+\epsilon')})} \Vert \phi_j(D_x) \nabla_x A\Vert_{L^{p+\epsilon'}_x}\\
		&\leq  C\Vert f \Vert_{H^{s,p}_{x,v}}\left( \sum_{j\geq4} 2^{-j(\frac{\epsilon'd}{p(p+\epsilon')})\frac{p}{p-1}} \right)^{1-\frac{1}{p+\epsilon'}} \left( \sum_{j\geq 4}2^{jp(s+\frac{1}{2p}-\epsilon-\frac{\epsilon'd}{p(p+\epsilon')})} \Vert \phi_j(D_x) \nabla_x A\Vert^p_{L^p_x} \right)^\frac{1}{p+\epsilon'}\\&\leq C\Vert f \Vert_{H^{s,p}_{x,v}} \Vert \nabla_x A \Vert_{(F^{s+\frac{1}{2p}-\epsilon-\frac{\epsilon'd}{p(p+\epsilon')}}_{p+\epsilon',p+\epsilon'})_x}\\
		&\leq C\Vert f \Vert_{H^{s,p}_{x,v}} \Vert \nabla_x A \Vert_{H^{s+\frac{1}{2p}-\epsilon-\frac{\epsilon'd}{p(p+\epsilon')}, p+\epsilon'}_x}\leq C\Vert f \Vert_{H^{s,p}_{x,v}} \Vert A \Vert_{H^{s+1+\frac{1}{2p}-\epsilon-\frac{\epsilon'd}{p(p+\epsilon')}, p+\epsilon'}_x}
	\end{flalign*}
	
	\subsection{Term $\mathcal{T}_4$} 
	We write it as
	\begin{flalign*}
		\mathcal{T}_4= \Lambda^s_{x,v}\left( \sum_{\substack{\max(j, k)\geq 4 \\ \vert j-k\vert \leq 2}} \phi_j(D_{x, v})\nabla_v f\cdot \phi_k(D_x)A + \psi_3(D_{x,v})\nabla_v f\cdot \psi_3(D_x)A\right).
	\end{flalign*}
	We estimate each term. First, since the Fourier support of the function $\phi_j(D_{x, v})\nabla_v f\cdot \phi_k(D_x)A$ lies inside the ball $\vert(\xi, \mu)\vert\leq 2^{2+\max(j,k)}$ ,we have
	\begin{flalign*}
		&\Vert \Lambda^s_{x,v}\left( \phi_j(D_{x, v})\nabla_v f\cdot \phi_k(D_x)A \right) \Vert_{L^p_{x,v}} \leq C\cdot 2^{\max(j,k)s} \Vert  \phi_j(D_{x, v})\nabla_v f\cdot \phi_k(D_x)A \Vert_{L^p_{x,v}}\\& \leq C\cdot 2^{\max(j,k)s} \Vert \phi_j(D_{x, v})\nabla_v f \Vert_{L^p_{x,v}}\Vert \phi_k(D_x)A \Vert_{L^\infty_{x}} \\&\leq C\cdot 2^{\max(j,k)s} \cdot 2^j \Vert \phi_j(D_{x, v}) f \Vert_{L^p_{x,v}}  2^{\frac{k}{p+\epsilon'}} \cdot\Vert \phi_k(D_x)A \Vert_{L^{p+\epsilon'}_{x}}\\
		&\leq C\cdot 2^{\max(j,k)s} \cdot 2^{j-k} \Vert \phi_j(D_{x, v}) f \Vert_{L^p_{x,v}}  2^{\frac{k}{p+\epsilon'}} \cdot\Vert \phi_k(D_x)\nabla_xA \Vert_{L^{p+\epsilon'}_{x}}
	\end{flalign*}
	and therefore, we have
	\begin{flalign*}
		&\sum_{\substack{\max(j, k)\geq 4 \\ \vert j-k\vert \leq 2}}\Vert \Lambda^s_{x,v}\left( \phi_j(D_{x, v})\nabla_v f\cdot \phi_k(D_x)A \right) \Vert_{L^p_{x,v}} \\&\leq C \sum_{l=-2}^2\sum_{j=2}^\infty 2^{-j(s-\frac{1}{2p}-\epsilon-\frac{\epsilon'd}{p(p+\epsilon')})}  \left(2^{js}\Vert \phi_j(D_{x, v}) f \Vert_{L^p_{x,v}}\right)\cdot\left(2^{(j+l)(s+1+\frac{1}{2p}-\epsilon-\frac{\epsilon'd}{p(p+\epsilon')})} \Vert \phi_{j+l}(D_x) A \Vert_{L^{p+\epsilon'}_{x}} \right)\\&\leq  C \sum_{l=-2}^2 \left( \sum_{j=2}^\infty   2^{jps}\Vert \phi_j(D_{x, v}) f \Vert^p_{L^p_{x,v}} \right)^\frac{1}{p} \cdot \left(\sum_{j=2}^\infty 2^{(j+l)p(s+1+\frac{1}{2p}-\epsilon-\frac{\epsilon'd}{p(p+\epsilon')})} \Vert \phi_{j+l}(D_x) A \Vert^{p+\epsilon'}_{L^{p+\epsilon'}_{x}} \right)^\frac{1}{p+\epsilon'} \\&\times\Vert 2^{-j(s+\frac{1}{2p}-\epsilon-\frac{\epsilon'd}{p(p+\epsilon')})} \Vert_{l^\frac{1}{1-\frac{1}{p}-\frac{1}{p+\epsilon'}}(\mathbb{N})} \\&\leq C\Vert f \Vert_{H^{s,p}_{x,v}}\Vert A \Vert_{H^{s+1+\frac{1}{2p}-\epsilon-\frac{\epsilon'd}{p(p+\epsilon')},p+\epsilon'}_{x}}.
	\end{flalign*}
	The only term left to estimate is
	\begin{flalign*}
		\Lambda^s_{x,v}\psi_3(D_{x,v})\nabla_v f\cdot \psi_3(D_x)A.
	\end{flalign*}
	This can be estimated as follows:
	\begin{flalign*}
		\Vert \Lambda^s_{x,v}\psi_3(D_{x,v})\nabla_v f\cdot \psi_3(D_x)A \Vert_{L^p_{x,v}} &\leq C\Vert \psi_3(D_{x,v})\nabla_v f\cdot \psi_3(D_x)A \Vert_{L^p_{x,v}} \leq C \Vert \psi_3(D_{x,v})\nabla_v f  \Vert_{L^p_{x,v}}\Vert \psi_3(D_x)A \Vert_{L^\infty_x}\\
		&\leq C \Vert \psi_3(D_{x,v}) f  \Vert_{H^{s,p}_{x,v}}\Vert \psi_3(D_x)A \Vert_{H^{s+\frac{1}{2p}-\epsilon-\frac{\epsilon'd}{p(p+\epsilon')},p+\epsilon'}_x}  \\&\leq C\Vert  f  \Vert_{H^{s, p}_{x,v}}\Vert A \Vert_{H^{s+\frac{1}{2p}-\epsilon-\frac{\epsilon'd}{p(p+\epsilon')},p+\epsilon'}_x}
	\end{flalign*}
	and we are done.
	\section{Proof of Lemma \ref{Aux_Lemma}}
	\label{Aux_Lemma_Proof}
	In this section, we prove Lemma \ref{Aux_Lemma}.
	\begin{proof}[proof of Lemma \ref{Aux_Lemma}]
		Let $\zeta_j(\cdot)=\zeta(2^{j}\cdot)$, where $\zeta\in C^\infty_c(\mathbb{R}^d)$ is a radial function such that $$\zeta(\xi)\zeta(\mu)\psi(\xi, \mu)=\psi(\xi, \mu)$$ for all $(\xi, \mu)$. We write
		\begin{flalign*}
			\iint  e^{i(\xi\cdot x+\mu\cdot v)} \langle \xi \rangle^{\frac{1}{2p}-\epsilon-\frac{2\epsilon'd}{p(p+\epsilon')}}\psi_{j}(\xi, \mu) d\xi d\mu = \iint  e^{i(\xi\cdot x+\mu\cdot v)} \langle \xi \rangle^{\frac{1}{2p}-\epsilon-\frac{2\epsilon'd}{p(p+\epsilon')}}\zeta_{j}(\xi)\zeta_{j}(\mu)\psi_{j}(\xi, \mu) d\xi d\mu,
		\end{flalign*}
		which a (up to multiplication by a constant) convolution of two functions $\int e^{i(\xi\cdot x+\mu\cdot v)} \psi_{j}(\xi, \mu) d\xi$ and $\int e^{i(\xi\cdot x+\mu\cdot v)} \langle \xi \rangle^{\frac{1}{2p}-\epsilon-\frac{2\epsilon'd}{p(p+\epsilon')}}\zeta_{j}(\xi)\zeta_{j}(\mu) d\xi$. Therefore, it suffices to bound $L^1$ norms of these terms. 
		
		The first one is just $2^{2jd}\check{\psi}(2^jd x, 2^jd v)$, whose $L^1$ norm is finite and independent of $j$.
		
		For the second one, we have
		\begin{flalign*}
			&\iint e^{i(\xi\cdot x+\mu\cdot v)} (1+\vert\xi\vert^2)^{\frac{\frac{1}{2p}-\epsilon-\frac{2\epsilon'd}{p(p+\epsilon')}}{2}}\zeta_{j}(\xi)\zeta_j(\mu) d\xi d\mu = \iint e^{i(\xi\cdot x+\mu\cdot v)} (1+\vert\xi\vert^2)^\frac{\frac{1}{2p}-\epsilon-\frac{2\epsilon'd}{p(p+\epsilon')}}{2}\zeta(2^{-j}\xi)\zeta(2^{-j}\mu) d\xi \\&= 2^{2jd}\iint e^{2^j i(\eta\cdot x+\nu\cdot v)} (1+\vert2^j\eta\vert^2)^{\frac{\frac{1}{2p}-\epsilon-\frac{2\epsilon'd}{p(p+\epsilon')}}{2}}\zeta(\eta)\zeta(\nu) d\eta d\nu \\&= 2^{(\frac{1}{2p}-\epsilon-\frac{2\epsilon'd}{p(p+\epsilon')})j}\cdot 2^{2jd}\iint e^{2^ji(\eta\cdot x_\nu\cdot v)} (2^{-2j}+\vert\eta\vert^2)^{\frac{\frac{1}{2p}-\epsilon-\frac{2\epsilon'd}{p(p+\epsilon')}}{2}}\zeta(\eta)\zeta(\nu) d\eta d\nu.
		\end{flalign*}
		It remains to obtain a uniform bound on $$\left\Vert 2^{2jd}\iint e^{2^j i(\eta\cdot x+\nu\cdot v)} (2^{-2j}+\vert\eta\vert^2)^{\frac{\frac{1}{2p}-\epsilon-\frac{2\epsilon'd}{p(p+\epsilon')}}{2}}\zeta(\eta)\zeta(\nu) d\eta d\nu \right\Vert_{L^1_{x,v}}.$$ To do this, we manipulate it as follows:
		\begin{flalign*}
			&\left\Vert 2^{2jd}\iint e^{2^j i(\eta\cdot x+\nu\cdot v)} (2^{-2j}+\vert\eta\vert^2)^{\frac{\frac{1}{2p}-\epsilon-\frac{2\epsilon'd}{p(p+\epsilon')}}{2}}\zeta(\eta)\zeta(\nu) d\eta d\nu \right\Vert_{L^1_{x,v}}\\&=\left\Vert \iint e^{i(\eta\cdot x+\nu\cdot v)} (2^{-2j}+\vert\eta\vert^2)^{\frac{\frac{1}{2p}-\epsilon-\frac{2\epsilon'd}{p(p+\epsilon')}}{2}}\zeta(\eta)\zeta(\nu) d\eta d\nu \right\Vert_{L^1_x}\\&=\left\Vert \iint e^{i(\eta\cdot x+\nu\cdot v)} \frac{(2^{-2j}+\vert\eta\vert^2)^{\frac{\frac{1}{2p}-\epsilon-\frac{2\epsilon'd}{p(p+\epsilon')}}{2}}}{(1+\vert \eta\vert^2)^{d}}\cdot(1+\vert \eta\vert^2)^{d}\zeta(\eta)\zeta(\nu) d\eta d\nu \right\Vert_{L^1_x}\\&\leq C\left\Vert \int e^{i\eta\cdot x} \frac{(2^{-2j}+\vert\eta\vert^2)^{\frac{\frac{1}{2p}-\epsilon-\frac{2\epsilon'd}{p(p+\epsilon')}}{2}}}{(1+\vert \eta\vert^2)^{d}} d\eta \right\Vert_{L^1_x}\left\Vert \int e^{i\eta\cdot x} (1+\vert \eta\vert^2)^{d}\zeta(\eta) d\eta \right\Vert_{L^1_x}\\&\leq C\left\Vert \int e^{i\eta\cdot x} \frac{(2^{-2j}+\vert\eta\vert^2)^{\frac{\frac{1}{2p}-\epsilon-\frac{2\epsilon'd}{p(p+\epsilon')}}{2}}}{(1+\vert \eta\vert^2)^{d}} d\eta \right\Vert_{L^1_x}\left\Vert (1-\Delta_x)^{d} \check{\zeta} \right\Vert_{L^1_x}.
		\end{flalign*}
		All we have to do now is to bound the $L^1$ norm of the following Kernel, uniformly in $j$:
		\begin{flalign*}
			K_{j}(x)=\int_{\mathbb{R}^d} e^{i\eta\cdot x} \frac{(2^{-2j}+\vert\eta\vert^2)^{\frac{\frac{1}{2p}-\epsilon-\frac{2\epsilon'd}{p(p+\epsilon')}}{2}}}{(1+\vert \eta\vert^2)^{d}} d\eta=\int_{\mathbb{R}^d} e^{i\eta\cdot x} \frac{(2^{-2j}+\vert\eta\vert^2)^{\frac{\alpha}{2}}}{(1+\vert \eta\vert^2)^{d}} d\eta
		\end{flalign*}
		where $\alpha=\frac{1}{2p}-\epsilon-\frac{2\epsilon'd}{p(p+\epsilon')}>0$.
		To prove this, we use another Littlewood-Paley decomposition. Let $\chi$ be a smooth function supported on $[0, \frac{8}{3}]$, taking values between $0$ and $1$, and $\chi(\eta)=1$ for $\vert \eta\vert\leq  \frac{3}{2}$. Then, we decompose the kernel as follows:
		\begin{flalign*}
			K_{j}(x)&=\int_{\mathbb{R}^d} e^{i\eta\cdot x} \frac{(2^{-2j}+\vert\eta\vert^2)^{\frac{\alpha}{2}}}{(1+\vert \eta\vert^2)^{d}}\chi(2^{j-2}\eta) d\eta+\sum_{k=0}^{j-3}\int_{\mathbb{R}^d} e^{i\eta\cdot x} \frac{(2^{-2j}+\vert\eta\vert^2)^{\frac{\alpha}{2}}}{(1+\vert \eta\vert^2)^{d}} \left( \chi(2^k\eta)-\chi(2^{k+1}\eta) \right)d\eta \\&+\int_{\mathbb{R}^d} e^{i\eta\cdot x} \frac{(2^{-2j}+\vert\eta\vert^2)^{\frac{\alpha}{2}}}{(1+\vert \eta\vert^2)^{d}} \left( 1-\chi(\eta) \right) d\eta.
		\end{flalign*}
		First, let us consider the first kernel:
		\begin{flalign*}
			\int_{\mathbb{R}^d} e^{i\eta\cdot x} \frac{(2^{-2j}+\vert\eta\vert^2)^{\frac{\alpha}{2}}}{(1+\vert \eta\vert^2)^{d}}\chi(2^{j-2}\eta) d\eta.
		\end{flalign*}
		Its $L^1$ norm is bounded as follows:
		\begin{flalign*}
			&\left\Vert\int_{\mathbb{R}^d} e^{i\eta\cdot x} \frac{(2^{-2j}+\vert\eta\vert^2)^{\frac{\alpha}{2}}}{(1+\vert \eta\vert^2)^{d}}\chi(2^{j-2}\eta) d\eta\right\Vert_{L^1(\mathbb{R}^d)}	\\&\leq C \left\Vert\int_{\mathbb{R}^d} e^{i\eta\cdot x} \frac{1}{(1+\vert \eta\vert^2)^{d}} d\eta\right\Vert_{L^1(\mathbb{R}^d)}\left\Vert\int_{\mathbb{R}^d} e^{i\eta\cdot x} (2^{-2j}+\vert\eta\vert^2)^{\frac{\alpha}{2}}\chi(2^{j-2}\eta) d\eta\right\Vert_{L^1(\mathbb{R}^d)}\\&\leq C 2^{-j\alpha} \left\Vert\int_{\mathbb{R}^d} e^{i\eta\cdot x} \frac{1}{(1+\vert \eta\vert^2)^{d}} d\eta\right\Vert_{L^1(\mathbb{R}^d)}\left\Vert\int_{\mathbb{R}^d} e^{i\eta\cdot x} (1+\vert\eta\vert^2)^{\frac{\alpha}{2}}\chi(4\eta) d\eta\right\Vert_{L^1(\mathbb{R}^d)}
		\end{flalign*}
		The term $$ \left\Vert\int_{\mathbb{R}^d} e^{i\eta\cdot x} \frac{1}{(1+\vert \eta\vert^2)^{d}} d\eta\right\Vert_{L^1(\mathbb{R}^d)}$$ is finite by the following analysis of the Kernel:
		\begin{flalign*}
			\left\vert \int_{\mathbb{R}^d} e^{i\eta\cdot x} \frac{1}{(1+\vert \eta\vert^2)^{d}} d\eta \right\vert \leq \int_{\mathbb{R}^d}  \frac{1}{(1+\vert \eta\vert^2)^{d}} d\eta =C<\infty,
		\end{flalign*}
		and
		\begin{flalign*}
			&\left\vert \int_{\mathbb{R}^d} e^{i\eta\cdot x} \frac{1}{(1+\vert \eta\vert^2)^{d}} d\eta \right\vert = \left\vert \int_{\mathbb{R}^d} \left( \frac{x\cdot\nabla_\eta}{i\vert x\vert^2} \right)^{d+1}(e^{i\eta\cdot x}) \frac{1}{(1+\vert \eta\vert^2)^{d}} d\eta \right\vert = \left\vert \int_{\mathbb{R}^d} e^{i\eta\cdot x} \left( \frac{x\cdot\nabla_\eta}{i\vert x\vert^2} \right)^{d+1} \left(\frac{1}{(1+\vert \eta\vert^2)^{d}}\right) d\eta \right\vert \\& \leq C\frac{1}{\vert x\vert^{d+1}} \int_{\mathbb{R}^d} \left\vert \nabla_{\eta}^{d+1}\left(\frac{1}{(1+\vert \eta\vert^2)^{d}}\right) \right\vert d\eta = C<\infty,
		\end{flalign*}
		and therefore, satisfies 
		\begin{flalign*}
			\left\vert \int_{\mathbb{R}^d} e^{i\eta\cdot x} \frac{1}{(1+\vert \eta\vert^2)^{d}} d\eta \right\vert \leq \frac{C}{1+\vert x\vert^{d+1}},
		\end{flalign*}
		which is integrable.
		
		The other term
		\begin{flalign*}
			\left\Vert\int_{\mathbb{R}^d} e^{i\eta\cdot x} (1+\vert\eta\vert^2)^{\frac{\alpha}{2}}\chi(4\eta) d\eta\right\Vert_{L^1(\mathbb{R}^d)}
		\end{flalign*}
		is also finite, since $$\int_{\mathbb{R}^d} e^{i\eta\cdot x} (1+\vert\eta\vert^2)^{\frac{\alpha}{2}}\chi(4\eta) d\eta$$ is a Fourier transform of a Schwartz function and thus in $L^1(\mathbb{R}^d)$.
		
		The next step is to analyze the final term:
		\begin{flalign*}
			\int_{\mathbb{R}^d} e^{i\eta\cdot x} \frac{(2^{-2j}+\vert\eta\vert^2)^{\frac{\alpha}{2}}}{(1+\vert \eta\vert^2)^{d}} \left( 1-\chi(\eta) \right) d\eta
		\end{flalign*}
		Again, we have
		\begin{flalign*}
			&\left\vert \int_{\mathbb{R}^d} e^{i\eta\cdot x} \frac{(2^{-2j}+\vert\eta\vert^2)^{\frac{\alpha}{2}}}{(1+\vert \eta\vert^2)^{d}} \left( 1-\chi(\eta) \right) d\eta \right\vert \leq \int_{\mathbb{R}^d}  \frac{(2^{-2j}+\vert\eta\vert^2)^{\frac{\alpha}{2}}}{(1+\vert \eta\vert^2)^{d}} \left( 1-\chi(\eta) \right) d\eta  \leq \int_{\mathbb{R}^d}  (1+\vert \eta\vert^2)^{\frac{\alpha}{2}-d} d\eta \\&=C<\infty,
		\end{flalign*}
		and
		\begin{flalign*}
			&\left\vert \int_{\mathbb{R}^d} e^{i\eta\cdot x} \frac{(2^{-2j}+\vert\eta\vert^2)^{\frac{\alpha}{2}}}{(1+\vert \eta\vert^2)^{d}} \left( 1-\chi(\eta) \right) d\eta \right\vert = \left\vert \int_{\mathbb{R}^d} e^{i\eta\cdot x}\left( \frac{x\cdot\nabla_\eta}{i\vert x\vert^2} \right)^{d+1}\left( \frac{(2^{-2j}+\vert\eta\vert^2)^{\frac{\alpha}{2}}}{(1+\vert \eta\vert^2)^{d}} \left( 1-\chi(\eta) \right) \right) d\eta\right\vert\\&\leq \int_{\mathbb{R}^d} \left\vert \left( \frac{x\cdot\nabla_\eta}{i\vert x\vert^2} \right)^{d+1}\left( \frac{(2^{-2j}+\vert\eta\vert^2)^{\frac{\alpha}{2}}}{(1+\vert \eta\vert^2)^{d}} \left( 1-\chi(\eta) \right) \right) \right\vert d\eta \leq \frac{C}{\vert x\vert^{d+1}} \int_{\mathbb{R}^d} \left\vert \nabla^{d+1}_\eta \left( \frac{(2^{-2j}+\vert\eta\vert^2)^{\frac{\alpha}{2}}}{(1+\vert \eta\vert^2)^{d}} \left( 1-\chi(\eta) \right) \right)  \right\vert d\eta \\&\leq \frac{C}{\vert x\vert^{d+1}} \int_{\vert\eta\vert\geq 1} \sum_{m=0}^{d+1} \left\vert \nabla^m_\eta \left( (2^{-2j}+\vert\eta\vert^2)^{\frac{\alpha}{2}} \right) \right\vert \left\vert \nabla^{d+1-m}_\eta \left( \frac{1-\chi(\eta)}{(1+\vert\eta\vert^2)^d} \right) \right\vert d\eta
		\end{flalign*}
		One observes that the components of 
		\begin{flalign*}
			\nabla^m_\eta \left( (2^{-2j}+\vert\eta\vert^2)^{\frac{\alpha}{2}} \right)
		\end{flalign*}
		are of the form
		\begin{flalign*}
			\sum_{i=\lceil \frac{m}{2} \rceil}^m p_{m,i}(\eta) (2^{-2j}+\vert \eta\vert^2)^{\frac{\alpha}{2}-i}
		\end{flalign*}
		where $p_{m, i}$ are homogeneous polynomial in $\eta$ of order $2i-m$, which is independent of $j$. Since $(2^{-2j}+\vert \eta\vert^2)^{\frac{\alpha}{2}-i}\leq \vert \eta\vert^{\alpha-2i} $ for $i\geq 1$ and $(2^{-2j}+\vert \eta\vert^2)^{\frac{\alpha}{2}}\leq \vert (1+\vert \eta\vert^2)\vert^{\frac{\alpha}{2}}$, we see that
		\begin{flalign*}
			\left\vert \nabla^m_\eta \left( (2^{-2j}+\vert\eta\vert^2)^{\frac{\alpha}{2}} \right) \right\vert \leq C \vert \eta\vert^{\alpha-m}
		\end{flalign*}
		for $m\geq 1$. Thus, we have
		\begin{flalign*}
			\left\vert \int_{\mathbb{R}^d} e^{i\eta\cdot x} \frac{(2^{-2j}+\vert\eta\vert^2)^{\frac{\alpha}{2}}}{(1+\vert \eta\vert^2)^{d}} \left( 1-\chi(\eta) \right) d\eta \right\vert &\leq \frac{C}{\vert x\vert^{m+1}} \int_{\vert\eta\vert\geq 1} \sum_{m=0}^{d+1} \vert\eta\vert^{\alpha-m} \left\vert \nabla^{d+1-m}_\eta \left( \frac{1-\chi(\eta)}{(1+\vert\eta\vert^2)^d} \right) \right\vert d\eta
		\end{flalign*}
		We see that the integral on the right hand side is finite. Thus, we see that
		\begin{flalign*}
			\left\vert \int_{\mathbb{R}^d} e^{i\eta\cdot x} \frac{(2^{-2j}+\vert\eta\vert^2)^{\frac{\alpha}{2}}}{(1+\vert \eta\vert^2)^{d}} \left( 1-\chi(\eta) \right) d\eta \right\vert \leq \frac{C}{1+\vert x\vert^{m+1}}
		\end{flalign*}
		whose right hand side is integrable. 
		
		Finally, we analyze terms in the summand:
		\begin{flalign*}
			\int_{\mathbb{R}^d} e^{i\eta\cdot x} \frac{(2^{-2j}+\vert\eta\vert^2)^{\frac{\alpha}{2}}}{(1+\vert \eta\vert^2)^{d}} \left( \chi(2^k\eta)-\chi(2^{k+1}\eta) \right)d\eta
		\end{flalign*}
		for $0\leq k\leq j-3$.
		
		First, the $L^\infty$-bound can be obtained\footnote{Note that the estimate is very crude, but the reason for this setting appears in the proof of decay.}:
		\begin{flalign*}
			\left\vert \int_{\mathbb{R}^d} e^{i\eta\cdot x} \frac{(2^{-2j}+\vert\eta\vert^2)^{\frac{\alpha}{2}}}{(1+\vert \eta\vert^2)^{d}} \left( \chi(2^k\eta)-\chi(2^{k+1}\eta) \right)d\eta \right\vert \leq \int_{\frac{3}{4}\cdot2^{-k}\leq \vert\eta\vert\leq \frac{8}{3}\cdot2^{-k}}  (2^{-2j}+\vert\eta\vert^2)^{\frac{\alpha}{2}} d\eta=C 2^{-\frac{k\alpha}{2}}
		\end{flalign*}
		Second, we obtain:
		\begin{flalign*}
			&\left\vert \int_{\mathbb{R}^d} e^{i\eta\cdot x} \frac{(2^{-2j}+\vert\eta\vert^2)^{\frac{\alpha}{2}}}{(1+\vert \eta\vert^2)^{d}} \left( \chi(2^k\eta)-\chi(2^{k+1}\eta) \right)d\eta \right\vert \\&= \left\vert \int_{\mathbb{R}^d} e^{i\eta\cdot x} \left( \frac{x\cdot\nabla_\eta}{i\vert x\vert^2} \right)^{d+1}\left(\frac{(2^{-2j}+\vert\eta\vert^2)^{\frac{\alpha}{2}}}{(1+\vert \eta\vert^2)^{d}} \left( \chi(2^k\eta)-\chi(2^{k+1}\eta) \right)\right)d\eta \right\vert\\
			&\leq \frac{C}{\vert x\vert^{d+1}}\int_{\mathbb{R}^d} \left\vert \nabla^{d+1}_{\eta} \left( \frac{(2^{-2j}+\vert\eta\vert^2)^{\frac{\alpha}{2}}}{(1+\vert \eta\vert^2)^{d}} \left( \chi(2^k\eta)-\chi(2^{k+1}\eta) \right) \right)  \right\vert d\eta \\
			&\leq \frac{C}{\vert x\vert^{d+1}}\int_{\mathbb{R}^d}  \sum_{l+m+n=d+1} \left\vert \nabla^l_\eta\left( 2^{-2j}+\vert\eta\vert^2 \right)^\frac{\alpha}{2} \right\vert\left\vert \nabla^{m}_\eta \left(\frac{1}{(1+\vert \eta\vert^2)^d}\right) \right\vert \left\vert \nabla_\eta^n\left( \chi(2^k\eta)-\chi(2^{k+1}\eta) \right)  \right\vert    d\eta
		\end{flalign*}
		Now, we observe the following facts:
		\begin{enumerate}
			\item The integrand is supported in $\frac{3}{4}\cdot2^{-k}\leq \vert\eta\vert\leq \frac{8}{3}\cdot2^{-k}$.
			\item In the support,
			\begin{flalign*}
				\left\vert \nabla^{m}_\eta \left(\frac{1}{(1+\vert \eta\vert^2)^d}\right) \right\vert
			\end{flalign*}
			is just bounded, uniformly in $0\leq k\leq j-3$.
			\item In the support,
			\begin{flalign*}
				\left\vert\nabla^l_\eta\left( 2^{-2j}+\vert\eta\vert^2 \right)^\frac{\alpha}{2} \right\vert \leq C_l 2^{k(l-\alpha)}.
			\end{flalign*}
			and
			\begin{flalign*}
				\left\vert \nabla_\eta^n\left( \chi(2^k\eta)-\chi(2^{k+1}\eta) \right)  \right\vert \leq C_n 2^{kn}
			\end{flalign*}
		\end{enumerate}
		Therefore, we have that
		\begin{flalign*}
			&\left\vert \int_{\mathbb{R}^d} e^{i\eta\cdot x} \frac{(2^{-2j}+\vert\eta\vert^2)^{\frac{\alpha}{2}}}{(1+\vert \eta\vert^2)^{d}} \left( \chi(2^k\eta)-\chi(2^{k+1}\eta) \right)d\eta \right\vert \\
			&\leq \frac{C}{\vert x\vert^{d+1}}\int_{\mathbb{R}^d}  \sum_{l+m+n=d+1} \left\vert \nabla^l_\eta\left( 2^{-2j}+\vert\eta\vert^2 \right)^\frac{\alpha}{2} \right\vert\left\vert \nabla^{m}_\eta \left(\frac{1}{(1+\vert \eta\vert^2)^d}\right) \right\vert \left\vert \nabla_\eta^n\left( \chi(2^k\eta)-\chi(2^{k+1}\eta) \right)  \right\vert    d\eta\\
			&\leq \frac{C}{\vert x\vert^{d+1}}\int_{\frac{3}{4}\cdot2^{-k}\leq \vert\eta\vert\leq \frac{8}{3}\cdot2^{-k}}  \sum_{l+m+n=d+1} C_lC_n 2^{k(l+n-\alpha)} d\eta \leq \frac{C}{\vert x\vert^{d+1}} \cdot 2^{k(1-\alpha)}
		\end{flalign*}
		where $C$ is independent of $k$.
		
		Third, we obtain:
		\begin{flalign*}
			&\left\vert \int_{\mathbb{R}^d} e^{i\eta\cdot x} \frac{(2^{-2j}+\vert\eta\vert^2)^{\frac{\alpha}{2}}}{(1+\vert \eta\vert^2)^{d}} \left( \chi(2^k\eta)-\chi(2^{k+1}\eta) \right)d\eta \right\vert=\left\vert \int_{\mathbb{R}^d} e^{i\eta\cdot x}\left( \frac{x\cdot\nabla_\eta}{i\vert x\vert^2} \right)^{d}\left( \frac{(2^{-2j}+\vert\eta\vert^2)^{\frac{\alpha}{2}}}{(1+\vert \eta\vert^2)^{d}} \left( 1-\chi(\eta) \right) \right) d\eta\right\vert\\&
			\leq \frac{C}{\vert x\vert^{d}}\int_{\mathbb{R}^d}  \sum_{l+m+n=d} \left\vert \nabla^l_\eta\left( 2^{-2j}+\vert\eta\vert^2 \right)^\frac{\alpha}{2} \right\vert\left\vert \nabla^{m}_\eta \left(\frac{1}{(1+\vert \eta\vert^2)^d}\right) \right\vert \left\vert \nabla_\eta^n\left( \chi(2^k\eta)-\chi(2^{k+1}\eta) \right)  \right\vert    d\eta
		\end{flalign*}
		and by the same analysis we have done in the above, we see that the right hand side is bounded by
		\begin{flalign*}
			\frac{C}{\vert x\vert^{d}} \cdot 2^{-k\alpha}
		\end{flalign*}
		By combining the second and the third argument, we see that the following holds:
		\begin{flalign*}
			\vert x\vert^{d+\frac{\alpha}{2}}\left\vert \int_{\mathbb{R}^d} e^{i\eta\cdot x} \frac{(2^{-2j}+\vert\eta\vert^2)^{\frac{\alpha}{2}}}{(1+\vert \eta\vert^2)^{d}} \left( \chi(2^k\eta)-\chi(2^{k+1}\eta) \right)d\eta \right\vert \leq C 2^{-\frac{k\alpha}{2}}
		\end{flalign*}
		where $C$ is independent of $k$.
		Therefore, we see that
		\begin{flalign*}
			\left\Vert \int_{\mathbb{R}^d} e^{i\eta\cdot x} \frac{(2^{-2j}+\vert\eta\vert^2)^{\frac{\alpha}{2}}}{(1+\vert \eta\vert^2)^{d}} \left( \chi(2^k\eta)-\chi(2^{k+1}\eta) \right)d\eta \right\Vert_{L^1(\mathbb{R}^d)} \leq \int_{\mathbb{R}^d}\frac{C2^{-\frac{k\alpha}{2}}}{1+\vert x\vert^{d+\frac{\alpha}{2}}}d\eta\leq C2^{-\frac{k\alpha}{2}}.
		\end{flalign*}
		Using the above results, finally we can obtain a uniform bound for the $L^1$-norm of $K_j$:
		\begin{flalign*}
			\Vert K_j\Vert_{L^1(\mathbb{R}^d)}&\leq \left\Vert\int_{\mathbb{R}^d} e^{i\eta\cdot x} \frac{(2^{-2j}+\vert\eta\vert^2)^{\frac{\alpha}{2}}}{(1+\vert \eta\vert^2)^{d}}\chi(2^{j-2}\eta) d\eta\right\Vert_{L^1(\mathbb{R}^d)}\\&+\sum_{k=0}^{j-3}\left\Vert\int_{\mathbb{R}^d} e^{i\eta\cdot x} \frac{(2^{-2j}+\vert\eta\vert^2)^{\frac{\alpha}{2}}}{(1+\vert \eta\vert^2)^{d}} \left( \chi(2^k\eta)-\chi(2^{k+1}\eta) \right)d\eta\right\Vert_{L^1(\mathbb{R}^d)} \\&+\left\Vert\int_{\mathbb{R}^d} e^{i\eta\cdot x} \frac{(2^{-2j}+\vert\eta\vert^2)^{\frac{\alpha}{2}}}{(1+\vert \eta\vert^2)^{d}} \left( 1-\chi(\eta) \right) d\eta\right\Vert_{L^1(\mathbb{R}^d)}\\
			&\leq C2^{-j\alpha}+C\sum_{k=0}^{j-3} 2^{-\frac{k\alpha}{2}} +C \leq C\sum_{k=0}^\infty 2^{-\frac{k\alpha}{2}}<\infty.
		\end{flalign*} 
	\end{proof}

	\bibliographystyle{siam}
	\bibliography{VP-low-regularity-2}

\begin{thebibliography}{10}

\bibitem{Ars}
{\sc A.~A. Arsenev}, {\em Existence in the large of a weak solution of
  {V}lasov's system of equations}, \v Z. Vy\v cisl. Mat i Mat. Fiz., 15 (1975),
  pp.~136--147, 276.

\bibitem{BGP}
{\sc F.~c. Bouchut, F.~c. Golse, and M.~Pulvirenti}, {\em Kinetic equations and
  asymptotic theory}, vol.~4 of Series in Applied Mathematics (Paris),
  Gauthier-Villars, \'Editions Scientifiques et M\'edicales Elsevier, Paris,
  2000.
\newblock Edited and with a foreword by Beno\^it Perthame and Laurent
  Desvillettes.

\bibitem{Bounded}
{\sc L.~Cesbron}, {\em Global well posedness of vlasov-poisson-type systems in
  bounded domians}, Analysis and PDE, 16 (2023), pp.~2465--2494.

\bibitem{Yudovich}
{\sc C.~S. Gianluca~Crippa, Marco~Inversi and G.~Stefani}, {\em Existence and
  stability of weak solutions of the vlasov-poisson system in localized
  yudovich spaces}, Nonlinearity, 37 (2024).

\bibitem{Glassey96}
{\sc R.~T. Glassey}, {\em The {C}auchy problem in kinetic theory}, Society for
  Industrial and Applied Mathematics (SIAM), Philadelphia, PA, 1996.

\bibitem{Grafakos}
{\sc L.~Grafakos}, {\em Modern Fourier Analysis}, Graduate Texts in
  Mathematics, Springer, 3rd~ed., 2010.

\bibitem{Horst}
{\sc E.~Horst}, {\em On the classical solutions of the initial value problem
  for the unmodified nonlinear {V}lasov equation. {I}. {G}eneral theory}, Math.
  Methods Appl. Sci., 3 (1981), pp.~229--248.

\bibitem{Horst2}
\leavevmode\vrule height 2pt depth -1.6pt width 23pt, {\em On the classical
  solutions of the initial value problem for the unmodified nonlinear {V}lasov
  equation. {II}. {S}pecial cases}, Math. Methods Appl. Sci., 4 (1982),
  pp.~19--32.

\bibitem{Convex}
{\sc H.~J. Hwang, J.~Jung, and J.~J.~L. Vel\'azquez}, {\em On global existence
  of classical solutions for the {V}lasov-{P}oisson system in convex bounded
  domains}, Discrete Contin. Dyn. Syst., 33 (2013), pp.~723--737.

\bibitem{Prev}
{\sc I.-J. Jeong and S.~Tae}, {\em Low regularity sobolev well-posedness for
  {V}lasov--{P}oisson}.

\bibitem{Besov}
{\sc C.~H. Jingchun~Chen}, {\em Vlasov-poisson equation in besov space},
  Taiwanese Journal of Mathematics, 1 (2022).

\bibitem{W.F.S._2}
\leavevmode\vrule height 2pt depth -1.6pt width 23pt, {\em Vlasov-poisson
  equations in {${H}^{s, p}(w)$} space}, Michigan Mathematical Journal, 73
  (2022).

\bibitem{W.F.S._1}
\leavevmode\vrule height 2pt depth -1.6pt width 23pt, {\em Vlasov-poisson
  equations in weighted sobolev space {${W}^{m, p}(w)$} space}, Cubo, 24
  (2022), pp.~211--226.

\bibitem{KP2}
{\sc T.~Kato and G.~Ponce}, {\em On nonstationary flows of viscous and ideal
  fluids in {$L^p_s({\bf R}^2)$}}, Duke Math. J., 55 (1987), pp.~487--499.

\bibitem{KP1}
\leavevmode\vrule height 2pt depth -1.6pt width 23pt, {\em Commutator estimates
  and the {E}uler and {N}avier-{S}tokes equations}, Comm. Pure Appl. Math., 41
  (1988), pp.~891--907.

\bibitem{Loeper}
{\sc G.~Loeper}, {\em Uniqueness of the solution to the {V}lasov-{P}oisson
  system with bounded density}, J. Math. Pures Appl. (9), 86 (2006),
  pp.~68--79.

\bibitem{Lagrangian}
{\sc A.~F. Luigi~Ambrosio, Maria~Colombo}, {\em On the lagrangian structure of
  transport equations: the vlasov-poisson system}, Duke Mathematical Journal,
  166 (2017).

\bibitem{Uniqueness}
{\sc E.~Miot}, {\em A uniqueness criterion for unbounded solutions to the
  vlasov–poisson system}, Communications in Mathematical Physics (Springer
  Berlin Heidelberg), 3 (2016).

\bibitem{C.P.}
{\sc C.~Pallard}, {\em Moment propagation of weak solutions to the
  vlasov-poisson system}, Communications in Partial Diferential Equations, 37
  (2012), pp.~1273--1285.

\bibitem{K.P.}
{\sc K.~Pfaffelmoser}, {\em Global classical solutions of the vlasov-poisson
  system in three dimensions for general initial data}, Journal of Differential
  Equations, 95 (1992), pp.~282--303.

\bibitem{P.L.}
{\sc P.L.LIons and B.~Perthame}, {\em Propagation of moments and regularity for
  the 3-dimenional vlasov-poisson system}, Invent.math., 105 (1991),
  pp.~415--430.

\bibitem{Rein07}
{\sc G.~Rein}, {\em Collisionless kinetic equations from astrophysics---the
  {V}lasov-{P}oisson system}, in Handbook of differential equations:
  evolutionary equations. {V}ol. {III}, Handb. Differ. Equ.,
  Elsevier/North-Holland, Amsterdam, 2007, pp.~383--476.

\bibitem{Avg}
{\sc Y.~R.J.Diperna, P.L.Lions}, {\em ${L}_p$ regularity of velocity averages},
  Annales De Li. H. P., Section C, 8 (1991), pp.~271--287.

\bibitem{Rudin}
{\sc W.~Rudin}, {\em Functional Analysis}, International Series in Pure and
  Applied Mathematics, McGraw-Hill, 2nd~ed., 1991.

\bibitem{J.S.}
{\sc J.~Schaeffer}, {\em Global existence of smooth solutions to the
  vlasov-poisson system in three dimensions}, Communications in Partial
  Differential Equations, 16 (2009), pp.~1313--1335.

\bibitem{BV}
{\sc J.~R.~R. Simon~Labrunie, Sandrine~Marchal}, {\em Local existence and
  uniqueness of the mild solution to the 1d vlasov-poisson system with an
  initial condition of bounded variation}, Mathematical Methods in The Applied
  Sciences, 33 (2010).

\end{thebibliography}

\end{document}